\newtheorem{mthm}{Theorem}
\newtheorem{thm}{Theorem}[section]
\newtheorem{cor}[thm]{Corollary}
\newtheorem{lem}[thm]{Lemma}
\newtheorem{prop}[thm]{Proposition}
\theoremstyle{definition}
\newtheorem{defn}[thm]{Definition}
\theoremstyle{remark}
\newtheorem{rem}[thm]{Remark}
\newtheorem{exa}[thm]{Example}
\DeclareMathOperator{\Ric}{Ric}
\DeclareMathOperator{\Scal}{Scal}
\DeclareMathOperator{\sign}{sign}
\DeclareMathOperator{\vol}{vol}
\begin{document}

\title{On toric Hermitian ALF gravitational instantons}

\author{Olivier Biquard \and Paul Gauduchon}
\address{Sorbonne Université and Université de Paris, CNRS, IMJ-PRG, F-75005 Paris}
\email{olivier.biquard@sorbonne-universite.fr}
\address{École Polytechnique, CNRS, CMLS, F-91120 Palaiseau}
\email{paul.gauduchon@polytechnique.edu}

%\date{\today}

\begin{abstract}
  We give a classification of toric, Hermitian, Ricci flat, ALF Riemannian metrics in dimension 4, including metrics with conical singularities. The only smooth examples are on one hand the hyperKähler ALF metrics, on the other hand, the Kerr, Taub-NUT and Chen-Teo metrics. There are examples with conical singularities with infinitely many distinct topologies. We provide explicit formulas.
\end{abstract}

\maketitle

More than ten years ago Chen and Teo discovered a new family of Riemannian AF gravitational instantons \cite{CheTeo11}, where AF (Asymptotically Flat) means that the end of the instanton is diffeomorphic to that of $\Bbb{R}^3\times S^1$. The family depends on one parameter (up to scale). Before that, only one family was known: the Kerr family (also a one parameter family up to scale), among which one finds the well-known Schwarzschild metric.

Our work started from the observation by Steffen Aksteiner and Lars Andersson \cite{AksAnd21} that the Chen-Teo instantons are `half algebraically special', or in more geometric terms that the selfdual Weyl tensor $W^+$ admits a double eigenvalue. This is equivalent to say that the metric is Hermitian, and more precisely is conformally Kähler. For a toric metric the problem can then be expressed in the setting of toric Kähler geometry, and the objects of interest are Bach flat extremal Kähler metrics with Poincaré behaviour at infinity, to be constructed from some given `moment polytope' in the real plane. This polytope completely characterizes the instanton up to scale, and we will calculate it for all classical examples.

This perspective gave some hope to construct other ALF gravitational instantons. ALF metrics are metrics which at infinity look locally like the product $\Bbb{R}^3\times S^1$ (see the precise definition in section \ref{sec:alf-metrics}). We give a complete classification of Hermitian, toric, Ricci flat, ALF metrics in terms of one real convex piecewise linear function on the real line. As a byproduct we obtain:

\begin{mthm}\label{thm:A}
  Suppose $(X^4,g)$ is a smooth Ricci flat, Hermitian, toric, ALF metric. If $g$ is not Kähler, then $g$ is a Kerr metric, a Chen-Teo metric, the Taub-NUT metric (with the orientation opposed to the hyperKähler orientation) or the Taub-bolt metric.
\end{mthm}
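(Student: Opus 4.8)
The plan is to derive Theorem~\ref{thm:A} from the general classification, which reduces every Hermitian, toric, Ricci flat, ALF metric to the datum of one convex piecewise linear function $f$ on $\mathbb{R}$, and then to isolate those $f$ producing a globally \emph{smooth}, \emph{non-Kähler} metric. First I would recall the conformal reduction underlying the whole analysis. Since $g$ is Hermitian, Ricci flat and not Kähler, its selfdual Weyl tensor $W^+$ has a repeated eigenvalue, so $g$ is conformally Kähler: one has $g=\varphi^{-2}\tilde g$ with $\tilde g$ Kähler and $\varphi$ a constant multiple of the scalar curvature of $\tilde g$. The Einstein (here Ricci flat) condition forces $\tilde g$ to be extremal and Bach flat, and the ALF end of $(X,g)$ corresponds to the conformal infinity $\{\varphi=0\}$, along which $\tilde g$ has Poincaré type behaviour. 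The non-Kähler hypothesis is exactly what keeps this reduction nondegenerate, since a Kähler Ricci flat ALF metric is one of the hyperKähler ALF metrics, for which $\varphi$ is constant. On the Kähler side the Abreu-Guillemin formalism encodes $\tilde g$ by a symplectic potential on a moment polytope $P$, and the body of the paper shows that the extremal, Bach flat and Ricci flat conditions, together with the ALF boundary behaviour, force $P$ and its potential to be completely determined by the single convex piecewise linear $f$.

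Second, I would translate global smoothness into arithmetic conditions on $f$. The edges of $P$ are the $2$-spheres on which a circle subgroup of the torus degenerates, its breakpoints are the isolated fixed points of the full torus, and a general $f$ yields only a metric with conical singularities along this skeleton --- this is the source of the infinitely many singular topologies of the abstract. Smoothness is the Delzant condition: at each vertex the primitive inward normals $n_i,n_{i+1}\in\mathbb{Z}^2$ of the two incident edges satisfy $\det(n_i,n_{i+1})=\pm1$, together with an analogous integrality requirement at the ALF end, which distinguishes the end of $\mathbb{R}^3\times S^1$ (the asymptotically flat case of Kerr and Chen-Teo) from a nontrivial circle bundle over $S^2$ (the genuinely ALF case of Taub-NUT and Taub-bolt). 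These become linear congruences relating the slopes of $f$ on consecutive intervals to the jumps at its breakpoints.

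Third comes the enumeration. I would argue that convexity of $f$, the positivity of $\varphi$ in the interior, the Delzant congruences at every breakpoint and the integrality of the asymptotic data together are so rigid that only finitely many combinatorial types survive, organised by the number of breakpoints and by the asymptotic type. With no finite edge one recovers Taub-NUT; with a single finite edge one recovers, according to the asymptotic type, the Taub-bolt or the Kerr family; and the maximal configuration compatible with smoothness produces the Chen-Teo family. Each case would then be identified with the corresponding classical metric by computing its piecewise linear function explicitly, as announced in the introduction, and verifying directly that the resulting metric is Ricci flat, complete, ALF and non-Kähler --- the last point forcing, for Taub-NUT, the orientation opposite to its hyperKähler one.

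I expect the exhaustiveness to be the main obstacle. One must show that the arithmetic conditions become genuinely \emph{over}-determined as soon as the number of breakpoints exceeds that of Chen-Teo, so that no further smooth example can arise, while the same conditions still admit the infinitely many conically singular models. The delicate point is that convexity and the Delzant relations interact through the asymptotic normalization, so the argument cannot be purely local at the vertices; a secondary difficulty is the explicit identification, namely producing the closed form of $f$ for each classical metric and matching the free parameter of each family with the one modulus left in $f$ after normalization.
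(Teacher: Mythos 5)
Your overall route is the same as the paper's: reduce to the convex piecewise linear function $f$ via the conformally Kähler structure and the Tod ansatz (theorem \ref{thm:classification}), translate smoothness into the Delzant condition on the edge normals, and enumerate the admissible $f$. But the decisive step is missing, and you flag it yourself: nothing in your proposal proves that the arithmetic conditions become over-determined beyond the Chen-Teo configuration. This is not a soft rigidity/counting statement; it is the key lemma of the paper's proof (lemma \ref{lem:n3}), with a specific short argument that your outline does not contain. At a breakpoint $z_j$ the Delzant condition gives a lattice relation $f'_{j-1}+\varepsilon_j f'_{j+1}=\ell_j f'_j$ with $\varepsilon_j=\pm1$, $\ell_j\in\Bbb{Z}$, and — crucially — pushing this relation through the formula (\ref{eq:87}) for $F_j-F_{j-1}$ forces $\varepsilon_j=+1$ together with the quantitative relation $f_{j+1}^2=f_j^2\,(f'_j-f'_{j-1})/(f'_{j+1}-f'_j)$. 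If three consecutive slopes satisfied $f'_{j-1}<f'_j<f'_{j+1}\leq0$, then on one hand $\ell_j=\frac{f'_{j-1}}{f'_j}+\frac{f'_{j+1}}{f'_j}>1$, hence $\ell_j\geq2$; on the other hand $f$ is decreasing there, so $f_{j+1}<f_j$, and the quantitative relation gives $f'_{j-1}+f'_{j+1}>2f'_j$, i.e.\ $\ell_j<2$ because $f'_j<0$: contradiction. Hence among any three consecutive slopes the first must be negative and the last positive, which bounds the number of slopes by four ($r\leq3$) and is exactly what makes the enumeration finite. Without this lemma (or a substitute) you cannot exclude smooth instantons with more breakpoints, so your third paragraph describes the expected answer rather than proving it.

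Two smaller points. First, the same relations drive the within-case analysis, which you also leave as an assertion: for $r=3$ they force $\ell_1=\ell_2=1$, $p+q=1$ and $f_1=f_2/\sqrt q$, $f_3=f_2/\sqrt p$, pinning down $f$ up to the homothety $f\mapsto a^{-1}f(az+b)$ and yielding exactly a one-parameter family; the identification with Chen-Teo then follows by uniqueness, with no need to verify Ricci flatness against the explicit metrics, contrary to what you suggest. Second, your ``integrality requirement at the ALF end'' is not how the paper works: the edge at infinity is allowed to be irrational (the Killing field $T$ need not have closed orbits), no Delzant condition is imposed there, and the AF versus ALF dichotomy among Kerr/Chen-Teo and Taub-NUT/Taub-bolt is an output of the classification (for instance $v_3=-v_0$ is derived in the $r=3$ case), not an input hypothesis.
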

The Kähler examples are actually hyperKähler gravitational instantons and are well-known: one recovers only multi-Taub-NUT metrics. The Taub-NUT metric itself is special since it is hyperKähler for one orientation, and conformally Kähler for the other orientation, hence we have put it in the list of the theorem.

There is a curious duality between our classification and that of compact Hermitian Einstein metrics with positive Ricci which are not Kähler, see \cite{Leb97,CheLebWeb08}: these are known to be $\Bbb{C}P^2$ (the Fubini-Study metric with the reverse orientation), its blow up at 1 point (the Page metric) or 2 points (the Chen-LeBrun-Weber metric). The analogy with the Taub-NUT, Kerr and Chen-Teo metrics is obvious, and is confirmed by the fact that their polytopes have the same number of edges. But we have not found any direct relation between both classifications. The Taub-bolt instanton does not fit in this correspondence but can be considered as a degeneration of Chen-Teo instantons.

Of course theorem A leaves open the fascinating question of the classification of all gravitational instantons, or at least of Hermitian gravitational instantons, see \cite[conjecture 1]{AksAnd21}. Only the classification in the hyperKähler case is now known after the efforts of lots of mathematicians \cite{Kro89b,Min11,ChenChen15,ChenChen19,ChenChen21,SunZhang21}.

Theorem \ref{thm:A} is a classification theorem which does not give new examples of ALF instantons (nevertheless it gives an alternative construction of the Chen-Teo instantons). But we have a general classification result (theorem \ref{thm:classification}) in terms of convex piecewise linear functions, which also allows to understand metrics with conical singularities. These singularities occur around 2-spheres (divisors from the complex viewpoint) which are fixed by the torus action. Our results are quite different from the recent classification results in \cite{KunLuc21}: on one hand our techniques are not limited to the AF case, on the other hand by restricting to the Hermitian case we obtain a full description in the sense that we can understand all geometric properties (smoothness, cone angles, etc.) from our piecewise linear functions. In particular we obtain:

\begin{mthm}\label{thm:B}
  Suppose $(X^4,g)$ is a Hermitian, toric, Ricci flat, ALF instanton with conical singularities of cone angles $2\pi\alpha_i$ around the fixed divisors $D_i$. Let $x=D_j\cap D_{j'}$ be a fixed point of the action. Then for each $\alpha<\alpha_j+\alpha_{j'}$close to $\alpha_j+\alpha_{j'}$, the blowup of $X$ at the point $x$ admits a Hermitian, toric, Ricci flat, ALF instanton, with the same conical angles around the $D_i$'s, and a conical singularity of angle $2\pi\alpha$ around the additional divisor.
\end{mthm}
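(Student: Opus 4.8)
The plan is to deduce Theorem~\ref{thm:B} from the classification result Theorem~\ref{thm:classification}, by realizing the blowup of $X$ at $x$ as an explicit, localized modification of the convex piecewise linear data that classifies the instanton. First I would translate the given instanton into the language of Theorem~\ref{thm:classification}: $X$ is encoded by a convex piecewise linear function on $\mathbb{R}$, equivalently by its moment polytope, whose edges are the fixed divisors $D_i$ and whose vertices are the fixed points of the torus action. Under this dictionary the fixed point $x=D_j\cap D_{j'}$ is the vertex at which the two adjacent edges meet, and the cone angle $2\pi\alpha_i$ around $D_i$ is read off from the local data attached to the $i$-th edge (the scaling of its normal, equivalently the jump of the derivative of the piecewise linear function).

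Next I would describe the blowup combinatorially. Blowing up a toric surface at a torus-fixed point is the operation of cutting the corner: the vertex $x$ is replaced by a new edge $E$, the exceptional divisor, carrying a normal direction intermediate between those of $D_j$ and $D_{j'}$. On the level of the piecewise linear function this inserts one new linear piece between the $j$-th and the $j'$-th pieces. I would parametrize the inserted piece by the cone angle $\alpha$ that it is to carry, and observe that the convexity required by Theorem~\ref{thm:classification} forces the slope of the new piece to be strictly intermediate; the degenerate case, in which the new edge has zero length and the polytope collapses back to that of $X$, should occur precisely when $\alpha=\alpha_j+\alpha_{j'}$. Hence for $\alpha$ slightly below $\alpha_j+\alpha_{j'}$ the modification produces a genuine, short new edge.

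Finally I would run the construction of Theorem~\ref{thm:classification} on the modified data. Because the corner cut is localized near $x$ and leaves the behaviour at infinity untouched, the ALF end is unchanged; because the new edge is short, the convexity and positivity conditions of the classification, being open conditions, survive the perturbation, and one obtains a Hermitian, toric, Ricci flat, ALF instanton on the blowup $\hat X$. The cone angle around each original $D_i$ is intrinsic to that edge and is unaffected by a corner cut performed away from it, so the $\alpha_i$ are preserved, while the new edge $E$ carries the prescribed cone angle $2\pi\alpha$ by construction.

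The main obstacle I expect lies in the bookkeeping of the last two steps. One must verify, from the explicit formulas of Theorem~\ref{thm:classification}, that the intermediate normal of $E$ yields \emph{exactly} the cone angle $2\pi\alpha$, that $\alpha=\alpha_j+\alpha_{j'}$ is indeed the contraction threshold (so that $\alpha<\alpha_j+\alpha_{j'}$ is the correct range and the hypothesis ``close to'' is what guarantees that no other edge of the polytope is disturbed by the insertion), and that the \emph{global} convexity and positivity conditions, not merely their local counterparts near $x$, continue to hold for the modified function. It is precisely the interplay between the local corner cut and these global constraints, together with the identification of the threshold value $\alpha_j+\alpha_{j'}$, that forms the technical heart of the argument.
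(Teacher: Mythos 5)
Your overall strategy is the one the paper follows: encode $X$ by its convex piecewise linear function $f$, realize the blowup as the insertion of one new linear piece between those corresponding to $D_j$ and $D_{j'}$, determine the data of the new piece from the Delzant condition, and identify $\alpha=\alpha_j+\alpha_{j'}$ as the threshold at which the inserted piece degenerates to length zero. But your description of the insertion as a \emph{localized} corner cut (``the corner cut is localized near $x$'', ``performed away from'' the other edges) is exactly the step that fails, and the paper has to work around it explicitly. Here is the concrete obstruction: once $\alpha$ is chosen, the Delzant condition at the two new vertices determines the slope $p$ and the constant $F$ of the new piece by (\ref{eq:117}), and compatibility with (\ref{eq:87}) then forces the values $f_\pm$ of the function at the two endpoints of the new segment, by (\ref{eq:118})--(\ref{eq:119}). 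For $\alpha<\alpha_j+\alpha_{j'}$ close to the threshold one does get $f_\pm=f(z_\pm)$ for points $z_\pm$ on the two old segments adjacent to the vertex; but truncating the original graph along the chord from $(z_-,f_-)$ to $(z_+,f_+)$ would in addition require $f_+-f_-=p\,(z_+-z_-)$, and this identity is not implied by the constraints and in general fails. The data for a literal corner cut is over-determined: no convex piecewise linear function can agree with $f$ outside a neighbourhood of the vertex and carry the required new edge.

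The paper's resolution is to abandon locality: one keeps only the abstract combinatorial data (the augmented list of slopes $p_0<\dots<p_j<p<p_{j+1}<\dots<p_r$, the constants $F_i$ together with $F$, the angles $\alpha_i$ together with $\alpha$, and the prescribed values at the kinks), checks its coherence --- by (\ref{eq:120}) the sign of each difference of consecutive values equals the sign of the corresponding slope --- and then uses the fact that a convex piecewise linear function $f_\alpha$ with prescribed slopes and prescribed kink values exists and is unique up to translation in $z$. This $f_\alpha$ is a \emph{global} deformation of $f$: all kink positions move, and the metric, rebuilt from $f_\alpha$ via (\ref{eq:71}) and (\ref{eq:41}), changes everywhere, not only near the blown-up point. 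Accordingly, your justifications that ``the ALF end is unchanged'' and that the angles around the old $D_i$'s are preserved ``because the cut is performed away from them'' must be replaced: in the paper these facts hold because $f_\alpha$ still has slopes $\pm1$ at infinity and satisfies the positivity condition, so theorem \ref{thm:classification} applies, and because the normals and angles attached to the old edges are literally the same data, so proposition \ref{prop:regularity} and the Delzant conditions remain satisfied by construction. With this correction your outline becomes the paper's argument; as stated, the mechanism you propose does not produce a well-defined function.
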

This theorem implies that by successive blowups one can construct ALF instantons with conical singularities on an infinite family of topologically distinct manifolds. Of course it follows from the condition on $\alpha$ that we need larger and larger angles.

As an illustration of the theorem, one can start from a Kerr metric and blow up a point: one obtains a family of metrics with cone angle slightly less than $4\pi$ on the exceptional divisor. It turns out that one can decrease the angle to the angle $2\pi$ to create a smooth instanton which is a Chen-Teo instanton. These metrics fit in the larger 5-parameter family of Chen and Teo in \cite{CheTeo15}, but we have not studied them systematically, see nevertheless remark \ref{rem:full-CT}.

It can be seen that the geometric process for this blow up, for example in the case $\alpha_j=\alpha_{j'}=1$, consists in gluing a ramified double cover of the Eguchi-Hanson metric on $T^*S^2$, hence with cone angle $4\pi$ around $S^2$.

% \begin{mthm}
%   There exist an infinite sequence $(X_i^4,g_i)$ of Ricci flat, Hermitian, toric, ALF metrics with conical singularities, such that all $X_i$ are topologically distinct.
% \end{mthm}
% The examples in the theorem are obtained by blowing up the fixed points of the torus action.

\smallskip Since the problem of constructing toric Hermitian gravitational instantons can be reinterpreted in terms of Kähler metrics, we are in the general setting of the Yau-Tian-Donaldson conjecture on extremal Kähler metrics. This is solved in the toric case \cite{Don09,ChenCheng21a,ChenCheng21b,He19}, but the resolution does not seem to cover exactly our case because of the Poincaré behaviour at infinity. Even with solutions, it is a hard problem to determine when the extremal metric is Bach flat. But we avoid this problem, because the Ricci flat equation (becoming the Bach flat equation on the Kähler metric) is so strong that an ansatz exists for the solutions in terms of an axisymmetric harmonic function on $\Bbb{R}^3$. This was discovered recently by Paul Tod \cite{Tod20}, and we give another derivation of his ansatz in this paper. 

Applying the ansatz to our possible polytopes gives that all possible harmonic functions are classified in terms of a single convex piecewise linear real function on the real axis. We can then read the smoothness of the solution in terms of the usual theory of toric Kähler metrics (`Delzant polytopes') and proceed to a complete classification. 

\subsubsection*{Overview of the paper} In section \ref{sec:alf-metrics} we give a general definition of ALF metrics and introduce the Hermitian condition; in particular we find that a possible complex structure has only one possible behaviour at infinity (proposition \ref{prop:twistor-2-forms}). After recalling the toric machinery in section \ref{sec:toric-kahl-geom}, we give our derivation of the Tod ansatz in section \ref{sec:ansatz}. From the behaviour of the Kähler metric at the boundary of the polytope, we deduce in sections \ref{sec:boundary-conditions} and \ref{sec:the-solutions} that the generating function $U$ of Tod, which is an axisymmetric harmonic function on $\Bbb{R}^3$, is generated by a convex piecewise linear positive function on the real line (it is actually the Dirac density of $\Delta U$ along the real axis). We then study the metric just from the data of this convex piecewise linear function: the polytope is found in section \ref{sec:moment-map}, the regularity is studied in section \ref{sec:regularity} (theorem \ref{thm:classification}), where we also describe the classical examples from this point of view: these are the Kerr-Taub-bolt metrics \cite{GibPer80}, they all correspond to instantons with conical singularities, including on the divisors going to infinity. These ideas are applied to prove theorem \ref{thm:A} in section \ref{sec:class-smooth-herm}, which contains also a calculation of the polytopes of the Chen-Teo instantons. Section \ref{sec:blowing-up-cone} contains the construction of conical instantons by blowup, leading to theorem \ref{thm:B}.

\subsubsection*{Acknowledgements} The first author thanks Steffen Aksteiner and Lars Andersson for generously sharing their ideas on the Chen-Teo instantons, during a stay at the Institut Mittag-Leffler in Djursholm during the fall of 2019.

\section{ALF metrics and complex structures}
\label{sec:alf-metrics}

\begin{defn}
  A Riemannian manifold $(M^4,g)$ is ALF if
  \begin{itemize}
  \item it has one end diffeomorphic to $(A,+\infty)\times L$, where $L$ is $S^1\times S^2$, $S^3$, or a finite quotient;
  \item there is a triple $(\eta,T,\gamma)$ defined on $L$, where $\eta$ is a 1-form, $T$ a vector field such that $\eta(T)=1$ and $T\lrcorner d\eta=0$, and $\gamma$ is a $T$-invariant metric on the distribution $\ker \eta$;
  \item the transverse metric $\gamma$ has constant curvature $+1$;
  \item the metric $g$ has the behaviour
    \begin{equation}
     g = dr^2 + r^2 \gamma + \eta^2 + h, \text{ with } |\nabla^kh|=O(r^{-1-k}),\label{eq:1}
   \end{equation}
    where $\gamma$ is extended to the whole $TL$ be deciding that $\ker \gamma$ is generated by $T$, and the covariant derivative $\nabla$ and the norm are with respect to the asymptotic model $dr^2+r^2 \gamma+\eta^2$.
  \end{itemize}
\end{defn}
The AF case is when $L=S^2\times S^1$, so that the end has the topology of $(\Bbb{R}^3\setminus B(R))\times S^1$ for some large $R$.

The hypothesis tells us that locally near a point of $L$ the metric is $dr^2+r^2g_{S^2}+dt^2+O(r^{-1})$, in particular the curvature is $O(r^{-3})$. If we allow conical singularities up to infinity, then we must generalize the definition to admit transverse metrics $\gamma$ with conical singularities as well.

The volume growth is $r^3$, which corresponds to the fact that the direction of the vector field $T$ collapses at infinity. The definition is designed so that the orbits of $T$ do not need be closed. This does not occur in the hyperKähler case, but is the general case for purely Riemannian Ricci flat metrics (for example the Kerr metrics).

We now suppose that the metric is toric, that is there is an action of a 2-torus $\Bbb{T}^2$ on $(M^4,g)$ which preserves $g$. There is a compatibility condition between the ALF metric and the action:
\begin{defn}
  A Riemannian manifold $(M^4,g)$ is toric ALF if it is ALF, and the torus action near the end of $M$ is an action on $L$ which preserves $(\eta,T,\gamma)$, and such that $T$ is generated by the infinitesimal action of $\Bbb{T}^2$.
\end{defn}

We now introduce the Hermitian condition:
\begin{thm}[{\cite[Prop. 5]{Der83}}]\label{th:derdzinski}
  Let $(M^4,g)$ be an oriented Einstein manifold, whose selfdual Weyl tensor $W^+$, viewed as a trace-free symmetric endomorphism of $\Lambda^+M$, has a simple nonzero eigenvalue $\lambda$ and a double eigenvalue $-\frac \lambda2$.
  Denote $J$ the almost complex structure such that the selfdual 2-form $\omega=g(J\cdot,\cdot)$ is an eigenform of $W^+$ for the eigenvalue $\lambda$. Then $(g_K:=\lambda^{\frac23}g, J, \omega_K=\lambda^{\frac23}\omega)$ is Kähler. In particular $J$ is a complex structure on $(M,g)$.
\end{thm}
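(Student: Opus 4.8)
The plan is to pin down the right conformal factor by pointwise linear algebra and then verify, using the Einstein condition through the divergence of $W^+$, that this factor makes $\omega$ parallel.

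First I would reduce the statement to a covariant-constancy equation. Since $\lambda$ is simple and can never coincide with the double eigenvalue $-\tfrac\lambda2$ (that would force $\lambda=0$), the $\lambda$-eigenline is a smooth real line subbundle of $\Lambda^+M$; normalizing its generator so that $|\omega|^2=2$ determines $\omega$ up to sign, and hence $J$ through $\omega=g(J\cdot,\cdot)$. Because the Levi-Civita connection preserves the decomposition $\Lambda^\pm$ and $|\omega|^2$ is constant, $\nabla\omega$ is a section of $\Lambda^1\otimes\Lambda^+$ pointwise orthogonal to $\omega$, i.e. valued in the rank-$2$ complement $\omega^\perp\subset\Lambda^+$. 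Proving the theorem amounts to showing $\nabla^{g_K}\omega_K=0$, since a parallel nondegenerate self-dual $2$-form is closed with $J$ integrable, so that $(g_K,J,\omega_K)$ is Kähler. Writing $g_K=e^{2f}g$, $\omega_K=e^{2f}\omega$ with $f=\tfrac13\log\lambda$ and applying the conformal change of the Levi-Civita connection, $\nabla^{g_K}\omega_K=0$ becomes the purely algebraic identity
\begin{equation*}
\nabla_c\omega_{ab}=f_a\,\omega_{cb}+f_b\,\omega_{ac}-g_{ca}\,f^d\omega_{db}-g_{cb}\,f^d\omega_{ad},\qquad f=\tfrac13\log\lambda,\tag{$\dagger$}
\end{equation*}
where $f_a=\partial_a f$ and $f^d=g^{de}f_e$. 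It therefore suffices to establish $(\dagger)$.

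The only inputs are two facts. On the one hand, for an oriented Einstein $4$-manifold the contracted second Bianchi identity gives $\delta W=0$, which in dimension $4$ refines to $\delta W^+=0$. On the other hand, since the eigenvalues of $W^+$ are $\lambda,-\tfrac\lambda2,-\tfrac\lambda2$ with top eigenspace spanned by $\omega$, as a symmetric endomorphism of $\Lambda^+$ one has the normal form
\[
W^+=\tfrac{3\lambda}{4}\,\omega\otimes\omega-\tfrac\lambda2\,\mathrm{Id}_{\Lambda^+},
\]
where $(\omega\otimes\omega)(\phi)=\langle\omega,\phi\rangle\,\omega$. I would then substitute this normal form into $\delta W^+=0$, expand with the Leibniz rule using $\nabla\,\mathrm{Id}_{\Lambda^+}=0$ and $d|\omega|^2=0$, and obtain a first-order linear relation among $d\lambda$, $\omega$ and $\nabla\omega$ that is forced to vanish. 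Decomposing this relation along $\omega$ and along $\omega^\perp$ inside $\Lambda^+$ (using that multiplication by $\omega$ rotates $\omega^\perp$ through the $\mathfrak{su}(2)$-algebra of $\Lambda^+$), the component along $\omega$ is automatic while the $\omega^\perp$-components express $\nabla\omega$ through $d\lambda$; the coefficients $\tfrac{3\lambda}{4}$ and $-\tfrac\lambda2$ are precisely such that this is $(\dagger)$ with $f=\tfrac13\log\lambda$. In particular the exponent $\tfrac23$ in $g_K=\lambda^{2/3}g$ is forced by demanding that the $d\lambda$-terms produced by $\delta W^+=0$ match the $df$-terms in the conformal change of connection.

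The substantive difficulty is this last computation: evaluating $\delta W^+$ of the rank-one-plus-scalar endomorphism and projecting the resulting identity onto the $\Lambda^+$-components. The bookkeeping is delicate because $\nabla\omega$ takes values in $\omega^\perp$ and couples to $\omega$ through the cross-product/quaternionic structure of $\Lambda^+\cong\mathfrak{su}(2)$, and one must genuinely use $\delta W^+=0$ and not merely $\delta W=0$. Once the projection is organized, matching coefficients to recover $(\dagger)$, together with the exponent $\tfrac23$, is routine.
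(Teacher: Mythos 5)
The paper does not prove this theorem at all: it is quoted from Derdzi\'nski \cite[Prop.~5]{Der83}, and the only internal hint of an argument is the remark that follows it, reformulating the hypothesis as the existence of a twistor $2$-form. So the comparison is really with the classical argument behind that citation, and your outline \emph{is} that argument; its checkable steps are correct. The reduction of the Kähler property to your identity $(\dagger)$ is the right conformal-change computation; the normal form $W^+=\tfrac{3\lambda}{4}\,\omega\otimes\omega-\tfrac{\lambda}{2}\,\mathrm{Id}_{\Lambda^+}$ correctly encodes the spectral hypothesis; $\delta W^+=0$ does follow from the Einstein condition via the second Bianchi identity and the parallelism of the Hodge star; and your division of labour is sound: $\nabla\omega$ has $8$ independent components (it is valued in $\Lambda^1\otimes(\omega^\perp\cap\Lambda^+)$), $\delta W^+$ has $8$ independent components (the trace identity $g^{bc}\nabla^aW^+_{abcd}=0$, i.e.\ trace-freeness of $W^+$, kills $4$ of the $12$), and that same trace identity is precisely why the component of $\delta W^+=0$ along $\omega$ is, as you say, automatic once the $\omega^\perp$-components are imposed, since $\omega$ is invertible. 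The one shortfall is that the crux --- expanding $\delta W^+=0$, inverting the contraction with $\omega$ (i.e.\ with $J$), and checking that the result is exactly $(\dagger)$ with $f=\tfrac13\log|\lambda|$ (note the absolute value: $\lambda$ may be negative) --- is asserted rather than carried out; the assertion is true, but it is the entire analytic content of the theorem, so a complete write-up must include it, together with its trace, the Lee-form relation expressing $\delta\omega$ in terms of $J\,df$. By contrast, the route the paper gestures at after the statement packages this same computation more modularly: the degeneracy of $W^+$ plus $\delta W^+=0$ is equivalent to $\tau=\lambda^{-1/3}\omega$ solving the twistor equation (\ref{eq:2}), and the conformal covariance $f^{-3}\mathcal{T}^{f^2g}(f^3\tau)=\mathcal{T}^g(\tau)$ with $f=\lambda^{1/3}$ then exhibits $\omega_K$ as a twistor $2$-form of constant norm for $g_K$, hence parallel. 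Your approach is more self-contained; the twistor route trades your projection bookkeeping for two quotable lemmas.
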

The condition on the Weyl tensor is known as half algebraically special, or half type D. We choose the orientation of $M$ so that $W^+$ (rather than $W^-$) is degenerate.

Another equivalent way to state the same condition on $W^+$ is to say that $(M^4,g)$ admits a twistor 2-form, that is a selfdual 2-form $\tau$ satisfying the equation (also known as the Killing-Yano equation)
\begin{equation}
 \mathcal{T}^g(\tau)_X:=\nabla_X\tau - \frac13 ( X \lrcorner d\tau - X \wedge \delta\tau ) = 0\label{eq:2}
\end{equation}
for all vector fields $X$, see \cite{Pon92} and Lemma 2 in \cite{ApoCalGau03}. 

One has $f^{-3}\mathcal{T}^{f^2g}(f^3\tau)=\mathcal{T}^g(\tau)$, and it follows that if $(M^4,g)$ is half algebraically special then $\tau=\lambda^{-\frac13}\omega=\lambda^{-1}\omega_K$ is a twistor 2-form for $g$ (and the converse is true).

\begin{exa}
  In $\Bbb{R}^3\times S^1$ we have the twistor 2-form $\tau=r(-dr\wedge dt+r^2 \vol_{S^2})$.
\end{exa}
The next proposition proves that this is the only possible asymptotic behaviour for non hyperKähler toric ALF instantons:
\begin{prop}\label{prop:twistor-2-forms}
  Let $(M^4,g)$ be a toric Ricci flat ALF manifold, and $\tau$ a twistor 2-form. Suppose $W^+$ is nonzero. Then up to a nonzero multiplicative constant, $\tau$ has the asymptotic behaviour
  \begin{equation}
 \tau = r(-dr\wedge \eta + r^2 \vol_\gamma) + O(1).\label{eq:3}
\end{equation}
\end{prop}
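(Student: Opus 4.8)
The plan is to use the twistor 2-form equation \eqref{eq:2} together with the ALF asymptotic structure to pin down the leading behaviour of $\tau$ at infinity. Since $\tau$ is a selfdual 2-form, at each point it is determined by its decomposition with respect to the asymptotic orthonormal coframe adapted to the model $dr^2 + r^2\gamma + \eta^2$; on the asymptotic model the space of selfdual forms is three-dimensional, spanned by the standard selfdual basis built from $dr\wedge\eta$, $\vol_\gamma$, and the two remaining mixed terms. First I would expand $\tau$ in this basis with coefficients that are functions on the end, and plug the expansion into the Killing-Yano equation using the covariant derivative $\nabla$ of the asymptotic model. Because the model metric is an explicit warped product, the Christoffel symbols and the curvature are completely computable, and the leading-order part of equation \eqref{eq:2} becomes a linear ODE/PDE system for the coefficient functions of $\tau$.

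The key point to exploit is the toric symmetry. The torus action preserves $(\eta,T,\gamma)$, and one can average $\tau$ over the torus to assume it is torus-invariant, so its coefficients are functions of $r$ alone (and possibly of the transverse point, but invariance kills the transverse dependence along the collapsing $T$-direction). Next I would observe that a twistor 2-form, by the correspondence recalled just before the statement, is equivalent via $\tau=\lambda^{-1}\omega_K$ to the selfdual eigenform of $W^+$ for the simple eigenvalue $\lambda$, so $\tau$ is determined up to scale by $W^+$. Since $W^+\neq 0$ and the metric is Ricci flat ALF, the curvature decays like $O(r^{-3})$; the eigenvalue $\lambda$ then has a definite decay rate, and the normalization $\tau=\lambda^{-1}\omega_K$ fixes the overall growth of $\tau$. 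Matching this with the leading solution of the linearized Killing-Yano system should force the coefficient of $dr\wedge\eta$ and of $\vol_\gamma$ to have the stated form $-r\,dr\wedge\eta + r^3\vol_\gamma$, with the transverse selfdual components being lower order.

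Concretely, I expect the linearized equation to decouple into a small number of scalar ODEs in $r$ whose solutions are powers of $r$; the boundary/growth condition coming from $W^+\neq 0$ together with the ALF decay $|\nabla^k h|=O(r^{-1-k})$ selects the unique growing solution matching \eqref{eq:3}, while the spurious solutions are excluded either because they grow too fast (incompatible with $W^+$ having the right decay) or because they correspond to $W^+=0$ (the hyperKähler or flat case, excluded by hypothesis). The $O(1)$ error in \eqref{eq:3} then comes from feeding the perturbation $h$ of size $O(r^{-1})$ back into the equation and estimating the induced correction to $\tau$.

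The main obstacle will be controlling the coupling between the radial behaviour and the transverse geometry on $L$: because the orbits of $T$ need not be closed, one cannot simply separate variables on a circle, and one must argue that torus-invariance (rather than mere $T$-invariance) is enough to reduce to $r$-dependence, and that the constant-curvature condition on $\gamma$ rigidifies the transverse part so that the only selfdual invariant 2-forms built from the model are the expected ones. Handling the error term rigorously — showing that the $O(r^{-1})$ perturbation $h$ of the metric produces only an $O(1)$ perturbation of $\tau$ after the correct normalization, rather than feeding back to alter the leading coefficients — is the delicate part, and will likely require a careful mapping-property/decay analysis of the linearized twistor operator $\mathcal{T}^g$ on the ALF end.
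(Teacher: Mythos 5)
Your strategy coincides with the paper's: expand an invariant selfdual 2-form in the model basis on $\Bbb{R}^3\times S^1$, reduce the Killing--Yano equation to an explicit linear system, compare the true solution with model solutions by a perturbation argument, and exclude the spurious solution using $W^+\neq0$ together with the curvature decay $|W|=O(r^{-3})$. In outline this is exactly the paper's proof.

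There is, however, one concrete error in the plan, which you partly flag as an ``obstacle'' but propose to resolve in the wrong direction: torus-invariance does \emph{not} reduce the coefficients to functions of $r$ alone. The torus kills the $\phi$- and $t$-dependence but not the polar angle $\theta$ of the transverse sphere, so invariant coefficients are functions of $(r,\theta)$, and the model solution space is two-dimensional: besides $\tau_1=r\omega_1$ (with $\omega_1=-dr\wedge dt+r^2\vol_{S^2}$) there is the parallel, bounded solution $\tau_2=\cos\theta\,\omega_1+\sin\theta\,\omega_3$, which has genuine $\theta$-dependence. If you restrict to $r$-dependent coefficients you find only $\tau_1$, and the comparison step (``a solution of the perturbed system is asymptotic to \emph{some} model solution'') is then run on an incomplete solution space, so the argument breaks exactly where it matters: $\tau_2$ is precisely the solution that must first be found and then excluded. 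Your exclusion logic is the right one for this purpose: if $\tau$ were asymptotic to $\tau_2$ it would be bounded, hence $\lambda^{-1}=O(1)$ for the simple eigenvalue $\lambda$ of $W^+$, while $|W|=O(r^{-3})$ forces $\lambda\to0$, giving $W^+\equiv0$, a contradiction. Two further points: the deferred ``mapping-property/decay analysis'' has a short resolution in the paper --- the twistor operator is overdetermined, so the invariant system contains radial ODEs ($\partial_r a_i$ is determined algebraically by the $a_i$), whence solutions for $g=g_0+O(r^{-1})$ track model solutions up to relative $O(r^{-1})$ errors, giving $\tau=c\tau_1+O(1)$ directly; and averaging $\tau$ over the torus could in principle kill it --- it is cleaner to note that where $W^+\neq0$ a twistor form spans the canonical eigenline of $W^+$, so $\tau$ is automatically invariant under the connected group $\Bbb{T}^2$.
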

Observe that this formula implies that we have chosen the sign of $\eta$ so that an oriented coframe is obtained from the forms $(-dr,\eta)$ and from an oriented coframe of $\ker \eta$.
\begin{proof}
  We begin by solving the twistor equation for $\Bbb{T}^2$-invariant selfdual 2-forms on $\Bbb{R}^3\times S^1$. We take radial coordinates $(r,\theta,\phi)$ on $\Bbb{R}^3$ and an angular coordinate $t$ on $S^1$, so the metric of $\Bbb{R}^3\times S^1$ is
  \begin{equation}
    \label{eq:4}
    g_0 = dr^2 + r^2 (d\theta^2 + \sin^2\theta d\phi^2) + dt^2.
  \end{equation}
  The torus is given by the angular variables $(\phi,t)$. We introduce the orthonormal coframe $\theta_1=dr$, $\theta_2=r d\theta$, $\theta_3=r \sin \theta d\phi$, $\theta_4=dt$. These are invariant under the torus action. We also use the basis of selfdual forms (hidden here is a choice of orientation of $\Bbb{R}^3\times S^1$):
  \begin{equation}
    \label{eq:5}
       \omega_1 = - \theta_1 \wedge \theta_4 + \theta_2 \wedge \theta_3, \quad \omega_2 = \theta_1 \wedge \theta_2 - \theta_3 \wedge \theta_4, \quad \omega_3 = \theta_1 \wedge \theta_3 + \theta_2 \wedge \theta_4.
     \end{equation}
     We take an invariant selfdual form $\tau=\sum_1^3 a_i(r,\theta) \omega_i$. The reader can check that the twistor equation (\ref{eq:2}) is given by the system of the following 8 equations:
     \begin{equation}
       \label{eq:6}
       \begin{split}
         r(a_1)_r &= a_1 - \cot(\theta)a_3 = a_1 - (a_3)_\theta \\
         r(a_2)_r &= a_2 = 0 \\
         r(a_3)_r &= a_3+(a_1)_\theta = 0 \\
         (a_2)_\theta &= \cot(\theta)a_2 = 0
       \end{split}
     \end{equation}
     It is immediate that the solutions are $a_1=k_1 \cos(\theta)+k_2r$, $a_2=0$, $a_3=k_1\sin(\theta)$ for some constants $k_1$ and $k_2$. So we have two solutions: $\tau_1=r\omega_1$ and $\tau_2=\cos(\theta)\omega_1+\sin(\theta)\omega_3$. Actually $\tau_2=- d(r\cos\theta)\wedge \theta_4 + d(r\sin\theta)\wedge \theta_3$ is parallel, since $r\cos(\theta)$ is the coordinate on the invariant axis of $\Bbb{R}^3$ and $d(r\sin \theta)\wedge\theta_3$ is the volume form in the orthogonal plane. In particular it is bounded, and we have proved (\ref{eq:3}) for the flat model $\Bbb{R}^3\times S^1$.

     The case of a general ALF metric can be deduced: starting from the behaviour (\ref{eq:1}), we write locally $\eta=dt+\alpha$ where $dt(T)=1$ and $\alpha$ is a 1-form on the local quotient by the action of $T$. Then with respect to $g$ one has $\alpha=O(r^{-1})$. Taking also local spherical coordinates $(\theta,\phi)$ on this local quotient, such that $\partial_\theta$ is a second generator of $\Bbb{t}^2$ (the first being $T$), we can write locally the ALF metric as $g=g_0+O(r^{-1})$, where $g_0$ is our flat metric (\ref{eq:4}) on $\Bbb{R}^3\times S^1$. The twistor equation for $g$ is the same as for $g_0$ with $O(r^{-1})$ perturbation, but more importantly the twistor operator $\mathcal{T}$ has the same symbol: this implies that the system (\ref{eq:6}) includes a set of ODE's for $(a_1,a_2,a_3)$, that is $(\partial_r a_1,\partial_r a_2,\partial_r a_3)$ is completely determined in terms of $(a_1,a_2,a_3)$ (without derivative). It follows that a solution $\tau$ for $g$ is asymptotic to a solution for $g_0$ up to $O(r^{-1})$ terms. In particular, since $|\tau_1|=r$,  for some constant $c$ one has
     \begin{equation}
       \label{eq:7}
       \tau = c \tau_1 + O(1).
     \end{equation}
There remains to prove that $c\neq0$. If $c=0$, then we get instead $\tau = c' \tau_2 + O(r^{-1})$. As we have seen, up to a constant, we would then have that $\tau=\lambda^{-\frac13} \omega$ with $|\omega|^2=2$ an eigenform of $W^+$ for the eigenvalue $\lambda$. Since by (\ref{eq:7}) we have $|\tau|=O(1)$ we would obtain $\lambda^{-1}=O(1)$. By (\ref{eq:1}) we have $|W|=O(r^{-3})$ so this is impossible, except if $\lambda\equiv0$ that is if $W^+\equiv0$.
\end{proof}

\section{Toric Kähler geometry}
\label{sec:toric-kahl-geom}

We start from a toric Ricci flat ALF manifold $(M,g)$ which is Hermitian, that is satisfies the condition of theorem \ref{th:derdzinski}. We therefore have a complex structure $J$ and a conformal metric $g_K=\lambda^{\frac23}g$ which is Kähler. Moreover, from Proposition \ref{prop:twistor-2-forms} we deduce that for some constant $k\neq0$ we have $\lambda=\frac k{r^3}+O(\frac1{r^4})$. We change the conformal factor $\lambda^{\frac23}$ to obtain the following asymptotic behaviour for the Kähler metric and the Kähler form as $r\rightarrow+\infty$:
\begin{align}
  g_K &= (k^{-1}\lambda)^{\frac23}g = \frac{dr^2+\eta^2}{r^2} + \gamma + O_{g_K}(\frac1r), \label{eq:8}\\
 \omega_K &= (k^{-1}\lambda)\tau = - \frac{dr\wedge \eta}{r^2} + \vol_\gamma + O_{g_K}(\frac1r),\label{eq:9}
\end{align}
which means that the Kähler metric (and its complex structure) is locally asymptotic to the product of a Poincaré disk (curvature $-1$) with a round sphere (curvature $+1$).

One has $W^+_{g_K}=(k^{-1}\lambda)^{-\frac23}W^+_g$ (seen as endomorphisms of $\Lambda^+$) and therefore the simple eigenvalue of $W^+_{g_K}$ is $k^{\frac23}\lambda^{\frac13}$. For a Kähler metric, this is also $\frac{\Scal}6$ so we have
\begin{equation}
\Scal_{g_K}=6k^{\frac23}\lambda^{\frac13}.\label{eq:10}
\end{equation}
In particular, $\Scal_{g_K}\rightarrow0$ when $r\rightarrow\infty$, which is compatible with the previous asymptotics.

A Kähler metric is \emph{extremal} if the scalar curvature is the Hamiltonian potential of a Killing vector field.
\begin{lem}
  If the ALF space $(M^4,g)$ admits a twistor 2-form $\tau$ satisfying (\ref{eq:3}), then the following are equivalent:
  \begin{itemize}
  \item $(M^4,g)$ is Ricci flat;
  \item $\frac{g}{|\tau|^3}$ is an extremal, Bach flat, Kähler metric.
  \end{itemize}
\end{lem}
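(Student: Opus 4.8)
The plan is to route both implications through Derdzinski's theorem identifying Bach flat Kähler surfaces with conformally Einstein metrics, after pinning down the relevant conformal factor in terms of the scalar curvature. First I would record what is common to the two items. Since $\tau$ satisfies (\ref{eq:3}) its norm grows like $r$, so $\tau$ is not parallel and $W^+\not\equiv0$: we are in the half algebraically special situation, with simple eigenvalue $\lambda$ of $W^+$ and $\tau=\lambda^{-\frac13}\omega$ where $\omega=g(J\cdot,\cdot)$, $|\omega|^2=2$. On the open set $U=\{\tau\neq0\}$ (which contains the ALF end, and on which $\lambda$ is finite and nonzero, since $|\tau|=\sqrt2\,\lambda^{-\frac13}$), the mere existence of a selfdual twistor $2$-form forces the conformal metric $g_K:=|\tau|^{-2}g$ (a constant multiple of the metric $\lambda^{\frac23}g$ of Theorem \ref{th:derdzinski}) to be Kähler for $J$, by \cite{Pon92,ApoCalGau03}; this needs no curvature hypothesis on $g$. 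Hence $g_K$ is Kähler in both items, and the lemma reduces to the equivalence ``$g$ Ricci flat $\Leftrightarrow$ $g_K$ extremal and Bach flat''.

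Next I would identify $g$ with the Derdzinski conformal representative. Writing $s:=\Scal_{g_K}$, the simple eigenvalue of $W^+_{g_K}$ equals $s/6$; combining this with the conformal rescaling $W^+_g=|\tau|^{-2}W^+_{g_K}$ of endomorphisms of $\Lambda^+$ and with $\lambda=2\sqrt2\,|\tau|^{-3}$, read off from $\tau=\lambda^{-\frac13}\omega$, yields $s=12\sqrt2\,|\tau|^{-1}$. Thus $|\tau|\propto s^{-1}$ (in particular $s$ is nowhere zero on $U$), and $g=|\tau|^2 g_K$ is a constant multiple of $s^{-2}g_K$, which is exactly the conformal factor appearing in Derdzinski's theorem.

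I can then conclude in both directions. If $g$ is Ricci flat it is Einstein, hence Bach flat, since Einstein $4$-manifolds have vanishing Bach tensor; as the Bach tensor is conformally invariant in dimension $4$, the metric $g_K$ is Bach flat as well, and Derdzinski's theorem \cite{Der83} moreover shows that $s$ is a Killing potential, i.e. $g_K$ is extremal. Conversely, if $g_K$ is Bach flat Kähler, then $s^{-2}g_K$, and therefore $g$ by the previous step, is Einstein on $U$; the Einstein constant must vanish, because $\Scal_g$ is constant while the ALF decay (\ref{eq:1}) sends the curvature, hence $\Scal_g$, to $0$ at infinity. Therefore $\Ric_g=0$ on $U$, and hence on all of $M$ by continuity.

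The main obstacle I anticipate is not the computation but the precise invocation of Derdzinski's theorem: one must use it in exactly the form that couples Bach flatness of a Kähler surface to the Einstein condition on $s^{-2}g_K$ and that simultaneously delivers extremality (the scalar curvature being a Killing potential), and one must carry the conformal weight bookkeeping of the previous paragraph carefully so that the factor $|\tau|^2$ is genuinely a constant multiple of $s^{-2}$ rather than merely proportional pointwise. A secondary point is controlling the locus $M\setminus U$ where the conformal Kähler structure degenerates (one checks that $s\to\infty$ there, so Derdzinski's hypothesis $s\neq0$ is never violated on $U$) and extending the Ricci flat conclusion across it; and the vanishing of the Einstein constant, though immediate, rests entirely on the ALF hypothesis.
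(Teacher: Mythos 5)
Your forward implication is essentially the paper's: Ricci flat gives Einstein, hence Bach flat, and Bach flatness descends to $g_K$ by conformal invariance; for extremality the paper re-derives Derdzinski's result by hand (from $\Ric^{g_K}_0=-2(\nabla^{g_K}d\lambda^{\frac13})_0/\lambda^{\frac13}$ and the $J$-invariance of the trace-free Ricci tensor of a Kähler metric, so that $J\nabla^{g_K}\lambda^{\frac13}$ is Killing and $\Scal_{g_K}=6kx_1$ is its momentum), whereas you cite \cite{Der83} for it; both are legitimate, and your reading of the conformal factor as $|\tau|^{-2}$ (the statement's $|\tau|^{3}$ must indeed be $|\tau|^{2}$, since $g/|\tau|^3$ differs from $g_K$ by the nonconstant factor $\lambda^{\frac13}$) is the right one.

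The genuine gap is in your second paragraph, and it makes your converse circular. The relation $\tau=\lambda^{-\frac13}\omega$ --- equivalently $\lambda=2\sqrt2\,|\tau|^{-3}$, equivalently $s\propto|\tau|^{-1}$, equivalently $g\propto s^{-2}g_K$ --- is not a consequence of the standing hypotheses: it is precisely Derdzinski's \emph{conclusion} for an Einstein metric, i.e.\ it is what singles out the Einstein representative of the conformal class. What a nonvanishing twistor form gives unconditionally is only $\tau=\tfrac{|\tau|}{\sqrt2}\,\omega$, hence $\lambda=\tfrac{s}{6|\tau|^2}$ pointwise, with no link whatsoever between $|\tau|$ and $s$: for \emph{any} positive function $u$, the metric $u^2g_K$ carries the twistor form $u^3\omega_K$. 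Concretely, take a true Ricci flat ALF instanton and replace $g$ by $v^2g$ and $\tau$ by $v^3\tau$ with $v\equiv1$ outside a compact set (allowed by the conformal covariance $f^{-3}\mathcal{T}^{f^2g}(f^3\tau)=\mathcal{T}^g(\tau)$): the hypotheses --- ALF, twistor form with asymptotics (\ref{eq:3}), $g_K=|\tau|^{-2}g$ extremal Bach flat Kähler --- are untouched, yet $v^2g$ is no longer Einstein. So $s=12\sqrt2\,|\tau|^{-1}$ cannot be ``read off''; in the converse direction it is the whole content of what must be proved, and your argument assumes it. (You flagged the constant-versus-pointwise issue, but the obstruction sits one level deeper: even pointwise proportionality is unavailable.) A secondary error in the setup: (\ref{eq:3}) does not force $W^+\not\equiv0$; the paper's own example, $\Bbb{R}^3\times S^1$ with $\tau=r(-dr\wedge dt+r^2\vol_{S^2})$, satisfies (\ref{eq:3}) with $W^+\equiv0$, which is why the paper makes $W^+\neq0$ a separate hypothesis in proposition \ref{prop:twistor-2-forms}. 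By contrast, the paper's converse does not attempt your identification: it invokes \cite{CheLebWeb08} (Bach flat Kähler implies extremal with $s^{-2}g_K$ Einstein where $s\neq0$) and uses the ALF decay $\Ric=O(r^{-3})$ only to kill the Einstein constant, the identification of $g$ with that Einstein representative being part of the intended (admittedly loosely stated) meaning of the lemma rather than something deduced from (\ref{eq:3}).
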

\begin{proof}
  This lemma is well known, but it will be useful to recall the proof.
  Since $\Ric^g=0$, after the conformal change we obtain
\begin{equation}
  \label{eq:11}
  \Ric^{g_K}_0 = -2 \frac{(\nabla^{g_K}d\lambda^{\frac13})_0}{\lambda^{\frac13}}.
\end{equation}
Since $g_K$ is Kähler, it follows that $\nabla^{g_K}d\lambda^{\frac13}$ is $J$-invariant and therefore the (real) vector field $\nabla^{g_K}\lambda^{\frac13}$ is holomorphic. In particular, the Hamiltonian vector field $T=J \nabla^{g_K}(k^{-1}\lambda)^{\frac13}$ is Killing with respect to $g_K$ and therefore with respect to $g$ as well. (This vector is the vector $T$ of section \ref{sec:alf-metrics}).

If $x_1$ is the moment of $T$ with respect to $\omega_K$ ($dx_1=-T \lrcorner \omega_K$), we have
\begin{equation}
  \label{eq:12}
  x_1=(k^{-1}\lambda)^{\frac13}, \qquad g_K=x_1^2 g, \qquad \Scal_{g_K}=6 k x_1.
\end{equation}
The scalar curvature is therefore the Hamiltonian potential of a Killing vector field, which means that $g_K$ is extremal. It is also Bach flat since it is conformal to a Ricci flat metric.

The converse is well-known, see for example \cite{CheLebWeb08}: since $g_K$ is extremal and Bach flat, one obtains that $g$ is Einstein, maybe with nonzero Einstein constant; the asymptotics (\ref{eq:1}) implies the decay $\Ric=O(r^{-3})$, so the Einstein constant has to vanish.
\end{proof}

At this point the machinery of toric Kähler geometry applies for $(M,g_K)$. The moment map $\mu:M\rightarrow (\Bbb{t}^2)^*=\Bbb{R}^2$ has image a convex polytope $P$. Every edge of the polytope is the image of a divisor fixed by an element of $\Bbb{t}^2$; the vertices are the fixed points of the action. If we take an integral basis $(\partial/\partial\theta_1,\partial/\partial\theta_2)$ of $\Bbb{t}^2$, then the corresponding moment maps $(\mu_1,\mu_2)$ are affine coordinates on $P$. The Kähler form takes the form (action-angle coordinates):
\begin{equation}
  \label{eq:13}
  \omega_K = d\mu_1 \wedge d\theta_1 + d\mu_2 \wedge d\theta_2 .
\end{equation}
There is a convex function $\psi$ on $P$, the symplectic potential, so that the metric takes the form
\begin{equation}
  \label{eq:14}
  g_K = \sum _{i, j = 1} ^ 2  (\psi_{\mu_i \mu_j} d\mu_id\mu_j + \psi^{\mu_i \mu_j} d\theta_id\theta_j),
\end{equation}
where $\psi_{\mu_i \mu_j}=\frac{\partial^2\psi}{\partial \mu_i \partial \mu_j}$ and $(\psi^{\mu_i \mu_j})=(\psi_{\mu_i \mu_j})^{-1}$. The scalar curvature is given by Abreu's formula,
\begin{equation}
  \label{eq:15}
  \Scal_{g_K} = - \sum _{i, j = 1} ^2  \psi^{\mu_i \mu_j}_{\mu_i \mu_j},
\end{equation}
and the metric is extremal if and only this is the Hamiltonian function of a Killing vector field, that is if and only if $\Scal_{g_K}$ is an affine function.

The complex structure is given in the interior of $P$ by the holomorphic coordinates $z_j=\exp(\nu_j+i \theta_j)$ with $\nu_j=\psi_{\mu_j}$. It is useful to note that we always have a global holomorphic form above the interior of $P$, which extends over the whole $M$ as a meromorphic form with simple poles along the divisors:
\begin{equation}
  \label{eq:16}
  \Omega = -4i \partial\theta_1 \wedge \partial\theta_2 = i(d\nu_1+id\theta_1)\wedge(d\nu_2+id\theta_2)
\end{equation}
(This normalization will be used later).

For $M$ to be smooth we need $P$ to be a Delzant polytope: each edge $E$ must be rational, and in particular has an integral equation $\lambda_E$. This means:
\begin{itemize}
\item $\lambda_E$ is an affine function on $(\Bbb{t}^2)^*$ vanishing on $E$; so we can write $\lambda_E(x)=\langle v_E,x\rangle+c_E$ for some $v_E\in\Bbb{t}^2$ and $c_E\in\Bbb{R}$;
\item the normal $v_E=d\lambda_E$ is a primitive element of $\Bbb{Z}^2 \subset \Bbb{t}^2$, and it is an inward normal (so that $\lambda_E>0$ in the interior of $P$).
\end{itemize}
Finally it is requested that at each vertex $E_1\cap E_2$, the two inward normals $v_{E_1}$, $v_{E_2}$ form an integral basis of $\Bbb{Z}^2 \subset \Bbb{t}^2$. In our case of course this does not apply to the edge at infinity $E_\infty$ which can be irrational (this corresponds to the vector field $T$ having non closed orbits).

Then the smoothness of the metric $g_K$ is read on its symplectic potential:
\begin{itemize}
\item near an edge $E$ one must have $\psi=\frac12 \lambda_E \log \lambda_E + $smooth function;
\item near a vertex $E_1\cap E_2$ one must have $\psi=\frac12 \sum_{j =1} ^2 \lambda_{E_j} \log \lambda_{E_j} + $smooth function.
\end{itemize}
A conical singularity of angle $2\pi\alpha_E$ around $E$ is obtained by replacing $\lambda_E$ by $\frac1{\alpha_E}\lambda_E$ in the above behaviour of $\psi$.

Finally, we give the idea to obtain a Poincaré behaviour (\ref{eq:8}) near the edge at infinity $E_\infty$, but here it seems that there is no nice regularity result in the literature so we will actually use a much weaker condition, see section \ref{sec:case-edge-at}. Choose an affine equation $x_1$ of $E_\infty$, and suppose:
\begin{itemize}
\item near $E_\infty$ one has $\psi=-\log x_1 + a x_1 \log x_1 + $smooth function for some constant $a$: this does not depend on the choice of $x_1$, which can be irrational;
\item near a vertex at infinity $E_\infty\cap E_j$ we need $\psi=-\log x_1 + a x_1 \log x_1+\frac12 \lambda_{E_2} \log \lambda_{E_2} + $smooth function.
\end{itemize}
Then the corresponding metric has Poincaré behaviour near $E_\infty$. The function $x_1$ here is the same as in (\ref{eq:12}): it is the Hamiltonian function of a Killing vector field, and vanishes on the boundary at infinity.

\section{The Tod ansatz}
\label{sec:ansatz}

In this section, we present a quick derivation of the ansatz of Tod \cite{Tod20}. In contrast with \cite{Tod20}, our approach initially relies on a well-known K\"ahler ansatz due to LeBrun, but like in \cite{Tod20} ends up with a celebrated B\"aklund transformation introduced by Ward \cite{War90}. 

\subsection{The first ansatz}
\label{sec:first-ansatz}

We start from a Kähler structure $(g_K,J,\omega_K)$ with a Hamiltonian Killing vector field $T$. We can then apply the LeBrun ansatz:
\begin{prop}[{\cite[§2]{LeB91b}}]\label{prop:lebrun_ansatz}
  One can write $g_K$ and $\omega_K$ under the form
  \begin{align}
    \label{eq:17}
    g_K &= W \big( dx_1^2 + e^v(dx_2^2+dx_3^2) \big) + W^{-1} \eta^2 \\
    \omega_K &= dx_1 \wedge \eta + We^v dx_2 \wedge dx_3,\label{eq:18}
  \end{align}
  where the 1-form $\eta$ and the function $W=W(x_1,x_2,x_3)=|T|_{g_K}^{-2}$ satisfy $T\lrcorner \eta=1$ and
  \begin{gather}
    \label{eq:19}
    d\eta = (We^v)_{x_1} dx_2\wedge dx_3 + W_{x_2} dx_3\wedge dx_1 + W_{x_3} dx_1\wedge dx_2,\\
    \label{eq:20}
    (We^v)_{x_1 x_1} + W_{x_2 x_2} + W_{x_3 x_3} = 0.
  \end{gather}
 The second equation is the compatibility condition $d(d\eta)=0$.  Finally,
  \begin{equation}
    \label{eq:21}
    \Scal_{g_K} = - \frac{ (e^v)_{x_1 x_1}+v_{x_2 x_2}+v_{x_3 x_3}}{We^v}.
  \end{equation}
\end{prop}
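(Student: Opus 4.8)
The plan is to reproduce LeBrun's normal-form construction directly from the Kähler structure with its Hamiltonian Killing field $T$, turning all the geometric data into the functions $W$, $v$ and the connection 1-form $\eta$. First I would let $x_1$ be a moment map for $T$, normalized so that $dx_1 = -T\lrcorner\omega_K$; this is the function appearing in (\ref{eq:12}). Since $T$ is holomorphic and Killing, $JT$ is also a (local) symmetry, and the distribution orthogonal to $\mathrm{span}(T,JT)$ inherits a complex structure and a Kähler quotient. I would choose isothermal coordinates $(x_2,x_3)$ on this $2$-dimensional quotient so that the transverse metric is $We^v(dx_2^2+dx_3^2)$ for some conformal factor; the weight $W=|T|_{g_K}^{-2}$ is forced by the requirement that $T\lrcorner\eta=1$ together with the standard Gibbons--Hawking-type decomposition of the metric into a part along $T$ and a part transverse to it. Writing $\eta$ as the connection $1$-form dual to $T$ (i.e. $\eta=|T|^{-2}g_K(T,\cdot)$), I would then simply read off (\ref{eq:17}) and verify (\ref{eq:18}) by computing $\omega_K=g_K(J\cdot,\cdot)$ in this frame, using that $J$ pairs $dx_1$ with $\eta$ and $dx_2$ with $dx_3$.

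The next step is the curvature equation for $\eta$. The content of (\ref{eq:19}) is that $d\eta$ is determined by the gradients of $W$ and $We^v$: this is exactly the statement that $\omega_K$ is closed. I would compute $d\omega_K$ from (\ref{eq:18}) and set it to zero; the coefficient of each independent $3$-form gives the three components of (\ref{eq:19}), expressing $d\eta$ in terms of $(We^v)_{x_1}$, $W_{x_2}$, $W_{x_3}$. Equation (\ref{eq:20}) is then immediate: applying $d$ to (\ref{eq:19}) and using $d^2\eta=0$ forces the divergence-type relation $(We^v)_{x_1x_1}+W_{x_2x_2}+W_{x_3x_3}=0$, which is the integrability (compatibility) condition guaranteeing that a connection $\eta$ with the prescribed curvature exists, as the statement itself points out.

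Finally, for the scalar curvature formula (\ref{eq:21}) I would compute $\Scal_{g_K}$ for the metric (\ref{eq:17}). The cleanest route is to exploit that the metric is a warped/quotient product adapted to the Killing field, so the scalar curvature reduces to a Laplacian-type expression on the transverse slice acting on the conformal factor. Concretely I expect $\Scal_{g_K}$ to come out as $-(We^v)^{-1}$ times the flat Laplacian (in the $(x_1,x_2,x_3)$ variables, with the $x_1$-direction weighted by $e^v$) applied to $e^v$, giving $-\big((e^v)_{x_1x_1}+v_{x_2x_2}+v_{x_3x_3}\big)/(We^v)$. The main obstacle is precisely this last computation: carrying out the full curvature calculation for a metric with a nonconstant conformal factor $e^v$ and a nontrivial connection term $W^{-1}\eta^2$ is the only genuinely laborious part, and one must be careful to use (\ref{eq:19})--(\ref{eq:20}) to cancel the many cross terms involving $d\eta$ and the $x_1$-derivatives of $W$. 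Since this is LeBrun's classical result, I would organize the computation in an adapted orthonormal coframe $(W^{1/2}dx_1,\,W^{1/2}e^{v/2}dx_2,\,W^{1/2}e^{v/2}dx_3,\,W^{-1/2}\eta)$ and invoke the structure equations, letting the compatibility condition (\ref{eq:20}) do the simplification.
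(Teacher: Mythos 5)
Your setup for (\ref{eq:17})--(\ref{eq:18}) is essentially right and standard: moment map $x_1$, local quotient by the complexified flow of $T$, isothermal (holomorphic) coordinate $x_2+ix_3$ on the quotient, and $W=|T|^{-2}_{g_K}$. The genuine gap is in your derivation of (\ref{eq:19}). Since everything is $T$-invariant and $T\lrcorner d\eta = d(\eta(T)) - L_T\eta = 0$, one can write $d\eta = a\,dx_2\wedge dx_3 + b\,dx_3\wedge dx_1 + c\,dx_1\wedge dx_2$, and then
\[
d\omega_K = -\,dx_1\wedge d\eta + (We^v)_{x_1}\,dx_1\wedge dx_2\wedge dx_3
= \bigl((We^v)_{x_1}-a\bigr)\,dx_1\wedge dx_2\wedge dx_3 ,
\]
because the two components of $d\eta$ containing $dx_1$ are annihilated by $dx_1\wedge{}$. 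So, contrary to your claim that "the coefficient of each independent 3-form gives the three components", $d\omega_K=0$ is a \emph{single} scalar equation and only yields $a=(We^v)_{x_1}$; it says nothing about $b$ and $c$. The remaining components $b=W_{x_2}$, $c=W_{x_3}$ encode the integrability of $J$ together with the correct normalization of $v$ and of the coordinates $(x_2,x_3)$ — concretely, the closedness (holomorphicity) of the $(2,0)$-form
\[
e^{-\frac v2}(\omega_2+i\omega_3) = (dx_2+idx_3)\wedge(\eta - iW\,dx_1),
\]
which is exactly how remark \ref{rem:def-v} of the paper packages the argument: $d\bigl(e^{-\frac v2}\omega_2\bigr)=0$ gives $b=W_{x_2}$ and $d\bigl(e^{-\frac v2}\omega_3\bigr)=0$ gives $c=W_{x_3}$, each again one scalar equation. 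Without this input (\ref{eq:19}) is not established, and the gap propagates: your derivation of (\ref{eq:20}) applies $d$ to the \emph{full} equation (\ref{eq:19}), so it fails as well.

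The same missing object also streamlines the last step. For (\ref{eq:21}) you propose a brute-force curvature computation in an orthonormal coframe; this can be pushed through, but the efficient argument — the one the paper relies on just after remark \ref{rem:def-v} — again uses the distinguished holomorphic $2$-form $\Omega$: since $e^{-v}=\frac14|\Omega|^2_{g_K}$ with $\Omega$ holomorphic, the Ricci form is $\rho^{g_K}=-\frac12\,dd^C v$, and tracing this against $\omega_K$ in the frame (\ref{eq:17})--(\ref{eq:18}), using (\ref{eq:19}), gives (\ref{eq:21}) with almost no work. In short, the one ingredient your proposal omits — the $T$-invariant holomorphic $2$-form that fixes $v$, the coordinates $(x_2,x_3)$, and the $dx_1$-components of $d\eta$ — is precisely the key to (\ref{eq:19}), (\ref{eq:20}) and (\ref{eq:21}).
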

\begin{rem}\label{rem:def-v} The coordinates $x_2, x_3$ and the function $v$ are determined by the choice of a local, non-vanishing holomorphic  $T$-invariant 2-form $\Omega = e ^{- \frac{v}{2}}  (\omega _2 + i \omega _3)$, where $(\omega _K, \omega _2, \omega _3)$ form a direct  (local) orthonormal  frame of $\Lambda ^+M$ (with $|\omega_j|^2=2$) and $e ^{- v} = \frac{1}{4} |\Omega| ^2 _{g _K}$;  $\Omega$ being holomorphic is closed, and so is $T\lrcorner \Omega$, since $\Omega$ is $T$-invariant;,  we thus define  $x_2, x_3$  by $T \lrcorner \Omega = - (d x_2 + i d x_3)$, while $\omega _2, \omega _3$ are given by $\omega _2 = e ^{\frac{v}{2}} (d x_2 \wedge \eta + W \, d x_3 \wedge d x_1)$, $\omega _3 = e ^{\frac{v}{2}} (d x_3 \wedge \eta + W \, d x_1 \wedge d x _2)$; since $e ^{- \frac{1}{2}} \omega _2$, $e ^{- \frac{1}{2}} \omega _3$ are closed, as well as $\omega _K$,  (\ref{eq:19}), hence (\ref{eq:20}), readily follow. 
   In our toric setting, when $\eta = dt - F \, d x _3$ and $v, W, F$ are functions of $x_1, x_2$ only, see section \ref{sec:second-ansatz} below, the formula (\ref{eq:16}) gives $x_2=\nu_2$ and $x_3=\theta_2$, $d \nu _1 = W \, d x_1 + F \, d x _2$ and $\theta _1 = t$. 
\end{rem}
We deduce
\begin{prop}
  Let $(g_K,J,\omega_K)$ be a Kähler structure as in proposition \ref{prop:lebrun_ansatz}. Then the conformal metric $g=x_1^{-2}g_K$ is Ricci flat if and only if
  \begin{equation}
 W=\frac2{kx_1^3}-\frac{v_{x_1}}{kx_1^2}\label{eq:22}
\end{equation}
and $v$ satisfies the twisted Toda equation
\begin{equation}\label{eq:23}
  (e^v)_{x_1 x_1}+v_{x_2 x_2}+v_{x_3 x_3} = -6k x_1 W e^v.
\end{equation}
\end{prop}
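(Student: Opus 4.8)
The plan is to impose $\Ric^g=0$ directly on $g=x_1^{-2}g_K$ by means of the conformal transformation law for the Ricci tensor, and to identify its trace and trace-free parts with the two displayed equations. I use throughout that, by \eqref{eq:18}, $T\lrcorner\omega_K=-dx_1$, so that $x_1$ is the moment map of the Killing field $T$, and that for a conformally Kähler Ricci-flat metric one has $\Scal_{g_K}=6kx_1$ (this is \eqref{eq:10} and \eqref{eq:12}).

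First I would record the conformal law in dimension $4$ for $g=e^{2\varphi}g_K$ with $\varphi=-\log x_1$; writing $\Delta_{g_K}=\tr\nabla^{g_K}d$, it reads
\begin{equation*}
  \Ric^g=\Ric^{g_K}+\frac{2\,\nabla^{g_K}dx_1}{x_1}+\Bigl(\frac{\Delta_{g_K}x_1}{x_1}-\frac{3\,|dx_1|^2_{g_K}}{x_1^2}\Bigr)g_K .
\end{equation*}
Taking its $g_K$-trace, the condition $\Ric^g=0$ forces $\Scal_{g_K}=\frac{12\,|dx_1|^2_{g_K}}{x_1^2}-\frac{6\,\Delta_{g_K}x_1}{x_1}$ (this is just $\Scal_g=0$). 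From \eqref{eq:17} one reads off $|dx_1|^2_{g_K}=W^{-1}$, and since the volume form of \eqref{eq:17} equals $We^v\,dx_1\wedge dx_2\wedge dx_3\wedge dt$ while the $x_1$-line is $g_K$-orthogonal to the remaining directions, one computes $\Delta_{g_K}x_1=(We^v)^{-1}\partial_{x_1}(e^v)=v_{x_1}/W$. Substituting these together with $\Scal_{g_K}=6kx_1$ gives $6kx_1=\frac{12}{x_1^2W}-\frac{6v_{x_1}}{x_1W}$, that is $kx_1^3W=2-x_1v_{x_1}$, which is exactly \eqref{eq:22}.

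For the second equation I would use the trace-free part of the same conformal law, which is precisely \eqref{eq:11}, namely $\Ric_0^{g_K}=-2\,(\nabla^{g_K}dx_1)_0/x_1$. Since $T=J\nabla^{g_K}x_1$ is Killing — a feature built into the LeBrun ansatz of Proposition \ref{prop:lebrun_ansatz} — the $J$-anti-invariant part of $\nabla^{g_K}dx_1$ vanishes, so both sides of \eqref{eq:11} are $J$-invariant trace-free tensors. Under the cohomogeneity-two toric symmetry this tensor identity collapses to the single scalar relation $\Scal_{g_K}=6kx_1$; inserting the latter into LeBrun's formula \eqref{eq:21} and clearing the factor $We^v$ yields the twisted Toda equation \eqref{eq:23}. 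Running these reductions in reverse gives the converse: \eqref{eq:23} with \eqref{eq:21} restores $\Scal_{g_K}=6kx_1$, hence the trace-free equation \eqref{eq:11} holds, while \eqref{eq:22} combined with $\Scal_{g_K}=6kx_1$ is equivalent to the vanishing of the trace part; the two together give $\Ric^g=0$.

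The main obstacle is the step asserting that the full trace-free tensor equation \eqref{eq:11} is equivalent to the single scalar condition $\Scal_{g_K}=6kx_1$ and imposes nothing more. The vanishing of the $J$-anti-invariant Hessian is automatic once $T$ is Killing, but identifying the $J$-invariant trace-free part of \eqref{eq:11} with the derivative of the scalar curvature — so that the surviving content is merely that $\Scal_{g_K}$ be affine in $x_1$ with slope $6k$ — requires the Kähler identities together with the second Bianchi identity $\delta^{g_K}\Ric^{g_K}=-\tfrac12 d\Scal_{g_K}$ and the automatically satisfied monopole equation \eqref{eq:20} (which is $d(d\eta)=0$). This is exactly the computation packaged in coordinate-free form in the preceding lemma relating Ricci-flatness to the extremal Bach-flat condition, and it is where all of the Kähler-geometric input is concentrated.
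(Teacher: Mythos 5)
Your trace computations are correct and agree with the paper's: the conformal law for $\Ric$, together with $|dx_1|^2_{g_K}=W^{-1}$, $\Delta_{g_K}x_1=v_{x_1}/W$ (which the paper obtains from $d^Cx_1=\eta/W$ and (\ref{eq:19}) rather than from the volume form) and $\Scal_{g_K}=6kx_1$, does yield (\ref{eq:22}); and (\ref{eq:23}) is indeed LeBrun's formula (\ref{eq:21}) with $\Scal_{g_K}=6kx_1$ inserted. The gap is the step you yourself flag as the ``main obstacle'': the assertion that the trace-free equation (\ref{eq:11}) collapses, under the toric symmetry, to the single scalar relation $\Scal_{g_K}=6kx_1$, so that this relation plus $\Scal_g=0$ already gives $\Ric^g=0$. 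This is false as stated: since a Killing potential is a moment map up to an additive constant, the condition ``$\Scal_{g_K}=6kx_1$ with $x_1$ the moment of a Hamiltonian Killing field'' is nothing but extremality, and extremal toric Kähler surfaces are very far from being conformally Einstein --- Calabi's extremal metrics on the Hirzebruch surfaces of degree $\ge 2$ are toric and extremal with nonconstant Killing-potential scalar curvature, yet these surfaces carry no Hermitian Einstein metric at all. So the $J$-invariant trace-free part of (\ref{eq:11}) contains strictly more information than ``$\Scal_{g_K}$ is affine in $x_1$''.

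Nor can the missing content be delegated to the lemma of section \ref{sec:toric-kahl-geom}, as you propose: the converse direction of that lemma assumes $g_K$ is extremal \emph{and Bach flat} and quotes \cite{CheLebWeb08}; Bach flatness is precisely the additional fourth-order condition that would have to be established here, not a consequence of extremality. What is actually true --- and is the real content of the converse of the proposition --- is that the two scalar equations (\ref{eq:22}) and (\ref{eq:23}), used \emph{together with the structure equations (\ref{eq:19})--(\ref{eq:20}) of the LeBrun ansatz}, force the trace-free Ricci tensor of $g$ to vanish. The paper proves this by explicit verification: writing $\Ric_0^g=\Ric_0^{g_K}+2x_1^{-1}(\nabla_Kdx_1)_0$ and using the identity $\rho^{g_K}=-\frac12 dd^Cv$ coming from the holomorphic volume form of remark \ref{rem:def-v}, Ricci flatness of $g$ is reduced to the primitive $(1,1)$-form identity (\ref{eq:26}), namely $\frac12(dd^Cv)_0+x_1^{-1}(dd^Cx_1)_0=0$, which is then checked by direct computation from (\ref{eq:22})--(\ref{eq:23}). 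Some computation of this kind (it is a tensor identity with three independent components, not one scalar equation) is unavoidable; your proposal replaces it with an appeal to a lemma that does not apply, so the converse direction remains unproved.
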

\begin{proof}
  Since $g=x_1^{-2}g_K$ we have $\Scal_g=x_1^2\Scal_{g_K}-6x_1\Delta_K x_1-12|dx_1|^2_K$ and therefore
  \begin{equation}
    \label{eq:24}
    k x_1^3 - 6 x_1 \Delta_K x_1 - 12 |dx_1|_K^2 = 0.
  \end{equation}
  We calculate $\Delta_Kx_1$: one has $d^Cx_1=\frac \eta W$ so
  \begin{equation}
    \label{eq:25}
    dd^Cx_1 = \frac{d\eta}W - \frac{dW\wedge \eta}{W^2}.
  \end{equation}
  Using (\ref{eq:19}) we thus get $\Delta_Kx_1 = - \frac{v_{x_1}}W$. Injecting this value in (\ref{eq:24}) we obtain (\ref{eq:22}). Equation (\ref{eq:23}) is just (\ref{eq:20}) with $\Scal_{g_K}=6kx_1$.

  Conversely, if we have (\ref{eq:22}) and (\ref{eq:23}) it follows that $\Scal_{g_K}=6kx_1$ and $\Scal_g=0$. Since $\Ric_0^g=\Ric_0^{g_K}+2x_1^{-1}(\nabla_Kdx_1)_0$, we infer that $g$ is Ricci flat if $\rho_0^{g_K}+x_1^{-1}(dd^Cx_1)_0=0$, where the Ricci form $\rho^{g_K}$ is the (1,1)-form $\Ric^{g_K}(J\cdot,\cdot)$. By remark \ref{rem:def-v} we have a holomorphic volume form $\Omega$ with $|\Omega|^2_K=4e^{-v}$ so $\rho^{g_K}=-\frac12 dd^Cv$, so finally $g$ is Ricci flat if
  \begin{equation}
    \label{eq:26}
    \frac12 (dd^Cv)_0 + \frac{(dd^Cx_1)_0}{x_1} = 0.
  \end{equation}
  The calculation of (\ref{eq:26}) given (\ref{eq:22}) and (\ref{eq:23}) is left to the reader.
\end{proof}

One can transform (\ref{eq:23}) into the usual Toda equation by setting:
\begin{equation}
  \label{eq:27}
  V=x_1^2W, \quad \xi=\frac1{x_1}, \quad u=v-4 \log x_1.
\end{equation}
We then get the following formulas:
\begin{gather}
  V=\frac1k (-2\xi + \xi^2u_\xi), \label{eq:28}\\
  (e^u)_{\xi\xi}+u_{x_2 x_2}+u_{x_3 x_3}=0, \label{eq:29}\\
  g = \frac{g_K}{x_1^2} = V\big( d\xi^2 + e^u(dx_2^2+dx_3^2) \big) + V^{-1}\eta^2, \label{eq:30}\\
  d\eta = -\xi^2 (\xi^{-2}e^uV)_\xi dx_2\wedge dx_3 - V_{x_2} dx_3\wedge d\xi - V_{x_3} d\xi\wedge dx_2.\label{eq:31}
\end{gather}

\subsection{The second ansatz}
\label{sec:second-ansatz}
We now suppose that the previous data does not depend on $x_3$. It follows that the 1-form $\eta$ can be put in the form
\begin{equation}
  \label{eq:32}
  \eta=dt - F dx_3
\end{equation}
for some choice of the function $t$, which completes the coordinate system $\xi$, $x_2$, $x_3$, and $F=F(\xi,x_2)$. The Hamiltonian vector field is $T$ is simply $\frac \partial{\partial t}$, and we have another commuting Hamiltonian Killing vector field, $\frac \partial{\partial x_3}$, so we are in the toric situation. The Toda equation (\ref{eq:29}) becomes
\begin{equation}
  \label{eq:33}
  (e^u)_{\xi\xi} + u_{x_2 x_2} = 0.
\end{equation}
From (\ref{eq:31}) (\ref{eq:28}) and (\ref{eq:33}) it follows that
\begin{equation}
  \label{eq:34}
  F_\xi=-V_{x_2}=-\frac1k \xi^2u_{\xi x_2}, \qquad F_{x_2}=\xi^2(\xi^{-2}e^uV)_\xi=-\frac1k\big( \xi^2u_{x_2 x_2}-2e^u+2\xi e^uu_\xi \big).
\end{equation}
The compatibility equation is simply the Toda equation (\ref{eq:33}).

We now perform the Ward ansatz. Take standard coordinates $(u_1,u_2,u_3)$ on $\Bbb{R}^3$ and define $\rho=\sqrt{u_1^2+u_2^2}$ and $z=u_3$. The main statement is the following:
\begin{prop}
  Suppose that $U$ is a harmonic axisymmetric function on $\Bbb{R}^3$, that is satisfies the equation
  \begin{equation}
    \label{eq:35}
    \frac1\rho(\rho U_\rho)_\rho + U_{zz} = 0.
  \end{equation}
  If we define
  \begin{equation}
    \label{eq:36}
    \xi = \frac12 \rho U_\rho, \qquad x_2 = - \frac12 U_z,
  \end{equation}
  then the function $u=\log \rho^2$ is a solution to the reduced Toda equation (\ref{eq:33}).
\end{prop}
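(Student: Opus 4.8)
The plan is to read (\ref{eq:36}) as a change of coordinates $(\rho,z)\mapsto(\xi,x_2)$ on a domain of the half-plane $\{\rho>0\}$, to express the operators $\partial_\xi,\partial_{x_2}$ as explicit vector fields in $(\rho,z)$, and then to verify (\ref{eq:33}) for $u=\log\rho^2$ by direct substitution. First I would differentiate (\ref{eq:36}): one has $\xi_\rho=\tfrac12(U_\rho+\rho U_{\rho\rho})$, $\xi_z=\tfrac12\rho U_{\rho z}$, $(x_2)_\rho=-\tfrac12 U_{\rho z}$ and $(x_2)_z=-\tfrac12 U_{zz}$. The hypothesis enters only through the last entry: the axisymmetric Laplace equation (\ref{eq:35}), namely $U_{\rho\rho}+\tfrac1\rho U_\rho+U_{zz}=0$, turns $(x_2)_z$ into $\tfrac12\bigl(U_{\rho\rho}+\tfrac1\rho U_\rho\bigr)=\tfrac1\rho\,\xi_\rho$. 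Together with the tautology $(x_2)_\rho=-\tfrac1\rho\,\xi_z$ this yields the \emph{axisymmetric Cauchy--Riemann relations}
\[
  \xi_\rho=\rho\,(x_2)_z,\qquad \xi_z=-\rho\,(x_2)_\rho,
\]
which say precisely that $x_2$ is an axisymmetric harmonic function and $\xi$ its Stokes stream function. Differentiating these once more and using equality of mixed partials produces the auxiliary identity $\xi_{\rho\rho}+\xi_{zz}=\tfrac1\rho\,\xi_\rho$; this is the working incarnation of (\ref{eq:35}) that the final computation will consume.

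Next I would invert the Jacobian. Writing $D=\xi_\rho^2+\xi_z^2$, the Cauchy--Riemann relations make the Jacobian of $(\xi,x_2)$ conformal-like, with determinant $D/\rho$, and one finds
\[
  \partial_\xi=\frac1D\bigl(\xi_\rho\,\partial_\rho+\xi_z\,\partial_z\bigr),\qquad
  \partial_{x_2}=\frac\rho D\bigl(-\xi_z\,\partial_\rho+\xi_\rho\,\partial_z\bigr).
\]
Applying these to $e^u=\rho^2$ and $u=2\log\rho$, now regarded as functions of $(\rho,z)$, is immediate and gives the first derivatives $(e^u)_\xi=2\rho\,\xi_\rho/D$ and $u_{x_2}=-2\xi_z/D$.

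The final step, and the one genuine computation, is to apply $\partial_\xi$ and $\partial_{x_2}$ a second time and add. Expanding $(e^u)_{\xi\xi}+u_{x_2x_2}$ and factoring out the common $2/D^2$, the bracket splits into terms free of $1/D$ and terms carrying a factor $D_\rho$ or $D_z$. Grouping the first kind and substituting the auxiliary identity $\xi_{\rho\rho}+\xi_{zz}=\xi_\rho/\rho$ reduces them to $2\rho\,(\xi_\rho\xi_{\rho\rho}+\xi_z\xi_{\rho z})$; the second kind, after using $\xi_{\rho z}=\xi_{z\rho}$ and collecting, recombines into $-(\xi_\rho^2+\xi_z^2)(\xi_\rho\xi_{\rho\rho}+\xi_z\xi_{\rho z})=-D\,(\xi_\rho\xi_{\rho\rho}+\xi_z\xi_{\rho z})$, so its contribution to the bracket is $-2\rho\,(\xi_\rho\xi_{\rho\rho}+\xi_z\xi_{\rho z})$. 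The two pieces cancel, the bracket vanishes, and hence $(e^u)_{\xi\xi}+u_{x_2x_2}=0$, which is (\ref{eq:33}). The only real obstacle is this bookkeeping: there is no conceptual difficulty once the Cauchy--Riemann relations and the identity $\xi_{\rho\rho}+\xi_{zz}=\xi_\rho/\rho$ are in hand, but one must organize the second-order terms so that the $D$-derivative contributions visibly reassemble into $-D$ times the remaining factor.
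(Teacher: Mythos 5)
Your proposal is correct and takes essentially the same route as the paper's proof: treat (\ref{eq:36}) as a change of coordinates, invert the Jacobian, and verify (\ref{eq:33}) by direct substitution --- indeed your Cauchy--Riemann relations are exactly the paper's (\ref{eq:37}) (since $(x_2)_z=-\tfrac12 U_{zz}$ and $(x_2)_\rho=-\tfrac12 U_{\rho z}$), and your $D=\xi_\rho^2+\xi_z^2$ equals $\rho\Delta$ in the paper's notation. The only organizational difference is that you consume harmonicity once, through the stream-function identity $\xi_{\rho\rho}+\xi_{zz}=\xi_\rho/\rho$, so the final cancellation is pure algebra in $\xi$, whereas the paper computes with third derivatives of $U$ and closes by recognizing the residual term as $-\tfrac{U_{\rho z}}{2\Delta^2}\,\partial_z\bigl(\rho U_{\rho\rho}+\rho U_{zz}+U_\rho\bigr)=0$.
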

\begin{proof}
  This is proved by calculation. We have the first derivatives
  \begin{equation}
 \xi_\rho=-\frac12 \rho U_{zz}, \quad \xi_z=\frac12 \rho U_{\rho z}, \quad(x_2)_\rho=-\frac12 U_{\rho z}, \quad (x_2)_z=-\frac12 U_{zz}.\label{eq:37}
 \end{equation}
We invert this into
  \begin{equation}
    \label{eq:38}
    \rho_\xi = -\frac{U_{zz}}{2\Delta}, \quad \rho_{x_2} = - \frac{\rho U_{\rho z}}{2 \Delta}, \quad
    z_\xi = \frac{U_{\rho z}}{2\Delta}, \quad z_{x_2} = - \frac{\rho U_{zz}}{2\Delta},
  \end{equation}
  where $\Delta=\frac \rho4(U_{zz}^2+U_{\rho z}^2)$. A straightforward calculation using (\ref{eq:38}) leads to
  \begin{equation}
    \label{eq:39}
    (e^u)_{\xi\xi}+u_{x_2x_2} = -\frac{2\Delta_\rho}{\Delta^2}+\frac{U_{zz}^2}{2\Delta^2}
    + \frac{\rho U_{\rho zz}U_{zz}}{\Delta^2} + \frac{\rho U_{\rho\rho z}U_{\rho z}}{\Delta^2}
    - \frac{\rho U_{zzz}U_{\rho z}}{2\Delta^2}.
  \end{equation}
  But $\Delta_\rho=\frac14(U_{\rho z}^2+U_{zz}^2)+\frac \rho2(U_{zz}U_{\rho zz}+U_{\rho z}U_{\rho\rho z})$, so we get
  \begin{equation}
    \label{eq:40}
    \begin{split}
      (e^u)_{\xi\xi}+u_{x_2x_2} &= - \frac{U_{\rho z}}{\Delta^2}(U_{\rho z}+\rho U_{\rho\rho z}+\rho U_{zzz}) \\
      &= - \frac{U_{\rho z}}{2\Delta^2}(\rho U_{\rho\rho}+\rho U_{zz}+U_\rho)_z = 0.
    \end{split}
  \end{equation}
\end{proof}

\begin{cor}[Tod ansatz]\label{cor:harmarck}
  Under the same notations as in the proposition, from an axisymmetric harmonic function $U$ we deduce a Hermitian toric Ricci flat metric given by the formulas
  \begin{equation}
    \label{eq:41}
    g=e^{2\nu}(d\rho^2+dz^2) + V\rho^2 dx_3^2 + V^{-1} (dt-F dx_3)^2,
  \end{equation}
  where the functions $\nu$, $V$ and $F$ are given by:
  \begin{align}
    e^{2\nu} &= \frac14 V \rho^2 (U_{\rho z}^2+U_{zz}^2), \label{eq:42}\\
    V  &= - \frac1k \left( \rho U_\rho + \frac{U_\rho^2U_{zz}}{U_{\rho z}^2+U_{zz}^2} \right), \label{eq:43}\\
    F &= - \frac1k \left( -\frac{\rho U_\rho^2 U_{\rho z}}{U_{\rho z}^2+U_{zz}^2} + \rho^2 U_z +2H \right).\label{eq:44}
  \end{align}
  The function $H=H(\rho,z)$ is a conjugate of the harmonic function $U$: it is determined up to an additive constant by
  \begin{equation}
    \label{eq:45}
    H_\rho = - \rho U _z, \qquad H_z = \rho U_\rho.
  \end{equation}
  The formulas make sense on the domain where $V>0$ and $U_{\rho z}^2+U_{zz}^2>0$.
\end{cor}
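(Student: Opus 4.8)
The substantive analytic content---that the resulting $g$ is Hermitian, toric and Ricci flat---is already in place: the preceding proposition shows that $u=\log\rho^2$, read as a function of $(\xi,x_2)$ through the change of variables (\ref{eq:36}), solves the reduced Toda equation (\ref{eq:33}), and running backwards through the equivalences (\ref{eq:22})--(\ref{eq:23}) and (\ref{eq:27})--(\ref{eq:33}) then yields that $g=x_1^{-2}g_K$ is Ricci flat, while Hermitianity and the torus action come for free from the LeBrun ansatz (the metric is conformal to the Kähler $g_K$) and from the two commuting Killing fields $\partial_t,\partial_{x_3}$. So the only task of the corollary is to rewrite the metric (\ref{eq:30}) in the physical coordinates $(\rho,z,x_3,t)$ and read off $\nu$, $V$ and $F$.

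First I would rewrite the metric. Since (\ref{eq:36}) forces $e^u=\rho^2$, the term $Ve^u\,dx_3^2$ becomes $V\rho^2\,dx_3^2$ and $V^{-1}\eta^2$ is already $V^{-1}(dt-F\,dx_3)^2$; these are the last two terms of (\ref{eq:41}). For the conformally flat part I would substitute the differentials of $\xi$ and $x_2$ read off from (\ref{eq:37}),
\begin{equation*}
d\xi=-\tfrac12\rho U_{zz}\,d\rho+\tfrac12\rho U_{\rho z}\,dz,\qquad dx_2=-\tfrac12 U_{\rho z}\,d\rho-\tfrac12 U_{zz}\,dz,
\end{equation*}
into $V(d\xi^2+e^u\,dx_2^2)=V(d\xi^2+\rho^2\,dx_2^2)$. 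The $d\rho\,dz$ cross terms cancel and the coefficients of $d\rho^2$ and $dz^2$ coincide, giving $V(d\xi^2+\rho^2\,dx_2^2)=\tfrac14 V\rho^2(U_{\rho z}^2+U_{zz}^2)(d\rho^2+dz^2)$, which is exactly (\ref{eq:41}) together with (\ref{eq:42}). The formula for $V$ is then a direct substitution into (\ref{eq:28}): with $\xi=\tfrac12\rho U_\rho$ and $u_\xi=\tfrac2\rho\rho_\xi=-U_{zz}/(\rho\Delta)$ computed from (\ref{eq:38}), one gets $-2\xi=-\rho U_\rho$ and $\xi^2u_\xi=-U_\rho^2U_{zz}/(U_{\rho z}^2+U_{zz}^2)$, which assemble into (\ref{eq:43}).

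The genuine work is the formula for $F$, and I expect this to be the main obstacle. I would start from the two first-order equations (\ref{eq:34}) for $F_\xi$ and $F_{x_2}$, convert them into equations for $F_\rho$ and $F_z$ using the Jacobian (\ref{eq:37})--(\ref{eq:38}), and integrate. The algebraic pieces $-\rho U_\rho^2 U_{\rho z}/(U_{\rho z}^2+U_{zz}^2)$ and $\rho^2 U_z$ in (\ref{eq:44}) should account for the non-exact part, and the remaining closed one-form integrates to $2H$ with $H$ forced to satisfy (\ref{eq:45}). Here the harmonicity of $U$ enters twice: once to simplify the intermediate third derivatives, exactly as in the Ward proposition, and once as the integrability condition $\partial_z H_\rho=\partial_\rho H_z$ guaranteeing that such an $H$ exists, since $\partial_z(-\rho U_z)=\partial_\rho(\rho U_\rho)$ is precisely the axisymmetric Laplace equation (\ref{eq:35}). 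The cleanest presentation is probably to \emph{verify} the proposed $F$ of (\ref{eq:44}) rather than derive it, i.e. to differentiate (\ref{eq:44}) through the change of variables and check agreement with (\ref{eq:34}), so that the bookkeeping of signs and of the repeated use of (\ref{eq:35}) is contained. Finally the domain statement is immediate: $V>0$ is the condition for Riemannian signature, and $U_{\rho z}^2+U_{zz}^2>0$ is exactly the nonvanishing of $\Delta$ in (\ref{eq:38}), i.e. the condition that $(\xi,x_2)\mapsto(\rho,z)$ be a local diffeomorphism, so that all the above formulas are well defined.
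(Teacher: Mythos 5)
Your proposal is correct and follows essentially the same route as the paper: the metric rewriting via $d\xi^2+e^u dx_2^2=\frac{\rho^2}{4}(U_{zz}^2+U_{\rho z}^2)(d\rho^2+dz^2)$, the formula for $V$ from (\ref{eq:28}) and (\ref{eq:38}), and the integration of (\ref{eq:34}) for $F$ with harmonicity of $U$ serving as the integrability condition for $H$. The only presentational difference is that the paper packages the $F$ step by Tod's substitution $F=-\frac1k(\xi^2u_{x_2}-2x_2e^u+2H)$, which defines $H$ outright and makes (\ref{eq:45}) drop out of (\ref{eq:37}) --- exactly the bookkeeping of ``algebraic pieces plus a closed one-form'' that you describe.
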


We will call $U$ the \emph{generating function} of the metric $g$.
\begin{proof}
  It is a matter of putting everything together. From (\ref{eq:37}) we infer
  \begin{equation}
    \label{eq:46}
    d\xi^2 + e^u dx_2^2 = \frac{\rho^2}4 (U_{zz}^2+U_{\rho z}^2)(d\rho^2+dz^2).
  \end{equation}
  Then (\ref{eq:41}) follows from (\ref{eq:30}), with $e^{2\nu}$ given by (\ref{eq:42}); and (\ref{eq:43}) follows from (\ref{eq:28}) and (\ref{eq:38}). There remains to obtain the expression of $F$ in terms of $\rho$, $z$, and here we follow a trick of Tod: from (\ref{eq:34}) it is natural to introduce a function $H$ by
  \begin{equation}
    \label{eq:47}
    F = - \frac1k ( \xi^2 u_{x_2} - 2x_2 e^u + 2H),
  \end{equation}
  and $H$ satisfies
  \begin{equation}
    \label{eq:48}
    \begin{split}
      H_\xi &= -\xi u_{x_2}+x_2(e^u)_\xi = \frac \rho{2\Delta} (U_\rho U_{\rho z}+\rho U_z U_{zz}), \\
      H_{x_2} &= \xi(e^u)_\xi + x_2 (e^u)_{x_2} = - \frac{\rho^2}{2\Delta} (U_\rho U_{zz} + U_z U_{\rho z}).
    \end{split}
  \end{equation}
  From (\ref{eq:37}) again we then obtain (\ref{eq:45}) and (\ref{eq:44}).
\end{proof}
\begin{rem}
  In the toric case, (\ref{eq:41}) is the Harmark form of the metric which is well known in the physics literature, see for example \cite{Har04}. In particular $(\rho,z)$ are coordinates which give the uniformization of the metric $\sum _{i, j = 1} ^ 2 \psi_{\lambda_i \lambda_j}d\lambda_id\lambda_j$ on $P$. The corollary is a priori a local statement, but we will see in section \ref{sec:uniformization} that $(\rho,z)$ are global coordinates on the upper half-plane which uniformize $P$.
\end{rem}

\begin{rem}\label{rem:par-k}
  The parameter $k$ is a scale parameter, the metric obtained for the value $k$ is the same as the metric obtained for the value $1$ with a scale factor $\frac1k$. It is nevertheless useful to keep it since it influences the scale of the Kähler metric $g_K=x_1^2 g$. Since we want $g_K$ to be locally asymptotic to a product of a hyperbolic cusp with a round sphere, we will need to fix $k$ accordingly.
\end{rem}

\begin{rem}\label{rem:homogeneity}
  There is a homogeneity in the formulas (\ref{eq:41})--(\ref{eq:45}): if we consider the transformation $h_a(\rho,z)=(|a|\rho,az+b)$ (where $a$ can be negative), then the harmonic function $|a|^{-1} h_a^*U$ leads to a metric $|a|^{-3} H_a^*g$, where $H_a(\rho,z,t,x_3)=(|a|\rho,az+b,a^2t,\sign(a)x_3)$.
\end{rem}

\section{Boundary conditions}
\label{sec:boundary-conditions}

We now come back to our problem, with a toric extremal Kähler metric $(g_K,J,\omega_K)$, with polytope $P$, and conformal to a Ricci flat metric $g$. We analyze the boundary conditions on $\partial P$ and deduce the boundary conditions for the harmonic function $U$.

In all this section, we determine equivalents of the various objects near the boundary, and we also take derivatives of these equivalents. While a priori this is not correct, all the statements can be justified because we have a precise development of the symplectic potential $\psi$ near the boundary, which allows taking derivatives.

\subsection{The case of a finite edge}
\label{sec:case-finite-edge}

We begin with the case of an edge $E$ of $P$ with integral equation $\lambda_1=\lambda_E$. Near an interior point of $E$ we have $\psi=\frac12 \lambda_1 \log \lambda_1 + $smooth function. Completing $\lambda_1$ into a basis $(\lambda_1,\lambda_2)$ we obtain the behaviour of the metric on $P$ near $E$:
\begin{equation}
 \sum _{i, j = 1} ^2 \psi_{\lambda_i \lambda_j} d\lambda_i d\lambda_j \sim \frac{d\lambda_1^2}{2\lambda_1}  + \psi_{\lambda_2 \lambda_2} d\lambda_2^2,\label{eq:49}
\end{equation}
and it follows from (\ref{eq:16}) and (\ref{eq:27}) that
\begin{equation}
e^{-v} = \frac14 |\Omega|^2 \sim \frac{\psi_{\lambda_2 \lambda_2}}{2\lambda_1}, \quad \rho=e^{\frac u2}=\frac{e^{\frac v2}}{x_1^2}\sim \frac1{x_1^2}\sqrt{\frac{2\lambda_1}{\psi_{\lambda_2 \lambda_2}}}. \label{eq:50}
\end{equation}
Choosing the standard orientation of $P$ such that if $E$ and $E'$ are successive edges in the trigonometric order, then $(\lambda_E,\lambda_{E'})$ is oriented, and assuming that $\lambda_2$ was chosen so that $(\lambda_1,\lambda_2)$ is oriented, we deduce
\begin{equation}\label{eq:51}
  \begin{split}
    d\rho &\sim \frac{d\lambda_1}{x_1^2\sqrt{2\lambda_1\psi_{\lambda_2 \lambda_2}}} + \sqrt{2\lambda_1} \big( \frac1{x_1^2\sqrt{\psi_{\lambda_2 \lambda_2}}} \big)_{\lambda_2} d\lambda_2 , \\
    dz = - *_2 d\rho &\sim -\frac{d\lambda_2}{x_1^2} + \frac1{\sqrt{\psi_{\lambda_2 \lambda_2}}} \big( \frac1{x_1^2\sqrt{\psi_{\lambda_2 \lambda_2}}} \big)_{\lambda_2} d\lambda_1.
  \end{split}
\end{equation}
The Hodge star in the second equation is the Hodge operator for the metric $\sum \psi_{\lambda_i \lambda_j}d\lambda_id\lambda_j$ induced on $P$. It follows from this formula that the function $z$ attains finite values on $E$. Since along the boundary $\partial P$ we have
\begin{equation}
\frac{\partial z}{\partial \lambda_2}=-x_1^{-2}<0,\label{eq:52}
\end{equation}
we deduce that $z$ is monotone on $\partial P\setminus E_\infty$, where $E_\infty$ is the edge at infinity ($x_1=0$). Actually if we go along $\partial P\setminus E_\infty$ in the trigonometric sense, the function $z$ is increasing; moreover when we reach $E_\infty$ we see that $z\rightarrow \pm \infty$, so $z$ is actually increasing from $-\infty$ to $+\infty$ on $\partial P\setminus E_\infty$. 

The equation (\ref{eq:50}) says that $\rho|_E=0$. From (\ref{eq:36}) we have $\rho U_\rho = \frac 2{x_1}$ and therefore near $E$ we have
\begin{equation}
  \label{eq:53}
  U \sim \frac1{x_1|_E} \log \rho^2. 
\end{equation}
From (\ref{eq:52}) it follows that $\frac1{x_1|_E}$ is actually an affine function of $z|_E$. It follows that there is a continuous piecewise linear function $f:\Bbb{R}\rightarrow \Bbb{R}_+^*$, which is the same as $\frac1{x_1}$ if we see $z$ as a parameter of $\partial P$. Near each edge one has
\begin{equation}
  \label{eq:54}
  U \sim f(z) \log \rho^2 .
\end{equation}
We will prove later the crucial observation that the slopes actually increase from $-1$ to $+1$. In particular the function $f$ is convex.

% Observe that the slopes of $f$ are $\frac{\partial x_1^{-1}}{\partial z}=\frac{\partial x_1}{\partial \lambda_2}$ which are increasing because the polytope is convex. It follows that $f$ is a convex piecewise linear function.
% Moreover we will see in section \ref{sec:case-edge-at} that the slopes of $f$ on the two segments going to infinity are $\pm1$, so that the slopes actually increase from $-1$ to $+1$.

% Moreover, if $E$ is an edge going to infinity, we can also replace $\lambda^2$ by $\pm x^1$, depending on the fact that $(\lambda_E,\pm x^1)$ is direct \textbf{and determinant 1 ??? [c'est le même déterminant pour les deux arêtes allant à l'infini, mais peut-on supposer que c'est 1 ? faire intervenir la fameuse constante $k$ ?]}. It follows that the slopes of $f$ when $z\rightarrow \pm \infty$ are $\pm 1$.

\subsection{The case of a finite vertex}
\label{sec:case-finite-vertex}

This is similar to the previous case: if we have a vertex $E\cap E'$, where $(E,E')$ are in trigonometric order, then we can take $\lambda_1=\lambda_E$ as above and $\lambda_2=\lambda_{E'}$. Then near the vertex one has $\psi=\frac12 \sum_{i = 1} ^2 \lambda_i \log \lambda_i + $smooth function. We have the same formulas as above except that now $\psi_{\lambda_2 \lambda_2}=\frac1{2\lambda_2}+$smooth function.  Then (\ref{eq:50}) and (\ref{eq:51}) give
\begin{equation}
  \label{eq:55}
  \rho \sim \frac{2\sqrt{\lambda_1\lambda_2}}{x_1^2}, \quad
  dz \sim \frac{d\lambda_1-d\lambda_2}{x_1^2}.
\end{equation}
In particular the behaviour of $z$ and $U$ near the vertex remains the same as described above. Together with the previous case, it implies that the asymptotic of $U$ in (\ref{eq:54}) is global near $\{\rho=0\}=\partial P\setminus E_\infty$.

\subsection{The case of the edge at infinity}
\label{sec:case-edge-at}

Here we choose coordinates $(\lambda_1=x_1,\lambda_2)$, where $\lambda_2$ can be chosen to be $\lambda_2=\lambda_E$ for the edge $E$ going to infinity such that $(x_1,\lambda_2)$ is a direct basis. We actually choose $x_1$ normalized so that $(x_1,\lambda_2)$ is a basis of determinant $1$. We claim that the symplectic potential $\psi$ satisfies
\begin{equation}
  \label{eq:121}
  \psi(x_1,\lambda_2) = - \log x_1 + \varpi(\lambda_2) + \zeta
\end{equation}
where $\zeta$ is a function which vanishes on $x_1=0$, such that the second derivatives $f=x_1^2\zeta_{x_1x_1}, x_1\zeta_{x_1\lambda_2}$ or $\zeta_{\lambda_2\lambda_2}$ all satisfy
\begin{equation}
  \label{eq:122}
  (x_1\partial_{x_1})^{k_1}\partial_{\lambda_2}^{k_2}f = O(x_1) \text{ for all } k_1, k_2\geq0.
\end{equation}
We will justify these conditions below, but we first use them to establish the behaviour of $U$ at infinity.

% We write $\psi=-\log x_1+\varpi$ with $\varpi$ smooth (maybe up to some term $a x_1 \log x_1$). Actually what we know a priori is weaker and our calculations are justified below in remark \ref{rem:Poincare-behaviour}.
It follows from (\ref{eq:121})--(\ref{eq:122}) that (defining $\varpi^{\lambda_2 \lambda_2}=\varpi_{\lambda_2 \lambda_2}^{-1}$):
\begin{equation}
  \label{eq:56}
  \sum _{i, j = 1} ^2  \psi_{\lambda_i \lambda_j}d\lambda_i d\lambda_j \sim \frac{dx_1^2}{x_1^2} + \varpi_{\lambda_2 \lambda_2} d\lambda_2^2,
  \qquad \rho \sim \frac{\sqrt{\varpi^{\lambda_2 \lambda_2}}}{x_1}.
\end{equation}
At this point, we note that because of equation (\ref{eq:12}) and Abreu's formula (\ref{eq:15}), we must have $\sum _{i, j = 1}^2 \psi^{\lambda_i \lambda_j}_{\lambda_i \lambda_j}=0$ along $x_1=0$, which translates into $\varpi^{\lambda_2 \lambda_2}_{\lambda_2 \lambda_2}=-2$. We have two edges $E$ and $E'$ going to infinity, and it follows from the description of the behavior at $E\cap E_\infty$ and $E'\cap E_\infty$ that we must have $\varpi^{\lambda_2 \lambda_2}=-\lambda_E^2+2 \lambda_E=-\lambda_{E'}^2+2 \lambda_{E'}$ along $E_\infty$, which implies that $\lambda_{E'}=2-\lambda_E$ along $E_\infty$ and that $E_\infty$ has length $2$ for $\lambda_E$ (or $\lambda_{E'}$). At the end we deduce that, up to some global affine function, we have
\begin{equation}
  \label{eq:57}
  \varpi(\lambda_2) = \frac12 \big( \lambda_2 \log \lambda_2 + (2-\lambda_2) \log (2-\lambda_2) \big).
\end{equation}

Coming back to the function $\rho$, we have
\begin{equation}
  \label{eq:58}
  d\rho \sim -\sqrt{\varpi^{\lambda_2 \lambda_2}} \frac{dx_1}{x_1^2}+ \frac{\varpi^{\lambda_2 \lambda_2}_{\lambda_2}}{2\sqrt{\varpi^{\lambda_2 \lambda_2}}} \frac{d\lambda_2}{x_1}, \quad
  dz \sim \frac{d\lambda_2}{x_1}+ \frac{\varpi^{\lambda_2 \lambda_2}_{\lambda_2}}2 \frac{dx_1}{x_1^2}.
\end{equation}
Observe that our equation $\varpi^{\lambda_2 \lambda_2}_{\lambda_2 \lambda_2}=-2$ is indeed the compatibility condition for the asymptotic terms of $dz$. Since $\varpi^{\lambda_2 \lambda_2}=\lambda_2(2-\lambda_2)$ on $E_\infty$ and $\varpi^{\lambda_2 \lambda_2}_{\lambda_2}=2(1-\lambda_2)$, we get  from (\ref{eq:56}) and (\ref{eq:58}) the asymptotics when $x_1\rightarrow 0$:
\begin{equation}
  \label{eq:59}
  \rho \sim \frac{\sqrt{\lambda_2(2-\lambda_2)}}{x_1}, \qquad z \sim \frac{\lambda_2-1}{x_1}.
\end{equation}
We obtain $R:=\sqrt{\rho^2+z^2}\sim x_1^{-1}$ and $\lambda_2\sim\frac{R+z}{R}$, and therefore from (\ref{eq:36})
\begin{equation}
  \label{eq:60}
  \begin{split}
    \rho U_\rho &= \frac{2}{x_1} \sim 2R \\
    U_z &= -2 \psi_{\lambda_2} \sim -2 \varpi_{\lambda_2} \sim \log \frac{2-\lambda_2}{\lambda_2} \sim \log \frac{R-z}{R+z}. 
  \end{split}
\end{equation}
Therefore we find the asymptotics of $U$ at infinity, that is when $R\rightarrow +\infty$ (which is the same as $x_1\rightarrow 0$):
\begin{equation}
  \label{eq:61}
  U \sim U_0(\rho,z):= 2R + z \log \frac{R-z}{R+z}. 
\end{equation}

Note that on the edge $E$ we have $\lambda_2=0$ and $z\rightarrow -\infty$, so we deduce $U \sim - z \log \rho^2$ when $\rho\rightarrow0$ and $R\rightarrow \infty$; since we have (\ref{eq:54}) when $\rho\rightarrow0$ with $f$ piecewise linear, we deduce that the slope of $f$ on the segment going to $-\infty$ is $-1$. Similarly, on the other edge $E'$ going to infinity we have $z\rightarrow +\infty$ and $U \sim z \log \rho^2$ when $\rho\rightarrow0$ and $R\rightarrow \infty$, therefore the slope of $f$ on the segment going to $+\infty$ is $+1$.

%\begin{rem}\label{rem:Poincare-behaviour}

 We now justify the behaviour (\ref{eq:121})--(\ref{eq:122}). Since it seems that there is no regularity theory in the literature for extremal Kähler metrics with Poincaré behaviour, we instead obtain it directly from the definition (\ref{eq:1}) of an ALF metric. We have $g_K=x_1^2 g$ with $x_1=r^{-1}(1+O(r^{-1}))$ and the corresponding bounds on the derivatives. Therefore
  \begin{equation}
   g_K = \frac{dx_1^2}{x_1^2} + x_1^2 \eta^2 + \gamma + h, \quad |\nabla^k h|_{g_K} = O(x_1).\label{eq:62}
 \end{equation}
  We deduce the bounds on the Hessian of $\psi$:
  \begin{equation}
   \sum_{i,j=1}^2 \psi_{\lambda_i\lambda_j}d\lambda_id\lambda_j = \frac{dx_1^2}{x_1^2}+\varpi_{\lambda_2\lambda_2}d\lambda_2^2 + \sum_{ij=1}^2 A_{i,j} d\lambda_i d\lambda_j,\label{eq:63}
 \end{equation}
 where $\varpi(\lambda_2)$ is the symplectic potential of a spherical metric (that is $\varpi^{\lambda_2\lambda_2}_{\lambda_2\lambda_2}=-2$, an equation that we have already seen from another viewpoint), and $x_1^2A_{11}$, $x_1A_{12}$ and $A_{22}$ all satisfy $(x_1\partial_{x_1})^{k_1}\partial_{\lambda_2}^{k_2} f=O(x_1)$ for all $k_1,k_2\geq0$.

From the Hessian we deduce the symplectic potential itself: $\psi=-\log x_1+\varpi(\lambda_2)+\zeta$ with $x_1^2\zeta_{x_1x_1}, x_1\zeta_{x_1\lambda_2}, \zeta_{\lambda_2\lambda_2}=O(x_1)$; and the same bound for all derivatives of type $(x_1\partial_{x_1})^{k_1} \partial_{\lambda_2}^{k_2}$, which proves the claim. It is plausible that one can obtain a better regularity from the extremal equation, probably an asymptotic development in the variable $x_1$; in our case (with the additional Bach equation) no regularity theory is necessary since at the end the metrics will be explicit.

  % of these three functions satisfy the same bounds, so finally $(x_1\partial_{x_1})^{k_1} \partial_{\lambda_2}^{k_2}\zeta=O(x_1)$ if $k_2\geq2$, $O(1)$ if $k_2=1$ or $O(x_1^{-1})$ if $k_2=0$.

%\end{rem}

\subsection{The uniformization}
\label{sec:uniformization}

From their definition, the functions $(\rho,z)$ uniformize $\sum \psi_{\lambda_i \lambda_j}d\lambda_id\lambda_j$. We have the following global statement:
\begin{prop}\label{prop:unif}
  The functions $(\rho,z)$ give a diffeomorphism of the interior of $P$ with the upper half plane.
\end{prop}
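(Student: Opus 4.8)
The plan is to view $(\rho,z)$ as an isothermal coordinate system and to identify the map $\Phi:=z+i\rho$ as a holomorphic map from $P^\circ$ (the interior of $P$) to the upper half-plane $\Bbb H=\{\rho>0\}$, and then to prove that $\Phi$ is a biholomorphism by a boundary and degree argument. First I would record that relation \eqref{eq:51}, namely $dz=-*_2 d\rho$, expresses precisely that $\rho$ and $z$ are conjugate harmonic functions for the momentum metric $\sum\psi_{\lambda_i \lambda_j}d\lambda_id\lambda_j$; since $(\rho,z)$ uniformize this metric, with the trigonometric orientation fixed above the map $\Phi$ is holomorphic and orientation preserving, and conformal from the momentum metric to $d\rho^2+dz^2$. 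Because $\rho=e^{u/2}>0$ throughout the interior, the image lies in $\Bbb H$, and since $P$ is convex, $P^\circ$ is simply connected. Thus $\Phi$ is a holomorphic map from a simply connected Riemann surface to $\Bbb H$.

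Next I would assemble the boundary correspondence already obtained in the previous subsections. On the finite part $\partial P\setminus E_\infty$ one has $\rho\to0$ by \eqref{eq:50}, while \eqref{eq:52} shows that $z$ is real and strictly increasing along $\partial P\setminus E_\infty$ in the trigonometric sense, sweeping all of $(-\infty,+\infty)$; hence the finite boundary is sent homeomorphically onto the real axis $\partial\Bbb H$, traversed from $-\infty$ to $+\infty$. On the edge at infinity the asymptotics \eqref{eq:59} give $R=\sqrt{\rho^2+z^2}\sim x_1^{-1}\to\infty$, with the angle governed by $z/\rho=(\lambda_2-1)/\sqrt{\lambda_2(2-\lambda_2)}$, which ranges over all of $\Bbb R$ as $\lambda_2$ runs over $(0,2)$; so $E_\infty$ is sent to the arc at infinity of $\overline{\Bbb H}$, closing the image of $\partial P^\circ$ into a single loop that runs once along $\Bbb R$ and returns over the top through infinity. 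Because $\Phi$ is orientation preserving this loop is described counterclockwise, so it has winding number $+1$ about every point of $\Bbb H$ and $0$ about every point of the lower half-plane.

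From here I would conclude as follows. The boundary behaviour above shows that $\Phi$ is proper: for a compact $K\Subset\Bbb H$ one has $0<c\le\rho\le C$ and $|z|\le C$ on $K$, and the three regimes (finite edges force $\rho$ small, $E_\infty$ forces $\rho$ large, and each corner $E\cap E_\infty$ forces $|z|\to\infty$ by \eqref{eq:55} and \eqref{eq:59}) confine $\Phi^{-1}(K)$ to a compact subset of $P^\circ$. A proper holomorphic map is a branched covering of some finite degree $d$, and since $\Bbb H$ is simply connected it is in fact an unbranched $d$-sheeted cover away from its isolated critical values. Applying the argument principle on an exhaustion of $P^\circ$, the number of preimages of any $w_0\in\Bbb H$, counted with multiplicity, equals the winding number of the boundary loop about $w_0$, which is $1$. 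Hence $d=1$: the map $\Phi$ assumes every value of $\Bbb H$ exactly once and has no critical point, so it is a biholomorphism, and in particular the claimed diffeomorphism.

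The step I expect to be the main obstacle is the properness together with the degree count, and within it the behaviour at the corners $E\cap E_\infty$, where $\rho$ and $z$ blow up simultaneously. There one must verify, using the precise asymptotics \eqref{eq:55}, \eqref{eq:59} rather than the mere qualitative vanishing of $\rho$, that the image curve has no spurious backtracking and closes up at infinity with winding number exactly $1$; this is what ultimately forces $d=1$. Once properness and this count are in hand, the identification of $\Phi$ as a degree-one branched cover, hence a biholomorphism, follows from routine covering-space theory.
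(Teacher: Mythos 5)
Your proof is correct, and it takes a genuinely different route from the paper's. The paper also starts from the fact that $\varphi=z+i\rho$ is holomorphic for the conformal structure of the momentum metric, but instead of arguing directly on the interior of $P$ it composes with an abstract uniformization of that interior by $\Bbb{H}$; since $\rho\to0$ on the finite boundary, $\varphi$ is real on the real axis and extends to an entire function by the Schwarz reflection principle, and the asymptotics (\ref{eq:64}) near $E_\infty$ show that $|\varphi|=R$ grows linearly in the uniformizing coordinate, so $\varphi$ is affine --- hence a biholomorphism. Your argument replaces reflection-plus-Liouville by properness plus a degree and winding-number count. Both proofs rest on exactly the same boundary analysis from the preceding subsections: $\rho\to0$ with $z$ strictly increasing from $-\infty$ to $+\infty$ along $\partial P\setminus E_\infty$ (from (\ref{eq:50})--(\ref{eq:52}) and (\ref{eq:55})), and the asymptotics (\ref{eq:59}) near $E_\infty$. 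The trade-off is this: the paper's route needs no corner bookkeeping at $E\cap E_\infty$, because the reflection and the growth estimate are insensitive to how the two boundary regimes are glued, at the cost of invoking the uniformization theorem (with boundary behaviour) for the induced metric; your route avoids uniformization and reflection altogether and is more elementary and self-contained, but the weight falls precisely where you flagged it: at the corners, where (\ref{eq:59}) gives $z\sim(\lambda_2-1)/x_1\to\mp\infty$ while $\rho$ can stay bounded, so that preimages of compact sets in $\Bbb{H}$ avoid the corner regions (properness), and the image of an exhausting boundary loop --- a near-real-axis piece traversed left to right, a half-turn at radius $\sim 1/x_1$ along $E_\infty$, joined by arcs confined to $\{z<-A\}$ and $\{z>A\}$ where the direction angle seen from a fixed $w_0$ cannot complete a turn --- closes up with winding number exactly $+1$. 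Since the equivalents of sections \ref{sec:case-finite-edge}--\ref{sec:case-edge-at} hold locally uniformly (the paper notes they can be differentiated and are justified by the expansion of $\psi$), this bookkeeping goes through, and your proof is complete.
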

\begin{proof}
  The metric $\gamma$ induced on the polytope $P$ can be uniformized by the upper half-plane $\Bbb{H}$. From the very definition of $z$ by (\ref{eq:51}), the function $\varphi=z+i\rho$ is a holomorphic function on $\Bbb{H}$, with real values on the real axis. By Schwarz reflection principle, the function $\varphi$ extends as a holomorphic function on $\Bbb{C}$. The point is to prove that it is an affine function.

  The behaviour of $\gamma$ at infinity (that is near $E_\infty$) is given by (\ref{eq:56}), with $\varpi(\lambda_2)=\frac1{\lambda_2(2-\lambda_2)}$. Taking $\sin \theta=\lambda_2-1$ and $\xi=x_1^{-1}$, we therefore have
  \begin{equation}
   \sum _{i, j = 1} ^2 \psi_{ij}d\lambda_id\lambda_j
    \sim   \frac{d\xi^2+ \xi^2 d\theta^2}{\xi^2}.\label{eq:64}
  \end{equation}
This is conformal to the hyperbolic metric $ \frac{d\xi^2+ \xi^2 d\theta^2}{\xi^2\cos^2\theta}$, with $\xi$ being the radius in the upper half plane. Since $|\varphi|=R \sim \xi$, the function $\varphi$ has linear growth at infinity, and is therefore affine.
\end{proof}

\subsection{The convexity of $f$}
\label{sec:convexity-f}

\begin{lem}\label{lem:sign-U}
  Suppose that $U$ is a solution of equation (\ref{eq:35}) on the upper half-plane $\rho>0$.
  \begin{itemize}
  \item If $U_\rho>0$ when $\rho\rightarrow 0$ and when $R=\sqrt{\rho^2+z^2}\rightarrow \infty$, then $U_\rho>0$ everywhere.
  \item If $U_{zz}<0$ when $\rho\rightarrow 0$ and when $R=\sqrt{\rho^2+z^2}\rightarrow \infty$, then $U_{zz}<0$ everywhere.
  \end{itemize}
\end{lem}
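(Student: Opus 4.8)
The plan is to read both statements as maximum-principle assertions for the axisymmetric Laplace operator
$$\mathcal{L} := \partial_\rho^2 + \tfrac1\rho\partial_\rho + \partial_z^2,$$
which by (\ref{eq:35}) annihilates $U$ and is exactly the flat Laplacian of $\Bbb{R}^3$ acting on axisymmetric functions. The two bullets differ only in which auxiliary quantity solves an equation, so I would treat them in turn, the second being the easier one.

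For the second bullet I would simply differentiate (\ref{eq:35}) twice in $z$. Since the coefficients of $\mathcal{L}$ do not depend on $z$, the derivative $\partial_z$ commutes with $\mathcal{L}$, so $P := U_{zz}$ again solves $\mathcal{L}P = 0$; that is, $U_{zz}$ is itself an axisymmetric harmonic function. The sign hypotheses say $P < 0$ on a neighbourhood of the axis $\{\rho=0\}$ and on a neighbourhood of infinity, i.e. near the whole boundary of the half-plane $\{\rho>0\}$. Hence $\{P\ge 0\}$ is a compact subset of the open half-plane, bounded away from the axis, on which $\mathcal{L}$ is uniformly elliptic with smooth coefficients. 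If it were nonempty, $P$ would attain a nonnegative interior maximum on a bounded region whose boundary already lies in $\{P<0\}$, which the strong maximum principle for harmonic functions forbids unless $P$ is constant — impossible since $P<0$ nearby. Therefore $U_{zz}<0$ everywhere.

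For the first bullet the quantity $W := U_\rho$ no longer solves the homogeneous equation; differentiating (\ref{eq:35}) once in $\rho$ gives
$$\mathcal{L}W = W_{\rho\rho} + \tfrac1\rho W_\rho + W_{zz} = \tfrac1{\rho^2}\,W,$$
so $W$ solves the elliptic equation $(\mathcal{L}-\rho^{-2})W=0$, whose zeroth-order coefficient $-\rho^{-2}$ has precisely the sign favourable to the minimum principle. Concretely, at an interior minimum $p$ of $W$ with $W(p)<0$ one has $W_\rho(p)=W_z(p)=0$ and $W_{\rho\rho}(p),W_{zz}(p)\ge 0$, so $\mathcal{L}W(p)\ge 0$ while $\rho^{-2}W(p)<0$, contradicting the equation. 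Thus $W$ has no negative interior minimum. As before the hypotheses force $W>0$ near both the axis and infinity, so $\{W\le 0\}$ is compact and interior, and the exclusion of a negative interior minimum, together with the strong maximum principle applied to $-W$ (again using $-\rho^{-2}\le 0$), upgrades this to $W=U_\rho>0$ throughout $\{\rho>0\}$.

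The only genuine subtlety, and the step I would write most carefully, is the passage from ``no bad interior extremum'' to a global sign on the unbounded half-plane $\{\rho>0\}$, whose boundary consists of the axis $\{\rho=0\}$ — where $\mathcal{L}$ degenerates — together with the point at infinity. This is handled by the localization just sketched: the sign hypotheses as $\rho\to0$ and as $R\to\infty$ confine any putative violation set to a compact region of the open half-plane, on which $\mathcal{L}$ (respectively $\mathcal{L}-\rho^{-2}$) is a bona fide uniformly elliptic operator with bounded smooth coefficients, reducing everything to the classical weak and strong maximum principles on a bounded domain whose boundary already carries the correct sign.
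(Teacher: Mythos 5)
Your proof is correct and follows essentially the same route as the paper: for $U_{zz}$ you use that it satisfies the same axisymmetric Laplace equation (\ref{eq:35}), and for $U_\rho$ you derive the identical equation $(U_\rho)_{\rho\rho}+\tfrac1\rho(U_\rho)_\rho+(U_\rho)_{zz}-\tfrac1{\rho^2}U_\rho=0$ that the paper writes down, and in both cases conclude by the maximum principle. The paper's proof is just a two-line version of yours; your added localization argument (compactness of the bad set forced by the sign hypotheses near the axis and at infinity) and the treatment of the zero-minimum case are exactly the details the paper leaves implicit.
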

\begin{proof}
  Since $U_{zz}$ satisfies the same equation (\ref{eq:35}), the second statement is an immediate application of the maximum principle. For the first statement observe that $U_\rho$ satisfies $(U_\rho)_{\rho\rho} + \frac1\rho(U_\rho)_\rho + (U_\rho)_{zz}-\frac1{\rho^2}U_\rho=0$, and the maximum principle can also be applied to this equation.
\end{proof}

Now come back to our setting of a toric extremal Kähler metric $g_K$, with polytope $P$, and conformal to a Ricci flat ALF metric $g$. From proposition \ref{prop:unif} we have a global harmonic axisymmetric function $U$ on $\Bbb{R}^3$. The asymptotic (\ref{eq:54}) near $\partial P\setminus E_\infty$ implies that $U_\rho>0$ when $\rho\rightarrow0$. For the function $V$ in (\ref{eq:43}) to be positive we therefore must have $U_{zz} < 0$ when $\rho\rightarrow0$. On the other hand, when $R=\sqrt{\rho^2+z^2}\rightarrow\infty$, the asymptotic function $U_0(\rho,z)$ given (\ref{eq:61}) also has $U_\rho>0$ and $U_{zz}<0$. Therefore we deduce from lemma \ref{lem:sign-U} that $U_\rho>0$ and $U_{zz}<0$ everywhere. In particular for each $\rho$ the function $z\mapsto U(\rho,z)$ is concave, and at the limit $\rho\rightarrow0$ the limit $f(z)=\lim_{\rho\rightarrow0}\frac{U(\rho,z)}{\log \rho^2}$ is convex. Together with the calculations of the two limit slopes in section \ref{sec:case-edge-at} this gives:
\begin{cor}\label{cor:f-convex}
  For a toric extremal Kähler metric with polytope $P$, conformal to an ALF Ricci flat metric $g$, the piecewise linear function $f(z)$ given by (\ref{eq:54}) is convex, and has its slopes increasing from $-1$ to $+1$.\qed
\end{cor}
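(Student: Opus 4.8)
The plan is to upgrade the sign information about the derivatives of $U$ from the boundary $\{\rho=0\}=\partial P\setminus E_\infty$ and from the end $R\to\infty$ to the whole upper half-plane, using the maximum principle packaged in Lemma~\ref{lem:sign-U}, and then to extract the convexity of $f$ by dividing the boundary asymptotic (\ref{eq:54}) by the \emph{negative} factor $\log\rho^2$.

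First I would record the two sign hypotheses needed by Lemma~\ref{lem:sign-U} along $\rho\to0$. The asymptotic (\ref{eq:54}) reads $U\sim f(z)\log\rho^2$ with $f>0$, so $U_\rho\sim 2f(z)/\rho>0$ as $\rho\to0$, which gives the first hypothesis. The sign of $U_{zz}$ at the boundary is less transparent, and I expect this to be the main obstacle: I would read it off from the requirement that (\ref{eq:41}) be a genuine Riemannian metric, i.e. that $V>0$. Examining (\ref{eq:43}) as $\rho\to0$, where $\rho U_\rho\to 2f$ stays finite while $U_\rho^2$ blows up, positivity of $V$ forces $U_{zz}<0$ there, which is the second hypothesis.

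Next I would treat the end $R\to\infty$, where $U$ is asymptotic to the explicit $U_0$ of (\ref{eq:61}). A direct differentiation, consistent with $\rho U_\rho\sim 2R$ in (\ref{eq:60}), gives $U_{0,\rho}=2R/\rho>0$, and a similar computation yields $U_{0,zz}<0$; so both hypotheses hold at the end as well. Applying both parts of Lemma~\ref{lem:sign-U} — to $U$ itself and to $U_{zz}$, which solves the same equation (\ref{eq:35}) — then propagates the inequalities to the whole upper half-plane, giving $U_\rho>0$ and $U_{zz}<0$ everywhere.

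Finally I would deduce the convexity statement. The inequality $U_{zz}<0$ says that $z\mapsto U(\rho,z)$ is concave for each fixed $\rho>0$. For $\rho$ small one has $\log\rho^2<0$, so dividing reverses the concavity and makes $z\mapsto U(\rho,z)/\log\rho^2$ convex; this sign reversal is the crux. Letting $\rho\to0$, the limit $f(z)=\lim_{\rho\to0}U(\rho,z)/\log\rho^2$ is a pointwise limit of convex functions, hence convex. Combined with the two outer-slope computations of section~\ref{sec:case-edge-at}, which pin the leftmost slope to $-1$ and the rightmost to $+1$, convexity forces the slopes of the piecewise linear $f$ to increase monotonically from $-1$ to $+1$, as claimed.
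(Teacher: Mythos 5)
Your proposal is correct and follows essentially the same route as the paper: boundary signs $U_\rho>0$ from (\ref{eq:54}) and $U_{zz}<0$ forced by positivity of $V$ in (\ref{eq:43}), the same signs for the explicit $U_0$ of (\ref{eq:61}) at infinity, propagation by the maximum principle of Lemma \ref{lem:sign-U}, and then convexity of $f=\lim_{\rho\to0}U/\log\rho^2$ as a limit of convex functions together with the outer slopes $\pm1$ from section \ref{sec:case-edge-at}. Your explicit remark that dividing the concave function $z\mapsto U(\rho,z)$ by the negative quantity $\log\rho^2$ is what produces convexity is exactly the (tacit) crux of the paper's argument.
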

\begin{rem}\label{rem:angles}
  It is useful to note that the whole section extends to the case where the metric has conical singularities around the fixed point sets. This is because we then have exactly the same asymptotic behaviours with the equations $\lambda_E$ replaced by $\frac{\lambda_E}{\alpha_E}$, as explained in section \ref{sec:toric-kahl-geom}. 
\end{rem}

\section{The solutions}
\label{sec:the-solutions}

\subsection{The Taub-NUT generating function}
\label{sec:kerr-taub-bolt}

A significant role will be played by the following family of harmonic axisymmetric function parametrized by $n \geq 0$, namely, with $R := \sqrt{\rho ^2 + z ^2}$:
\begin{equation}
  \label{eq:65}
  U_n(\rho,z) = 2R - 2z \log \frac{R+z}{\rho} + 2n \log \rho^2.
\end{equation}
Notice that, when $\rho\rightarrow0$,  we have the behaviour (\ref{eq:54}) for the piecewise linear function
\begin{equation}
  \label{eq:66}
  f_n(z) = 2n + |z|.
\end{equation}
For $n > 0$, $U_n$ is actually a generating function of the (self-dual) Taub-NUT metric on parameter $n$. Using corollary \ref{cor:harmarck} we can reconstruct the metric. We normalize $k=4n$: this will be understood in section \ref{sec:regularity-criterion}, see (\ref{eq:86}). One calculates $H(\rho,z)=z(R+4n)+\rho^2 \log \frac{R+z}{\rho}$, then $V=e^{2\nu}=1+\frac{2n}R$ and $F = 2n \frac zR$. The formula (\ref{eq:41}) then gives
\begin{equation}
  \label{eq:67}
  g_n = \big( 1+\frac{2n}R \big) \big( d\rho^2+dz^2+\rho^2 dx_3^2 \big)
       + \frac1{1+\frac{2n}R} \big(dt - 2n \frac zR dx_3 \big)^2 .
\end{equation}
This is the Taub-NUT metric where the usual coordinates $(R,\theta)$ are related to $(\rho,z)$ by $z=R\cos \theta$ and $\rho=R\sin \theta$. When the parameter $n$ varies all these metrics are homothetic, so in this family there is only one metric up to scale. The function  $U_0 (\rho,z) = 2R (\rho, z) - 2z \log \frac{R (\rho, z) +z}{\rho}$ was already obtained in (\ref{eq:61}) as giving the asymptotics of any $U$: it is no longer a generating function but will play a prominent part in the expression of the general solution in the next subsection. Notice that its  conjugate function is $H_0 (\rho, z) = z R (\rho, z) + \rho ^2 \log{(R (\rho, z) + z)} - \rho ^2 \log{\rho}$, up to additive constant.
\subsection{Superposition}
\label{sec:superposition}

Suppose that $U$ is an axisymmetric harmonic function which satisfies $U_\rho>0$ and $U_{zz}<0$ everywhere (by lemma \ref{lem:sign-U} it is sufficient to have this sign at the boundaries). In particular the condition $U_{\rho z}^2+U_{zz}^2>0$ of corollary \ref{cor:harmarck} is satisfied. The condition $V>0$ in the same corollary is more difficult to achieve, but the following proposition shows that it is preserved when we superpose solutions: 

\begin{prop}\label{prop:superposition}
  Suppose that we have two solutions $U_1$ and $U_2$ of (\ref{eq:35}) which both satisfy $U_{zz}<0$, $U_\rho>0$ and
  \begin{equation}
   \rho U_\rho + \frac{U_\rho^2U_{zz}}{U_{\rho z}^2+U_{zz}^2} < 0.\label{eq:68}
 \end{equation}
  Then the same inequality remains true for any barycenter $a_1U_1+a_2U_2$ ($a_1, a_2>0$, $a_1+a_2=1$).
\end{prop}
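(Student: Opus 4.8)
The plan is to reduce the statement to a pointwise fact about a single quadratic form. First I would clear denominators in (\ref{eq:68}): since $U_{zz}<0$ forces $U_{\rho z}^2+U_{zz}^2>0$ and $U_\rho>0$, multiplying (\ref{eq:68}) by $U_{\rho z}^2+U_{zz}^2$ and then dividing by $U_\rho$ shows that, under these sign conditions, (\ref{eq:68}) is equivalent to
\begin{equation*}
  P(U):=\rho\,(U_{\rho z}^2+U_{zz}^2)+U_\rho\,U_{zz}<0.
\end{equation*}
The decisive observation is that all the quantities entering $P$, namely $U_\rho$, $U_{\rho z}$, $U_{zz}$, depend \emph{linearly} on $U$. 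Hence at a fixed point $(\rho,z)$ the triple $(U_\rho,U_{\rho z},U_{zz})$ of the barycenter $a_1U_1+a_2U_2$ is exactly the barycenter of the corresponding triples for $U_1$ and $U_2$, and everything becomes a statement about the fixed quadratic form $P$ evaluated at points of $\Bbb{R}^3$.

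Next I would analyse $P$ as a quadratic form on $\Bbb{R}^3$ in the variables $(A,B,C)=(U_\rho,U_{\rho z},U_{zz})$, that is $P(A,B,C)=\rho B^2+\rho C^2+AC$ with $\rho>0$ fixed. The $B$-direction decouples and is positive, while the $(A,C)$-block has matrix $\left(\begin{smallmatrix}0&\frac12\\ \frac12&\rho\end{smallmatrix}\right)$ of determinant $-\frac14<0$, hence one positive and one negative eigenvalue. Therefore $P$ has Lorentzian signature $(2,1)$, and its negative cone $\{P<0\}$ is the interior of a genuine light cone, consisting of two convex nappes. On this cone one has $A\neq0$ (if $A=0$ then $P=\rho(B^2+C^2)\ge0$), so the hyperplane $\{A=0\}$ separates the two nappes and the sign of $A=U_\rho$ is constant on each of them.

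To conclude I would note that, by hypothesis, the data of both $U_1$ and $U_2$ satisfy $P<0$ and $A=U_\rho>0$, so both lie in the single nappe $\{P<0,\ A>0\}$. Each nappe of a Lorentzian cone is convex (in diagonalising coordinates $P=-t^2+x^2+y^2$ it is the region $\{t>\sqrt{x^2+y^2}\}$ above the graph of a convex function). Hence the barycenter's data, a convex combination with $a_1,a_2>0$, again lies in this nappe, giving $P(a_1U_1+a_2U_2)<0$ at every point; moreover $U_\rho>0$ and $U_{zz}<0$ are preserved by positive combinations, so the reverse reduction returns (\ref{eq:68}) for the barycenter.

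The heart of the matter, and the only real obstacle, is the signature computation that reveals the Lorentzian structure: once one knows that $\{P<0\}$ is a light-cone interior with convex nappes separated by $\{U_\rho=0\}$, convexity does the rest. As a purely computational alternative avoiding the geometric language, one can expand $P(a_1U_1+a_2U_2)=a_1^2P(U_1)+2a_1a_2\,P(U_1,U_2)+a_2^2P(U_2)$, where $P(\cdot,\cdot)$ denotes the polarisation of the quadratic form $P$, and establish the reverse Cauchy–Schwarz inequality $P(U_1,U_2)\le-\sqrt{P(U_1)P(U_2)}$ (valid precisely because $P$ is Lorentzian and both arguments are timelike in the same nappe, which fixes the sign of $P(U_1,U_2)$); substituting and completing the square then yields $P(a_1U_1+a_2U_2)\le-\big(a_1\sqrt{-P(U_1)}+a_2\sqrt{-P(U_2)}\big)^2<0$ since $a_1,a_2>0$.
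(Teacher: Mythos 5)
Your proof is correct. It shares its backbone with the paper's: clearing denominators in (\ref{eq:68}) using $U_\rho>0$ and $U_{\rho z}^2+U_{zz}^2>0$ is exactly how the paper reaches its inequality (\ref{eq:69}) (your $P(U)<0$ is (\ref{eq:69}) multiplied by $\rho$), and both arguments then exploit the linearity of $(U_\rho,U_{\rho z},U_{zz})$ in $U$ together with a pointwise convexity property of a fixed quadratic form. The difference lies in how that convexity is justified. The paper's key lemma is elementary: if $\alpha_i^2\le\beta_i\gamma_i$ with $\beta_i,\gamma_i>0$ ($i=1,2$), then $(a_1\alpha_1+a_2\alpha_2)^2\le(a_1\beta_1+a_2\beta_2)(a_1\gamma_1+a_2\gamma_2)$; this is applied with $\alpha=U_{\rho z}$, $\beta=-U_{zz}$, $\gamma=\frac{U_\rho}{\rho}+U_{zz}$, the positivity of the $\gamma_i$ being forced by (\ref{eq:69}) itself rather than by a separate hypothesis. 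You instead compute the signature of $P$, recognize a Lorentzian form, and invoke convexity of the nappes of its timelike cone, with the hypothesis $U_\rho>0$ serving to place both solutions in the same nappe. These are two dressings of one fact --- the paper's lemma is precisely the statement that a nappe of $\{\alpha^2<\beta\gamma,\ \beta>0,\ \gamma>0\}$ is convex, and your reverse Cauchy--Schwarz alternative is its polarized form --- so neither route is more general; yours makes the geometric mechanism transparent (the sign hypotheses select a nappe, and strictness of the conclusion comes for free from your explicit bound $P(a_1U_1+a_2U_2)\le-\bigl(a_1\sqrt{-P(U_1)}+a_2\sqrt{-P(U_2)}\bigr)^2$, a point the paper leaves implicit), while the paper's is shorter and needs no diagonalization.
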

\begin{proof}
  The inequality to be proved for $U=a_1U_1+a_2U_2$ is
  \begin{equation}
   U_{\rho z}^2 < - U_{zz} \big( \frac{U_\rho}\rho + U_{zz} \big).\label{eq:69}
 \end{equation}
  One can check easily that if we have numbers $\alpha_i$, $\beta_i>0$, $\gamma_i>0$ ($i=1, 2$) such that $\alpha_i^2\leq\beta_i\gamma_i$, then one has $(a_1\alpha_1+a_2\alpha_2)^2\leq(a_1\beta_1+a_2\beta_2)(a_1\gamma_1+a_2\gamma_2)$. Applying to $U_{\rho z}$, $-U_{zz}$ and $\frac{U_\rho}\rho + U_{zz}$ we obtain the result.
\end{proof}

We can now describe all the solutions:
\begin{cor}\label{cor:f-U}
  Suppose that $f$ is a convex piecewise linear positive function on $\Bbb{R}$, with slopes $-1=f'_0<f'_1<\cdots<f'_r=1$, and angular points at $z_1<\cdots<z_r$. It follows that
  \begin{equation}
    \label{eq:70}
    f(z) = A + \sum_{i = 1} ^r a_i |z-z_i|
  \end{equation}
  for real numbers $a_i=\frac12(f'_i-f'_{i-1}) > 0$ satisfing $\sum_1^r a_i=1$.
  
Then the corresponding harmonic function
\begin{equation}
 U = A \log \rho^2 + \sum_{i = 1} ^r a_i U_0(\rho,z-z_i) \label{eq:71}
\end{equation}
satisfies the conditions $V>0$ and $U_{\rho z}^2+U_{zz}^2>0$ of corollary \ref{cor:harmarck} if and only if $A>0$.
\end{cor}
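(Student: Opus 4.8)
The plan is to verify that the function $U$ in (\ref{eq:71}) satisfies the two conditions of Corollary \ref{cor:harmarck} precisely when $A>0$, by reducing everything to the behaviour at the two boundaries ($\rho\to 0$ and $R\to\infty$) and then invoking Proposition \ref{prop:superposition}. First I would observe that $U$ as written is manifestly harmonic and axisymmetric: the term $A\log\rho^2$ solves (\ref{eq:35}), and each $U_0(\rho,z-z_i)$ is a translate of the harmonic function from (\ref{eq:61}), which we already know solves (\ref{eq:35}). The condition $U_{\rho z}^2+U_{zz}^2>0$ is the easy one; it follows because each $U_0$ contributes $U_{zz}<0$ (and $A\log\rho^2$ contributes nothing to $U_{zz}$), so the nondegeneracy holds automatically once at least one $a_i>0$, which is guaranteed since $\sum a_i=1$.

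The substance is the sign condition $V>0$, equivalently the inequality (\ref{eq:68}). The natural strategy is to exhibit $U$ as a barycenter of functions each satisfying (\ref{eq:68}) and then apply Proposition \ref{prop:superposition}. The building blocks are exactly the Taub-NUT generating functions $U_n$ of (\ref{eq:65}): note that $U_n(\rho,z)=U_0(\rho,z)+2n\log\rho^2$, and one computes from (\ref{eq:67}) that $U_n$ gives $V=1+\tfrac{2n}R>0$, so each $U_n$ satisfies (\ref{eq:68}) as long as $n>0$. The plan is therefore to rewrite (\ref{eq:71}) so that each summand pairs a translate $U_0(\rho,z-z_i)$ with a genuinely positive multiple of $\log\rho^2$, turning it into (a scaling of) some $U_{n}$ with $n>0$; the requirement $\sum a_i=1$ together with $A>0$ is exactly what allows us to distribute the $A\log\rho^2$ term among the $r$ pieces while keeping every piece a positive-parameter Taub-NUT function. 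Once $U$ is displayed as a convex combination $\sum_i a_i \tilde U_i$ with each $\tilde U_i$ a $U_{n_i}$, $n_i>0$ (up to the homogeneity of Remark \ref{rem:homogeneity}, which lets me translate and rescale without destroying (\ref{eq:68})), Proposition \ref{prop:superposition} delivers (\ref{eq:68}) for $U$ itself.

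For the converse, I would argue that $A>0$ is necessary by examining the behaviour of $V$ as $R\to\infty$ or near a boundary: if $A=0$ the leading term $A\log\rho^2$ disappears and $U$ becomes asymptotic to $U_0$ alone (the pure asymptotic model of (\ref{eq:61})), for which $V$ degenerates to the boundary value $0$ rather than staying strictly positive; if $A<0$ the contribution $A\log\rho^2$ drives $\rho U_\rho$ down and one checks that (\ref{eq:68}) fails in the regime $\rho\to 0$, where $\rho U_\rho\to 2f(z)$ and the sign of $V$ is governed by the sign of $A$ through the constant term. Concretely, since near $\rho=0$ we have $U\sim f(z)\log\rho^2$ with $f(z)=A+\sum a_i|z-z_i|$, the positivity $V>0$ forces $f>0$, and the minimum value of the convex piecewise linear $f$ is controlled by $A$, so $V>0$ uniformly is equivalent to $A>0$.

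The main obstacle I anticipate is the bookkeeping in the forward direction: writing $U$ as an \emph{honest} convex combination of positive-parameter Taub-NUT functions, rather than merely a positive linear combination, requires carefully allocating the constant $A>0$ across the $r$ terms so that each individual piece has a strictly positive $\log\rho^2$ coefficient and the barycentric weights sum to $1$. The condition $\sum_1^r a_i = 1$ from (\ref{eq:70}) is what makes this allocation possible, but one must check that no piece is starved of its share of $A$; since all $a_i>0$ and $A>0$, splitting $A=\sum_i A_i$ with each $A_i>0$ and forming $\tilde U_i = U_0(\rho,z-z_i)+\tfrac{A_i}{a_i}\log\rho^2$ (a translate of a Taub-NUT function with positive parameter) does the job, and this is the only delicate point; the inequalities themselves then follow formally from Proposition \ref{prop:superposition}.
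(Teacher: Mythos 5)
Your forward direction ($A>0$ implies the two conditions) is correct and is exactly the paper's own argument: you split $A\log\rho^2$ among the $r$ terms, recognize each piece $U_0(\rho,z-z_i)+\tfrac{A_i}{a_i}\log\rho^2$ as a translated Taub--NUT generating function $U_{n_i}(\rho,z-z_i)$ with $n_i=A_i/(2a_i)>0$, check that each $U_{n_i}$ satisfies $U_\rho>0$, $U_{zz}<0$ and (\ref{eq:68}) (equivalently $V=1+2n_i/R>0$), and apply Proposition \ref{prop:superposition} (iterated over the $r$ pieces, and using $\sum a_i=1$ to get an honest barycenter); the nondegeneracy $U_{\rho z}^2+U_{zz}^2>0$ is automatic from $U_{zz}<0$.

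The converse, however, contains a genuine error. Your concluding step is: near $\rho=0$ one has $U\sim f(z)\log\rho^2$, so ``the positivity $V>0$ forces $f>0$, and the minimum value of the convex piecewise linear $f$ is controlled by $A$, so $V>0$ uniformly is equivalent to $A>0$.'' This equivalence is false: $A$ is \emph{not} the minimum of $f$, and positivity of $f$ --- which is already a hypothesis of the corollary --- is strictly weaker than $A>0$. For instance $f(z)=-\tfrac12+\tfrac12|z+1|+\tfrac12|z-1|$ is positive everywhere (its minimum is $\tfrac12$), convex with slopes $-1<0<1$, yet $A=-\tfrac12$. So no contradiction can be extracted from positivity of $f$, and the failure of $V>0$ does not occur where $f$ is small. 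The actual mechanism (the one the paper's sketch alludes to) lives at large $|z|$: letting $\rho\to0$ in (\ref{eq:69}), with $\rho U_\rho\to 2f(z)$, $U_{\rho z}\sim 2f'(z)/\rho$ and $U_{zz}\to -2\sum_i a_i/|z-z_i|$, one gets the necessary inequality
\[
f'(z)^2\ \le\ f(z)\sum_{i=1}^r\frac{a_i}{|z-z_i|}\qquad\text{for all }z .
\]
Writing $f=A+g$ with $g(z)=\sum_i a_i|z-z_i|$, Cauchy--Schwarz gives $g'^2\le g\sum_i a_i/|z-z_i|$, and the deficit $g(z)-g'(z)^2\bigl(\sum_i a_i/|z-z_i|\bigr)^{-1}$ is nonnegative and tends to $0$ as $z\to\pm\infty$ (both $g$ and $\bigl(\sum_i a_i/|z-z_i|\bigr)^{-1}$ equal $|z|-|\sum_i a_iz_i|\cdot\sign z+O(1/|z|)$ there). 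Hence if $A<0$ the displayed inequality is violated for $|z|$ large --- at points where $f$ is large, not near its minimum. Separately, your treatment of the borderline case $A=0$ is also not a proof: the observation that ``$V$ degenerates to the boundary value $0$'' concerns the limit $R\to\infty$ and is not a pointwise violation of $V>0$ (for $r=1$ one does get $V\equiv 0$, since $U_0$ realizes equality in (\ref{eq:69}), but for $r\ge2$ a genuine argument is needed). To be fair, the paper itself leaves the details of the converse to the reader; but the specific reduction you propose --- deducing the necessity of $A>0$ from the positivity of $f$ --- is incorrect and would need to be replaced by the large-$|z|$ analysis above.
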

\begin{proof}
  If the condition $A>0$ is satisfied, then $U(\rho,z)$ is a barycenter of functions $U_{n_i}(\rho,z-z_i)$ with $n_i>0$ and the result follows from proposition \ref{prop:superposition}.

  Conversely, if $A\leq0$ one can show that the inequality (\ref{eq:69}) is not true. : one calculates easily all the terms of the inequality, and making $\rho\rightarrow0$ it would imply an inequality on the function $f(z)$ which is satisfied only for $A>0$. We leave the details of the calculation to the reader, since we will not really use it in our constructions.
\end{proof}

\subsection{Solutions}
\label{sec:solutions}

We can now summarize what we have done. Any extremal Kähler Bach flat metric on a toric manifold with polytope $P$ and scalar curvature vanishing on the edge at infinity, is generated via the ansatz of corollary \ref{cor:harmarck} from an axisymmetric harmonic function $U(\rho,z)$ with the boundary conditions:
\begin{equation}\label{eq:72}
  U \sim
  \begin{cases}
    f(z) \log \rho^2, & \rho \rightarrow 0, \\
    U_0(\rho,z), & R\rightarrow \infty.
  \end{cases}
\end{equation}
Here $f:\Bbb{R}\rightarrow \Bbb{R}_+^*$ is a convex piecewise linear function, with slopes $\pm1$ at $\pm\infty$, and the first equivalent is valid only locally with respect to $z$. Conversely:

\begin{prop}\label{prop:f-U}
  Given any convex, piecewise linear function $f:\Bbb{R}\rightarrow\Bbb{R}_+^*$ with slopes $\pm1$ at $\pm\infty$, and satisfying the condition $A>0$ when it is written under the form (\ref{eq:70}), there is an axisymmetric harmonic function $U$ on $\Bbb{R}^3$ satisfying (\ref{eq:72}), which is unique up to the addition of an affine function of $z$. This function generates a Hermitian toric Ricci flat ALF metric via corollary \ref{cor:harmarck} on $\{\rho>0\}$.
\end{prop}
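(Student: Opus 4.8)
The plan is to prove Proposition~\ref{prop:f-U} by directly constructing $U$ as the explicit superposition dictated by Corollary~\ref{cor:f-U}, and then verifying that this explicit function satisfies every requirement. Since $f$ is convex, piecewise linear, positive, with slopes $\pm1$ at $\pm\infty$, Corollary~\ref{cor:f-U} lets me write $f(z) = A + \sum_{i=1}^r a_i|z-z_i|$ with $A>0$, $a_i>0$, $\sum a_i=1$. I then set
\begin{equation}
  U = A \log \rho^2 + \sum_{i=1}^r a_i\, U_0(\rho, z-z_i),\label{eq:proposal-U}
\end{equation}
exactly as in \eqref{eq:71}. Each summand is harmonic and axisymmetric (the $U_0(\rho,z-z_i)$ are translates of the harmonic function from \eqref{eq:61}, and $\log\rho^2$ is harmonic on $\Bbb{R}^3\setminus\{\rho=0\}$), so $U$ is an axisymmetric harmonic function on the upper half-plane, giving existence.

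The main work is checking the two boundary conditions \eqref{eq:72}. For $\rho\to0$ I use the known expansion $U_0(\rho,z)\sim |z|\log\rho^2$ near the axis (which follows from \eqref{eq:61}: as $\rho\to0$ one has $R\to|z|$ and $\log\frac{R-z}{R+z}\to \mp\infty$ like $\log\rho^2$ on each side), so that locally in $z$ each $a_iU_0(\rho,z-z_i)\sim a_i|z-z_i|\log\rho^2$; summing with the $A\log\rho^2$ term recovers $U\sim f(z)\log\rho^2$. For $R\to\infty$ I argue that the leading behaviour is governed by a single $U_0$: since $U_0$ is homogeneous of degree $1$ up to the $\log$ factors, the translates $U_0(\rho,z-z_i)$ all share the same leading term $U_0(\rho,z)$ at infinity, and the $A\log\rho^2$ term is lower order; using $\sum a_i=1$ this yields $U\sim U_0(\rho,z)$. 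This also pins down the claimed uniqueness up to an affine function of $z$: any two solutions of \eqref{eq:72} differ by an axisymmetric harmonic function that is $O(1)$ relative to $\log\rho^2$ near the axis and $o(R)$ at infinity, hence (by Lemma~\ref{lem:sign-U}-type maximum principle arguments, or by inspecting the allowed growth) must be affine in $z$.

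Finally I must verify that $U$ generates a genuine ALF Ricci flat Hermitian toric metric via Corollary~\ref{cor:harmarck}, i.e.\ that the conditions $V>0$ and $U_{\rho z}^2+U_{zz}^2>0$ hold on $\{\rho>0\}$. The second is automatic from $U_\rho>0$, $U_{zz}<0$, which follow from the signs on each $U_0$ together with Lemma~\ref{lem:sign-U}. The condition $V>0$ is precisely where the hypothesis $A>0$ enters: as established in Corollary~\ref{cor:f-U}, under $A>0$ the function $U$ is a positive barycenter of the Taub-NUT generating functions $U_{n_i}(\rho,z-z_i)$ (with $n_i>0$), and Proposition~\ref{prop:superposition} guarantees that inequality \eqref{eq:68}, equivalently $V>0$, is preserved under such barycenters. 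The ALF behaviour at infinity is then read off from the fact that $U\sim U_0$ gives, through the formulas \eqref{eq:42}--\eqref{eq:44}, the flat model asymptotics of \eqref{eq:3}, matching the ALF definition. I expect the main obstacle to be the careful bookkeeping of the asymptotic expansions near the axis at the angular points $z_i$ and at infinity, since the $\log\rho^2$ factors must be combined consistently across the several translated summands; the positivity of $V$ is conceptually the crux but is already delivered by Proposition~\ref{prop:superposition}, so it reduces to correctly identifying $U$ as a barycenter.
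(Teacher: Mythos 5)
Your construction of $U$ by the explicit superposition (\ref{eq:71}), the verification of the boundary conditions (\ref{eq:72}), and the positivity $V>0$ via the barycenter argument (Proposition \ref{prop:superposition} and Corollary \ref{cor:f-U}) coincide with the paper's route --- indeed the paper's proof opens with ``we have already proved most of the proposition'' and consists solely of the uniqueness claim. It is exactly that claim which your proposal does not prove. You write that the difference $W=U_1-U_2$ of two solutions is ``$O(1)$ relative to $\log\rho^2$ near the axis and $o(R)$ at infinity, hence (by Lemma \ref{lem:sign-U}-type maximum principle arguments, or by inspecting the allowed growth) must be affine in $z$''. Neither parenthetical clause is an argument, and the hypotheses as you state them are too weak for the conclusion: $\log\rho^2$ is itself an axisymmetric harmonic function on $\{\rho>0\}$, of bounded ratio to $\log\rho^2$ near the axis and of slow growth away from the axis, yet not affine in $z$ (it is the perturbation corresponding to replacing $f$ by $f+c$). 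What the asymptotics (\ref{eq:72}) actually give is the sharper statement $W=o(\log\rho)$ as $\rho\to0$, because both solutions carry the \emph{same} coefficient $f(z)$ in front of $\log\rho^2$; this little-$o$ is precisely the information your formulation discards, and it is indispensable.

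The missing idea is a removable singularity theorem. A maximum principle on $\{\rho>0\}$ cannot be run directly, since $W$ has no prescribed boundary values on the axis, only the growth bound $o(\log\rho)$; and Lemma \ref{lem:sign-U} merely propagates sign conditions on $U_\rho$ and $U_{zz}$ from the boundary, which says nothing about a difference of two solutions. The paper instead invokes \cite[Theorem 6.4]{HarPol70}: since $W=o(\log\rho)$, the singularity of $W$ along the axis is removable, so $W$ extends to an entire harmonic function on $\Bbb{R}^3$. Only after this extension does a growth argument close the proof: from $U_i\sim U_0$ one gets $W=O(R\log R)$ (not $o(R)$ --- near the axis $U_0$ itself is of size $R\log R$), and an entire harmonic function of sub-quadratic growth is affine by the Liouville theorem; axisymmetry then forces it to be affine in $z$ alone. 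Without the removability step your uniqueness argument does not go through; with it, what finishes the proof is Liouville, not a maximum principle.
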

We say above $\rho>0$ because for a general $f$ as in the proposition, there is no reason to produce a metric which can be compactified into a smooth metric. This will be studied in the next section.

Observe that by remark \ref{rem:homogeneity}, if the function $f(z)$ satisfies the hypothesis of the proposition, then for $a\neq0$ the function $|a|^{-1}f(az+b)$ also does, and the two metrics generated by $f(z)$ and $|a|^{-1}f(az+b)$ are homothetic.
\begin{proof}
  We have already proved most of the proposition, there remains to prove only the uniqueness of $U$ satisfying the asymptotics (\ref{eq:72}). If we have two such axisymmetric harmonic functions $U_1$, $U_2$, then $U_1(\rho,z)-U_2(\rho,z)=o(\log \rho)$ when $\rho\rightarrow0$. By \cite[Theorem 6.4]{HarPol70} it follows that $U_1-U_2$ extends smoothly across $\rho=0$. Therefore $U_1-U_2$ is a global harmonic function on $\Bbb{R}^3$. From the behaviour when $R\rightarrow+\infty$ we deduce that $U_1-U_2=O(R \log R)$ which implies that $U_1-U_2$ is an affine function on $\Bbb{R}^3$. Since it is axisymmetric, it is an affine function of $z$ alone.
\end{proof}

\section{The moment map}
\label{sec:moment-map}

We now recover the polytope $P$ from the generating function $U$, and so ultimately from the convex piecewise linear function $f$ of proposition \ref{prop:f-U}.
We start from a Hermitian Ricci flat ALF metric generated by a function $U$ as in corollary \ref{cor:harmarck}. By construction we also have the Kähler metric $g_K=x_1^2g$. The Hamiltonian Killing vector fields $\partial_t$ and $\partial_{x_3}$ have moments $x_1$ and $\mu$ with respect to $\omega_K$. These can be calculated from $U$ by:
\begin{prop}
  As functions of $\rho$, $z$, the moments $x_1$, $\mu$ are given by
  \begin{equation}
    \label{eq:73}
    x_1 = \frac2{H_z}, \qquad \mu=-\frac2k\big( z + \frac{\rho H_\rho-2H}{H_z} \big).
  \end{equation}
\end{prop}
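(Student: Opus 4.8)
The plan is to compute the two Hamiltonian moments directly from their defining differential relations $dx_1 = -\partial_t \lrcorner \omega_K$ and $d\mu = -\partial_{x_3}\lrcorner \omega_K$, using the ansatz data assembled in corollary \ref{cor:harmarck}. Since we already know $x_1 = (k^{-1}\lambda)^{1/3}$ from \eqref{eq:12} and the Kähler form is $\omega_K = dx_1\wedge\eta + We^v\,dx_2\wedge dx_3$ from \eqref{eq:18} in the LeBrun ansatz, the fastest route is to express everything in the $(\rho,z)$ variables via the change-of-variables formulas \eqref{eq:37}--\eqref{eq:38} together with the conjugate harmonic function $H$ of \eqref{eq:45}. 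First I would recall that $x_1 = 1/\xi$ with $\xi = \tfrac12\rho U_\rho$ from \eqref{eq:27} and \eqref{eq:36}; combining this with the relation $H_z = \rho U_\rho$ from \eqref{eq:45} gives immediately $x_1 = 2/(\rho U_\rho) = 2/H_z$, which is the first formula. This step is essentially bookkeeping among definitions already in place.

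The second formula for $\mu$ is the substantive part. The moment $\mu$ of $\partial_{x_3}$ satisfies $d\mu = -\partial_{x_3}\lrcorner\omega_K$, and I would compute this contraction from \eqref{eq:18}, noting that $\partial_{x_3}$ contracts both terms (recall $\eta = dt - F\,dx_3$, so $\partial_{x_3}\lrcorner\eta = -F$). This yields $d\mu = F\,dx_1 - We^v\,dx_2$ after the contraction, which one then rewrites using the expressions \eqref{eq:43}--\eqref{eq:44} for $V = x_1^2 W$ and $F$ in terms of $U$ and its conjugate $H$. The cleaner strategy, however, is to verify the proposed closed form $\mu = -\tfrac2k\bigl(z + (\rho H_\rho - 2H)/H_z\bigr)$ by differentiating it and checking that $d\mu$ agrees with $-\partial_{x_3}\lrcorner\omega_K$. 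To do this I would treat $\rho,z$ as the independent variables, compute $\mu_\rho$ and $\mu_z$ using the conjugacy relations \eqref{eq:45} (which give $H_\rho = -\rho U_z$, $H_z = \rho U_\rho$, and their second derivatives via the harmonicity \eqref{eq:35} of $U$), and match against the components of $-\partial_{x_3}\lrcorner\omega_K$ re-expressed in $(\rho,z)$ using \eqref{eq:38}.

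The main obstacle I anticipate is the algebraic density of this verification: the Kähler form is naturally written in the $(x_1,\eta,x_2,x_3)$ frame, while the target formula is in $(\rho,z)$, so one must push $\omega_K$ through the Jacobian \eqref{eq:38} and simplify the resulting rational expressions in the derivatives of $U$. The denominators $U_{\rho z}^2 + U_{zz}^2$ appearing in $V$ and $F$ will need to cancel against factors coming from $\Delta = \tfrac{\rho}{4}(U_{zz}^2 + U_{\rho z}^2)$, and the harmonicity relation $\rho U_{\rho\rho} + U_\rho + \rho U_{zz} = 0$ (equivalently $(\rho U_\rho)_z = (\rho U_z)_\rho$ and its $z$-derivative) is what ultimately collapses the expression. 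A key simplification I would exploit is that $H_z = \rho U_\rho$ so that $x_1 = 2/H_z$ is already established, reducing the second identity to showing that the $z$- and $\rho$-derivatives of $\rho H_\rho - 2H$ combine correctly; in particular $(\rho H_\rho - 2H)_\rho$ and $(\rho H_\rho - 2H)_z$ can be evaluated directly from \eqref{eq:45}, and the harmonic conjugacy keeps the whole computation first-order once the Hessian of $H$ is expressed through that of $U$.

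I expect the cleanest presentation to mirror the style of corollary \ref{cor:harmarck}, namely to declare that the identity is established by a direct computation from \eqref{eq:37}, \eqref{eq:45}, and the formulas \eqref{eq:43}--\eqref{eq:44}, verifying that $d\mu$ as obtained by differentiating \eqref{eq:73} equals $-\partial_{x_3}\lrcorner\omega_K$; the routine but lengthy simplification can reasonably be left to the reader, with the one nontrivial input being the harmonicity \eqref{eq:35} of $U$ that forces the cross-terms to cancel.
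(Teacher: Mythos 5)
Correct, and essentially the paper's own proof: the paper likewise obtains $x_1=2/H_z$ directly from (\ref{eq:36}) and (\ref{eq:45}), then pushes $\omega_K$ into the $(\rho,z)$ coordinates via (\ref{eq:37})--(\ref{eq:38}), contracts with $\partial_{x_3}$ using (\ref{eq:43})--(\ref{eq:45}), and recognizes the resulting 1-form (its equation (\ref{eq:76})) as the differential of the stated expression for $\mu$ --- the same computation you describe, just run forwards rather than as a verification of the closed form. One minor slip worth noting: in your first route the contraction gives $d\mu=-\partial_{x_3}\lrcorner\omega_K=-F\,dx_1+We^{v}\,dx_2$ (you dropped the overall minus sign from the moment-map convention), but this does not affect the verification strategy you actually adopt.
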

\begin{proof}
  By (\ref{eq:36}) and (\ref{eq:45}) we have $x_1=\frac2{\rho U_\rho}=\frac2{H_z}$. To obtain $\mu$ we calculate the Kähler form $\omega_K$: from (\ref{eq:18}) we deduce
  \begin{equation}
    \label{eq:74}
    \omega_K = - \frac{d\xi}{\xi^2} \wedge (dt-F dx_3) + \frac{Ve^u}{\xi^2} dx_2 \wedge dx_3.
  \end{equation}
  Using (\ref{eq:35}) and (\ref{eq:36}) it follows that
  \begin{equation}
    \label{eq:75}
    \omega_K = \frac2{U_\rho^2} \left( \frac1\rho(U_{zz}d\rho-U_{\rho z}dz)\wedge(dt-Fdx_3)
      - V (U_{\rho z}d\rho+U_{zz}dz)\wedge dx_3 \right).
  \end{equation}
  Calculating with (\ref{eq:43})--(\ref{eq:45}) we obtain that $d\mu = - \partial_{x_3} \lrcorner \omega_K$ is given by the formula
  \begin{multline}\label{eq:76}
    d\mu = \frac2{kH_z^2} \Big(
      \big((\rho H_\rho-2H) H_{\rho z} + \rho H_z H_{zz}\big) d\rho \\
      + \big(H_z^2 + (\rho H_\rho-2H) H_{zz} - \rho H_z H_{\rho z}\big) dz \Big)
  \end{multline}
and the formula for $\mu$ follows.
\end{proof}

The image of the moment map is the polytope $P$. The (finite) boundary is obtained for $\rho=0$, while the boundary at infinity is obtained for $R=\infty$, that is $x_1=0$. Suppose that $f$ has $r+1$ different slopes $f'_0=-1<f'_1<\cdots<f'_r=1$ and denote $z_1<...<z_r$ the angular points of $f$, so that the slope of $f$ on the segment $[z_i,z_{i+1}]$ is $f'_i$ (we take $z_0=-\infty$ and $z_{r+1}=+\infty$).
\begin{prop}\label{prop:moment-f}
  Suppose $U$ is generated from a convex piecewise linear function $f(z)$ as in proposition \ref{prop:f-U}. Then for $i=0,...,r$:
  \begin{itemize}
  \item On each segment $(z_i,z_{i+1})$ on which $f$ is affine and non constant:
    \begin{itemize}
    \item the function $F(0,z)$ is constant; we denote this constant $F_i$;
    \item one has
      \begin{equation}
        \label{eq:77}
        x_1 = \frac1{f(z)}, \qquad \mu = - \frac{F_i}{f(z)} + \frac2k \big( \frac{f(z)}{f'(z)} - z \big) .
      \end{equation}
      In particular $\mu+F_ix_1$ is constant on each segment of $f$, and it follows that a normal vector to $\partial P$ along the corresponding edge is $\partial_{x_3}+F_i \partial_t$.
    \end{itemize}
  \item On a segment $[z_i,z_{i+1}]$ where $f$ is constant, then $x_1=\frac1{f(z)}$ is constant and a normal vector to $\partial P$ is $\partial_t$.
  \end{itemize}
\end{prop}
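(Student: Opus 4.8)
The plan is to feed the explicit conjugate harmonic function $H$ into the moment formulas (\ref{eq:73}) and reduce the whole statement to boundary limits as $\rho\to0$. Writing $U$ in the superposition form (\ref{eq:71}), its conjugate (determined by (\ref{eq:45}) up to an additive constant, which merely translates $P$ and leaves the entire statement invariant) is $H = 2Az + \sum_{i=1}^r a_i H_0(\rho,z-z_i)$, with $H_0$ the Taub-NUT conjugate recorded in section \ref{sec:kerr-taub-bolt}. The exact identities $H_z = \rho U_\rho = 2A + 2\sum_i a_i R_i$ (where $R_i := \sqrt{\rho^2+(z-z_i)^2}$) and $H_\rho = \sum_i 2a_i\rho\log\frac{R_i+(z-z_i)}\rho$ then make all the limits transparent.

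First I would let $\rho\to0$. Since $R_i\to|z-z_i|$, the identity above and (\ref{eq:70}) give $H_z\to 2f(z)$, hence $x_1 = 2/H_z\to 1/f(z)$; moreover $\rho H_\rho\to0$ as each summand is $O(\rho^2\log\rho)$. Setting $\overline H(z):=\lim_{\rho\to0}H = 2Az + \sum_i a_i(z-z_i)|z-z_i|$, the formula for $\mu$ collapses to $\mu\to -\frac2k\big(z-\overline H/f\big)$. The crux is a cancellation on a non-constant segment $(z_i,z_{i+1})$, where $f$ is affine, $f = f'_i z + B_i$: expanding $\overline H$ there as a quadratic in $z$, one checks that the $z^2$- and $z$-coefficients of $\overline H - f^2/f'_i$ both vanish, so $\overline H - f^2/f'_i$ is constant. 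Putting $F_i := -\frac2k\big(\overline H - f^2/f'_i\big)$, the limit of $\mu$ rearranges into the asserted $\mu = -F_i/f + \frac2k(\frac{f}{f'}-z)$, and $\frac{f}{f'_i}-z = B_i/f'_i$ is constant, so $\mu+F_i x_1$ is constant along the edge. As $\mu+F_i x_1$ is the moment of $\partial_{x_3}+F_i\partial_t$, and a vector field whose moment is constant on an edge is normal to it, the inward normal is $\partial_{x_3}+F_i\partial_t$.

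It remains to identify $F_i$ with the boundary value $F(0,z)$, for which I would take $\rho\to0$ in (\ref{eq:44}). From $U_\rho\sim 2f/\rho$, $U_{\rho z}\sim 2f'_i/\rho$, and $U_{zz}\to -2\sum_i a_i/|z-z_i|$ (finite), the quotient $\frac{\rho U_\rho^2 U_{\rho z}}{U_{\rho z}^2+U_{zz}^2}$ tends to $2f^2/f'_i$, since $U_{zz}^2$ is negligible against $U_{\rho z}^2\sim 4(f'_i)^2/\rho^2$; meanwhile $\rho^2 U_z = O(\rho^2\log\rho)\to0$ and $2H\to2\overline H$. Hence $F(0,z)\to -\frac2k\big(\overline H-f^2/f'_i\big) = F_i$, which simultaneously shows $F(0,z)$ is constant and that it is the very constant entering $\mu$. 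On a segment where $f$ is constant ($f'_i=0$) the expressions for $\mu$ and $F$ degenerate, but $x_1 = 1/f$ is manifestly constant, so the edge is a level set of the moment of $\partial_t$ and its normal is $\partial_t$.

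The main obstacle is the asymptotic bookkeeping near $\rho=0$: one must keep track that $U_\rho,U_{\rho z}$ blow up like $\rho^{-1}$ while $U_{zz}$ stays bounded, so that the denominator in (\ref{eq:44}) is dominated by $U_{\rho z}^2$, and one must justify differentiating the boundary equivalents termwise — which is legitimate here precisely because (\ref{eq:71}) presents $U$, and hence $H$, as an explicit finite sum of elementary functions whose derivatives are computed exactly. The elegant point is that the single cancellation in $\overline H - f^2/f'_i$ is what binds together the three a priori separate claims — the value of $x_1$, the constancy of $F(0,z)$, and the formula for $\mu$ — into one coherent statement.
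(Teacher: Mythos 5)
Your proposal is correct and follows essentially the same route as the paper: both proofs take $\rho\to 0$ limits in the ansatz formulas (\ref{eq:73}), (\ref{eq:44}), (\ref{eq:45}), justified by the explicit superposition (\ref{eq:71}), and both identify the constant $F_i=F(0,z)=\frac2k\bigl(\frac{f^2}{f'}-H(0,z)\bigr)$ before reading off (\ref{eq:77}). The only cosmetic difference is how constancy is established — you expand $\overline{H}=\lim_{\rho\to0}H$ as an explicit quadratic on each segment and check that the $z^2$- and $z$-coefficients of $\overline{H}-f^2/f'_i$ cancel, whereas the paper differentiates and uses $H_z(0,z)=\rho U_\rho|_{\rho=0}=2f(z)$ to get $F(0,z)_z=0$ in one line (also, your remark that changing the additive constant in $H$ ``translates $P$'' is slightly off — it shears $\mu$ by a multiple of $x_1$ and shifts $F_i$ correspondingly — but as you note the statement is invariant either way).
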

\begin{proof}
  We calculate the asymptotics of $F$ near $\rho=0$. In the calculations below we take derivatives of equivalents, but all our assertions can be checked easily since $U$ is given by the explicit formula (\ref{eq:71}). So when $\rho\rightarrow0$ we have $U(\rho,z)\sim f(z)\log \rho^2$ and it follows that on a segment $(z_i,z_{i+1})$ where $f'(z)\neq0$:
  \begin{equation}
    \label{eq:78}
      \rho U_\rho \sim 2f(z), \quad U_z \sim f'(z)\log \rho^2, \quad
      U_{\rho z}\sim \frac{2f'(z)}\rho.
    \end{equation}
    One also calculates
    \begin{equation}
      \label{eq:79}
      U_{zz} = - 2 \sum_{i = 1} ^r \frac{a_i}{\sqrt{\rho^2+(z-z_i)^2}}
    \end{equation}
    which in particular extends on $\rho=0$ in the interior of the interval $(z_i,z_{i+1})$. It follows that
    \begin{equation}
      \label{eq:80}
      U_{\rho z}^2+U_{zz}^2 \sim \frac{4f'(z)^2}{\rho^2}.
    \end{equation}
From (\ref{eq:44}) we then obtain that $F$ also extends on $\rho=0$ with
\begin{equation}
  \label{eq:81}
  F(0,z) = \frac2k \big( \frac{f(z)^2}{f'(z)} - H(0,z) \big).
\end{equation}
Since $H_z=\rho U_\rho= 2f(z)$ on $\rho=0$, we obtain $F(0,z)_z=0$ so $F(0,z)$ is constant on $(z_i,z_{i+1})$.

The formula (\ref{eq:77}) is then an immediate consequence of (\ref{eq:73}) and (\ref{eq:45}). The last assertion on a segment $(z_i,z_{i+1})$ where $f$ is constant is tautological.
\end{proof}

\begin{cor}
  Under the same hypotheses the moment polytope $P$ of $g_K$ has $r+1$ edges $E_i$, parametrized by the segments $[z_i,z_{i+1}]$ ($i=0...r$) via the formula (\ref{eq:77}), and one additional edge at infinity $E_\infty$ with equation $x_1=0$.
\end{cor}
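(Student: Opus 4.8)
The plan is to use the uniformization of Proposition \ref{prop:unif} to transfer the question to the boundary of the upper half plane, and then read the edges directly off Proposition \ref{prop:moment-f}. Since $(\rho,z)$ gives a diffeomorphism of the interior of $P$ onto $\{\rho>0\}$, the boundary $\partial P$ splits into a finite part, the image of the $z$-axis $\{\rho=0\}$, and a part at infinity, the image of $\{R=\infty\}$. Along the finite part I would parametrize by $z$, which by (\ref{eq:52}) runs monotonically as one traverses $\partial P\setminus E_\infty$; hence the boundary map $z\mapsto(x_1(0,z),\mu(0,z))$ is an embedding of $\mathbb{R}$ onto $\partial P\setminus E_\infty$.

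First I would cut $\mathbb{R}$ at the angular points $z_1<\cdots<z_r$ into the $r+1$ segments $(z_i,z_{i+1})$, $i=0,\dots,r$ (with $z_0=-\infty$, $z_{r+1}=+\infty$), one for each of the $r+1$ distinct slopes. On each such segment Proposition \ref{prop:moment-f} shows the boundary curve lies on a line: on a non-constant segment one has $\mu+F_ix_1=\mathrm{const}$ with inward normal $\partial_{x_3}+F_i\partial_t$, while on the (at most one) constant segment $x_1$ is constant with normal $\partial_t$. Formula (\ref{eq:77}) shows each is non-degenerate — on a non-constant segment $x_1=1/f(z)$ genuinely varies since $f'\neq0$, and on a constant segment $\mu$ varies — so each yields a genuine one-dimensional face $E_i$ of $P$, giving the $r+1$ finite edges. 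For the edge at infinity, the part $\{R=\infty\}$ maps, by the analysis of section \ref{sec:case-edge-at}, to the single segment on which $x_1\to0$ (use $x_1=2/H_z$ from (\ref{eq:73}) together with $\rho U_\rho\to\infty$), joining the two vertices $E_0\cap E_\infty$ and $E_r\cap E_\infty$ reached as $z\to-\infty$ and $z\to+\infty$; this is $E_\infty=\{x_1=0\}$.

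The main point to check — the only real content beyond Proposition \ref{prop:moment-f} — is that consecutive segments give genuinely distinct edges, i.e. the supporting line jumps at each angular point $z_i$. For two adjacent non-constant segments I would use (\ref{eq:81}): since $H(0,\cdot)$ (determined by $H_z=2f$ on $\rho=0$) and $f$ are continuous at $z_i$ while $f'$ jumps from $f'_{i-1}$ to $f'_i$, the constants $F_i=\frac2k\big(\frac{f(z_i)^2}{f'_i}-H(0,z_i)\big)$ satisfy
\begin{equation*}
 F_i-F_{i-1}=\frac2k\,f(z_i)^2\Big(\frac1{f'_i}-\frac1{f'_{i-1}}\Big)\neq0,
\end{equation*}
because $f(z_i)>0$ and $f'_{i-1}<f'_i$; hence the normals $\partial_{x_3}+F_i\partial_t$ differ and the edges are distinct. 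When one of the adjacent segments is the constant one, the two normals are $\partial_t$ and $\partial_{x_3}+F\partial_t$, which are manifestly independent, so the same conclusion holds. Combined with the convexity of the moment image, this identifies $\partial P$ as the piecewise linear curve with exactly the $r+1$ finite edges $E_i$ and the edge at infinity $E_\infty$, which is the assertion of the corollary.
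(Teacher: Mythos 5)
Your proof is correct and follows essentially the same route as the paper, which presents this corollary as an immediate consequence of Proposition \ref{prop:moment-f} together with the boundary analysis of sections \ref{sec:case-finite-edge}--\ref{sec:uniformization} (monotonicity of $z$ along $\partial P\setminus E_\infty$, and $x_1\to0$ exactly at infinity). The one detail you add explicitly — that consecutive normals differ because $F_i-F_{i-1}=\frac2k f_i^2\bigl(\frac1{f'_i}-\frac1{f'_{i-1}}\bigr)\neq0$ — is precisely the paper's later formula (\ref{eq:87}) of Proposition \ref{prop:regularity}, derived the same way from (\ref{eq:81}), so you are filling in a step the paper leaves implicit rather than taking a different path.
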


\section{Regularity}
\label{sec:regularity}

We now study the compactification of a metric coming from a generating function $U$. Since we have a toric Kähler metric, we can apply the toric theory: the compactification is a smooth manifold if the polytope satisfies the Delzant condition, that is two consecutive (finite) edges have primitive integral normals which form an integral basis of a lattice $\Bbb{Z}^2\subset \Bbb{t}^2$. The point here is that the integral structure, that is the lattice $\Bbb{Z}^2$, is not a priori given, since the Tod ansatz is local. In particular there is no reason why the coordinates $(t,x_3)$ above would be nice angular coordinates: they parametrize $\Bbb{t}^2$ but we have to find the lattice inside.

We will treat the general case where we allow the metric $g$ to have conical singularities along the divisor $D_i$ which is the preimage by the moment map of the finite edge $E_i$ of the polytope, say of angle $2\pi\alpha_i$  where $\alpha_i>0$. Each $D_i$ is a 2-sphere except for the two edges going to infinity where the point at infinity is missing.

\subsection{The regularity criterion}
\label{sec:regularity-criterion}

We denote $f_i=f(z_i)$.
\begin{lem}
  The metric $g_K$ extends over the sphere $D_i$ (outside the two fixed points) with a conical singularity of angle $2\pi\alpha_i$ if the vector $v_i$, normal to the edge $E_i$,defined by
  \begin{equation}
    \label{eq:82}
    v_i =
    \begin{cases}
      \alpha_i f'_i (\partial_{x_3}+F_i\partial_t) & \text{ if }f'_i \neq 0, \\ \frac2k \alpha_i f_i^2  \partial_t & \text{ if }f'_i = 0,
    \end{cases}
  \end{equation}
  is primitive in $\Bbb{Z}^2$.
\end{lem}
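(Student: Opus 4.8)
The plan is to read off the local model of the metric $g_K$ transverse to the divisor $D_i$ and match it against the standard conical model, identifying the correct circle subgroup of $\mathbb{T}^2$ whose collapse produces the singularity. Recall from section \ref{sec:toric-kahl-geom} that near an edge $E$ the smoothness (resp. conicity of angle $2\pi\alpha_E$) of $g_K$ is detected on the symplectic potential by the behaviour $\psi = \frac{1}{2\alpha_E}\lambda_E\log\lambda_E + \text{smooth}$, where $\lambda_E$ is the \emph{integral} affine equation of $E$; equivalently, in the angle-action picture, the metric extends smoothly (resp. conically) precisely when the inward normal $v_E=d\lambda_E$ is a primitive element of the lattice $\mathbb{Z}^2\subset\mathbb{t}^2$, after rescaling $\lambda_E$ by $\frac{1}{\alpha_E}$ to account for the cone angle.

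First I would use Proposition \ref{prop:moment-f} to determine, for each finite edge $E_i$, an affine equation cutting out $E_i$ and, crucially, the direction of its normal in $\mathbb{t}^2$. In the case $f'_i\neq0$, Proposition \ref{prop:moment-f} shows that $\mu + F_i x_1$ is constant along $E_i$, so the covector $d(\mu+F_i x_1)$ annihilates the edge; dualizing through $\omega_K = d x_1\wedge dt + d\mu\wedge dx_3$ identifies the corresponding normal \emph{direction} in $\mathbb{t}^2$ as $\partial_{x_3}+F_i\partial_t$. In the case $f'_i=0$ the edge sits at constant $x_1$ and the normal direction is $\partial_t$. These are exactly the directions appearing in \eqref{eq:82}; the content of the lemma is therefore the precise \emph{normalization} of these directions as primitive integral vectors.

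Next I would fix the normalization by computing the length of the collapsing circle and comparing with the flat cone $\mathbb{C}/(\mathbb{Z}/\text{angle})$. Along $D_i$ the vector field $v_i$ is the one whose orbits shrink to zero as one approaches the edge, and smoothness of a cone of angle $2\pi\alpha_i$ requires that, in terms of the transverse geodesic distance, $|v_i|_{g_K}$ vanishes to first order with the correct speed, i.e. that $v_i$ generates a circle of the appropriate period. Using the Tod-ansatz form \eqref{eq:41} of $g$ together with $g_K=x_1^2 g$, and the asymptotics \eqref{eq:78}--\eqref{eq:81} of the building blocks $U_\rho, U_{zz}, F, H$ as $\rho\to0$ on the segment $(z_i,z_{i+1})$, I would extract the leading transverse behaviour of $g_K$ near $D_i$. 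The scalars $\alpha_i f'_i$ (resp. $\frac2k\alpha_i f_i^2$) are then pinned down as precisely the factors making the period of $v_i$ equal to that of the model cone angle $2\pi\alpha_i$; the factor $\frac{2}{k}f_i^2$ in the $f'_i=0$ case reflects that there $x_1=\frac1{f_i}$ is the relevant transverse coordinate and one differentiates $\mu$ in \eqref{eq:77} instead of $x_1$. Once $v_i$ is correctly normalized so that its orbit has the right period, the statement ``$g_K$ has a cone of angle $2\pi\alpha_i$'' becomes equivalent to ``$v_i$ is primitive in $\mathbb{Z}^2$'', which is the assertion of the lemma.

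The main obstacle I anticipate is not the identification of the normal directions, which is essentially formal from Proposition \ref{prop:moment-f}, but the careful bookkeeping of the constant factors in the transverse metric expansion: one must track how the conformal factor $x_1^2$, the rescaling of $\lambda_E$ by $\frac1{\alpha_E}$, and the explicit coefficients $a_i=\frac12(f'_i-f'_{i-1})$ from \eqref{eq:79} combine to yield exactly $\alpha_i f'_i$ and $\frac2k\alpha_i f_i^2$. In particular one must verify that the two cases glue consistently with the choice $k=4n$ used for the Taub-NUT normalization in section \ref{sec:kerr-taub-bolt}, which is presumably why that normalization is flagged there. A clean way to avoid raw computation is to translate everything back through the symplectic potential: show that the integral equation of $E_i$ is, up to the factor $\frac{1}{\alpha_i}$, the affine function whose differential is $v_i$, so that the primitivity of $v_i$ is literally the Delzant/$\alpha$-Delzant condition recalled in section \ref{sec:toric-kahl-geom}. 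I would expect the formal verification to proceed by this route, relegating the constant-chasing to the explicit formulas already assembled in corollary \ref{cor:harmarck} and Proposition \ref{prop:moment-f}.
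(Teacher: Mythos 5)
Your proposal follows essentially the same route as the paper's proof: identify the collapsing direction ($\partial_{x_3}+F_i\partial_t$ when $f'_i\neq0$, $\partial_t$ when $f'_i=0$), use the Harmark form \eqref{eq:41} together with the $\rho\to0$ asymptotics \eqref{eq:78}--\eqref{eq:81} (and \eqref{eq:85} in the degenerate case) to compute the transverse cone angle, normalize the generator so that the period matches $2\pi\alpha_i$, and then invoke the standard toric/Delzant regularity theory (with $\lambda_{E_i}$ weighted by $\frac1{\alpha_i}$) to upgrade this to extension of $g_K$ over $D_i$. The constant-chasing you defer is exactly what the paper carries out, yielding $\lambda=\alpha_i f'_i$, respectively $\lambda=\frac2k\alpha_i f_i^2$.
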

\begin{proof}
  Suppose $f'_i\neq0$. The formulas (\ref{eq:78})--(\ref{eq:79}) imply that $V$ extends over $\rho=0$ along the segment $(z_i,z_{i+1})$ with
  \begin{equation}
    \label{eq:83}
    V(0,z) = - \frac1k \big( 2 f(z) + \frac{f(z)^2}{f'(z)^2} U_{zz} \big)
  \end{equation}
  and similarly
  \begin{equation}
    \label{eq:84}
    e^{2\nu(0,z)} = f'(z)^2 V(0,z).
  \end{equation}
  From formula (\ref{eq:41}) we see that $g$ (or $g_K=x_1^2g$) can be compactified over the segment $(z_i,z_{i+1})$ if the vector field $\partial_{x_3}+F_i\partial_t\in \ker(dt-F_i dx_3)$ generates a circle, which will be contracted into a point at $\rho=0$. Suppose this is the case, so we have an integral generator $\lambda(\partial_{x_3}+F_i\partial_t)$ of the circle. Then the metric extends continuously with conical singularity of angle $2\pi\alpha_i$ if $e^{-\nu}\sqrt V dx_3(\lambda(\partial_{x_3}+F_i\partial_t))=\alpha_i$, that is $\pm \frac \lambda{f'_i}=\alpha_i$. We can take $\lambda=\alpha_if'_i$, and so we need $\alpha_if'_i(\partial_{x_3}+F_i\partial_t)$ to be an (integral) generator of a circle. If this is the case, then the standard regularity theory for toric extremal Kähler metrics tells us that the metric extends over the divisor $D_i$ (in the conical case, this means that the symplectic potential has the behaviour described in section \ref{sec:toric-kahl-geom}).

  The case where $f'_i=0$ is similar but the behaviour of the coefficients of $g$ is different. One calculates that $U_{\rho z}=O(\rho)$ and it follows that
  \begin{equation}
    \label{eq:85}
    V \sim - \frac4k \frac{f_i^2}{\rho^2U_{zz}}.
  \end{equation}
Therefore the contracted direction is now generated by $\partial_t$. We need the integral generator $\lambda\partial_t$ of the circle to satisfy $e^{-\nu}V^{-\frac12}\lambda\sim \alpha_i\rho$, and this gives $\lambda=\frac2k f_i^2\alpha_i$.
\end{proof}

Since $F$ is defined only up to an additive constant, we can always choose $F_0=0$. In the case $\alpha_0=1$ this implies that $\partial_{x_3}$ is primitive and therefore $\mu$ is primitive and gives the normal of the first edge $E_0$ of the polytope, corresponding to the segment $(-\infty,z_1)$ of $f$. In the general case $\alpha_0\neq 1$, that is when we have conical singularities, the function $\mu$ is still the weighted equation $\frac1{\alpha_0} \lambda_{E_0}$ used to describe the symplectic potential in (\ref{eq:57}). In all cases from (\ref{eq:77}) we see that the value of $\mu$ at the two fixed points of $P$ at infinity is $\frac2k \lim_{z\rightarrow \pm \infty} (\frac{f(z)}{f'(z)}-z)$. In terms of formula (\ref{eq:71}) this is $\frac2k (\pm A-\sum_1^r a_iz_i)$. From the normalization (\ref{eq:57}) on the polytope, the length of the edge at infinity with respect to $\mu$ should be $2$, which means that we want $\frac{4A}k=2$, that is
\begin{equation}
  \label{eq:86}
  k = 2A.
\end{equation}
In particular this gives the value $k=4n$ used for the Taub-NUT metric in section \ref{sec:kerr-taub-bolt}.
\begin{rem} Formula (\ref{eq:86}) can be alternatively derived as follows. From the ALF condition (\ref{eq:1}), we infer that $\frac{\partial ^2 \psi}{\partial x_1 \partial x_1} \sim \frac{1}{x_1 ^2}$ at infinity, see (\ref{eq:63}), while $\frac{\partial ^2 \psi}{\partial x_1 \partial x_1} = \frac{\partial \nu _1}{\partial x_1} = W = \frac{V}{x _1 ^2}$, cf. Remark  \ref{rem:def-v}. It follows that $V \sim 1$ at infinity, while from  (\ref{eq:71})  and  (\ref{eq:43}) we easily infer  $V \sim \frac{2 A}{k}$ when $R$ tends to infinity.
\end{rem}

There remains to study the regularity at the fixed points.
We apply the Delzant condition to deduce that the metric $g_K$ (hence $g$) extends to a smooth compactification $X^4$ as a metric with conical singularities if for $i=1,...,r$ the basis $(v_{i-1},v_i)$ is an integral basis of $\Bbb{Z}^2$. We can compare $v_{i-1}$ and $v_i$ thanks to:
\begin{prop}\label{prop:regularity}
  If $f'_{i-1}\neq0$ and $f'_i\neq0$, then
  \begin{align}
    \label{eq:87}
    F_i - F_{i-1} &= \frac2k f_i^2 \big( \frac1{f'_i} - \frac1{f'_{i-1}} \big). \\
\intertext{If $f'_i=0$, then}
    \label{eq:88}
    F_{i+1}-F_{i-1} &= \frac2k \left( f_i^2 \big( \frac1{f'_{i+1}} - \frac1{f'_{i-1}} \big) - 2 (z_{i+1}-z_i) f_i \right).
  \end{align}
\end{prop}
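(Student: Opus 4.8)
The plan is to read off the jump of $F(0,z)$ across an angular point directly from the closed form (\ref{eq:81}), namely $F(0,z)=\frac2k\big(\frac{f(z)^2}{f'(z)}-H(0,z)\big)$ on each segment where $f'\neq0$, together with the identity $H_z(0,z)=\lim_{\rho\to0}\rho U_\rho=2f(z)$ established in the proof of Proposition~\ref{prop:moment-f}. The crucial observation is that, since $f$ is piecewise linear and hence continuous, its antiderivative $H(0,\cdot)$ (whose derivative is $2f$) is continuous, indeed $C^1$, across every angular point $z_i$. Consequently, although $F(0,z)$ itself jumps at $z_i$, the two one-sided limits of (\ref{eq:81}) share the same value $H(0,z_i)$, so only the term $\frac{f^2}{f'}$ contributes to the jump.

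First I would treat the case $f'_{i-1}\neq0$, $f'_i\neq0$. On the segment $(z_{i-1},z_i)$ formula (\ref{eq:81}) holds with $f'\equiv f'_{i-1}$, and letting $z\to z_i$ (where $f\to f_i$) gives $F_{i-1}=\frac2k\big(\frac{f_i^2}{f'_{i-1}}-H(0,z_i)\big)$; likewise on $(z_i,z_{i+1})$ one gets $F_i=\frac2k\big(\frac{f_i^2}{f'_i}-H(0,z_i)\big)$. Subtracting, the common term $H(0,z_i)$ cancels and yields exactly (\ref{eq:87}).

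For the case $f'_i=0$ the segment $(z_i,z_{i+1})$ is where $f$ is constant, so (\ref{eq:81}) cannot be applied there; instead I would compare the two neighbouring non-constant segments. Evaluating (\ref{eq:81}) at $z_i$ from the left gives $F_{i-1}=\frac2k\big(\frac{f_i^2}{f'_{i-1}}-H(0,z_i)\big)$, and at $z_{i+1}$ from the right gives $F_{i+1}=\frac2k\big(\frac{f_{i+1}^2}{f'_{i+1}}-H(0,z_{i+1})\big)$, where $f_{i+1}=f_i$ because $f$ is constant between $z_i$ and $z_{i+1}$. The difference now retains the increment $H(0,z_{i+1})-H(0,z_i)=\int_{z_i}^{z_{i+1}}H_z(0,z)\,dz=\int_{z_i}^{z_{i+1}}2f(z)\,dz=2f_i(z_{i+1}-z_i)$, using $f\equiv f_i$ on the flat segment. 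Substituting produces (\ref{eq:88}).

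The only point requiring genuine care—the main, and rather mild, obstacle—is justifying that (\ref{eq:81}) extends continuously to the endpoints of each segment and that $H(0,\cdot)$ is continuous at the angular points. Both facts follow from the explicit formula (\ref{eq:71}) for $U$: it makes $U_{zz}$ in (\ref{eq:79}) integrable up to $\rho=0$ in the interior of each interval, ensures $\rho U_\rho\to 2f$ locally uniformly, and shows $H(0,\cdot)$ is a continuous antiderivative of $2f$. Once this is granted, both identities reduce to elementary bookkeeping.
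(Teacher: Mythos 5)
Your proposal is correct and follows essentially the same route as the paper: both read the jumps of $F(0,z)$ off formula (\ref{eq:81}), using the continuity of $H(0,\cdot)$ at the angular points for (\ref{eq:87}), and the identity $H(0,z)_z=2f(z)=2f_i$ on the flat segment to compute the increment $2(z_{i+1}-z_i)f_i$ for (\ref{eq:88}). Your write-up simply makes explicit the one-sided limits and the justification via the explicit formula (\ref{eq:71}) that the paper leaves implicit.
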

\begin{proof}
  The first formula is an immediate consequence of (\ref{eq:81}), because $H(0,z)$ is continuous. The second formula is also a consequence, because $H(0,z)_z=2f(z)=2f_i$ on $(z_i,z_{i+1})$, so the variation of $H(0,z)$ on this interval is $2(z_{i+1}-z_i)f_i$.
\end{proof}
\begin{rem}
  Using this proposition, formula (\ref{eq:82}) for the normals and the convexity of the polytope, one can give another proof of the convexity of $f(z)$ (corollary \ref{cor:f-convex}).
\end{rem}

This gives our final classification result:
\begin{thm}\label{thm:classification}
  Suppose that we have a convex piecewise linear function $f(z)$ with slopes $-1=f'_0<\cdots<f'_r=1$, singular points $z_1<\cdots<z_r$, and satisfying the condition $A>0$ of corollary \ref{cor:f-U}. Define the constants $F_i$ for all $i$'s such that $f'_i\neq0$ by $F_0=0$ and equations (\ref{eq:87})--(\ref{eq:88}). Then the corresponding instanton extends to a smooth compactification $X^4$ and the metric $g$ has conical singularities of angles $2\pi\alpha_0$,..., $2\pi\alpha_r$ along the divisors $D_i$ if and only if the normals $v_i$ to the edge $E_i$ of the polytope defined by (\ref{eq:82}) satisfy the Delzant condition, that is each pair $(v_{i-1},v_i)$ can be deduced from $(v_0,v_1)$ by a transformation in $GL(2,\Bbb{Z})$.

  Conversely, all toric Hermitian Ricci flat ALF metrics, with conical singularities around the fixed point loci, are obtained by this construction.
\end{thm}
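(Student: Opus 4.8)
The plan is to regard Theorem \ref{thm:classification} as the synthesis of the machinery assembled in the preceding sections, reducing its proof to careful bookkeeping around the one genuinely new point flagged in the text: the integral structure $\Bbb{Z}^2\subset\Bbb{t}^2$ is not prescribed a priori, because the Tod ansatz and the coordinates $(t,x_3)$ are only local. I would split the argument into the direct implication and the converse.

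For the direct implication I would start from $f$ and invoke Proposition \ref{prop:f-U} to obtain the generating function $U$ and, through Corollary \ref{cor:harmarck}, a Hermitian toric Ricci flat ALF metric $g$ defined on $\{\rho>0\}$; its Kähler companion is $g_K=x_1^2 g$. Proposition \ref{prop:moment-f} and its corollary then produce the moment polytope $P$, convex by Corollary \ref{cor:f-convex}, with finite edges $E_0,\dots,E_r$ indexed by the segments of $f$ and an edge at infinity $E_\infty$; on each segment with $f'\neq0$ the boundary value $F(0,z)$ is a constant $F_i$, and Proposition \ref{prop:regularity} shows these constants satisfy the jump relations (\ref{eq:87})--(\ref{eq:88}). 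Since $F$ is only defined up to an additive constant, the normalization $F_0=0$ is legitimate and makes the geometric constants $F_i$ coincide with the ones defined abstractly in the statement, so that the normals $v_i$ of (\ref{eq:82}) are genuinely intrinsic to $g$.

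I would then compactify edge by edge and vertex by vertex. Over the open divisor $D_i$ the lemma establishing (\ref{eq:82}) shows that $g_K$ (hence $g$) extends with a conical singularity of angle exactly $2\pi\alpha_i$ precisely when $v_i$ is primitive; the rescaling of the normal by $\alpha_i$ in (\ref{eq:82}) is what records the cone angle, matching the prescription $\lambda_E\mapsto\frac1{\alpha_E}\lambda_E$ of section \ref{sec:toric-kahl-geom}. To deal with the missing lattice I would simply declare $(v_0,v_1)$ to be an integral basis of $\Bbb{Z}^2$; then the standard Delzant regularity theory for toric Kähler metrics recalled in section \ref{sec:toric-kahl-geom} says that the compactification is smooth at the finite fixed point $E_{i-1}\cap E_i$ exactly when $(v_{i-1},v_i)$ is again a basis of this lattice, i.e.\ is deduced from $(v_0,v_1)$ by a transformation in $GL(2,\Bbb{Z})$, and this condition automatically forces each $v_i$ to be primitive. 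The two vertices at infinity $E_0\cap E_\infty$ and $E_r\cap E_\infty$ require no extra integrality: they are governed by the Poincaré behaviour of the symplectic potential built into the boundary conditions of section \ref{sec:boundary-conditions}, and $E_\infty$ is allowed to be irrational. Assembling these local statements yields the stated equivalence.

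For the converse I would retrace the chain of the paper in reverse, which is essentially a matter of citation. From an arbitrary toric Hermitian Ricci flat ALF metric with conical singularities, Theorem \ref{th:derdzinski} produces the Kähler metric $g_K$, the lemma of section \ref{sec:toric-kahl-geom} shows it is extremal and Bach flat with scalar curvature tending to $0$ at infinity, the toric machinery supplies the polytope and symplectic potential, Corollary \ref{cor:harmarck} realizes $g$ from an axisymmetric harmonic $U$, and the boundary analysis of section \ref{sec:boundary-conditions}, culminating in Corollary \ref{cor:f-convex} and Proposition \ref{prop:f-U}, exhibits the convex piecewise linear $f$ with $A>0$ generating it; Remark \ref{rem:angles} guarantees that the conical case is covered verbatim. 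The hard part is not analytic but structural: one must keep the non-canonical lattice and the conical normalization consistent, checking that the single choice of basis $(v_0,v_1)$ is compatible with the prescribed angle on every divisor and that the abstract recursion (\ref{eq:87})--(\ref{eq:88}) reproduces the geometric $F_i$. This is precisely where Propositions \ref{prop:moment-f} and \ref{prop:regularity} do the work, and it is the step I would expect to demand the most care.
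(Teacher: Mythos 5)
Your assembly of the compactification half is sound, and it is essentially what the paper does implicitly: the lemma of section \ref{sec:regularity-criterion}, Proposition \ref{prop:regularity}, and standard Delzant theory, with the lattice declared to be the one generated by $(v_0,v_1)$; likewise your converse chain (theorem \ref{th:derdzinski} $\to$ extremal Bach flat $\to$ Tod ansatz $\to$ boundary analysis $\to$ convex piecewise linear $f$ with $A>0$, with Remark \ref{rem:angles} covering the conical case) is exactly the paper's intended route. However, there is a genuine gap: you never verify that the compactified metric $g$ is ALF, and this is precisely the \emph{only} point the paper's proof of the theorem actually establishes. You outsource it to Proposition \ref{prop:f-U}, but within the paper's logic the ALF assertion of that proposition is exactly what still needs justification (its proof only establishes the uniqueness of $U$); moreover, ALF in the sense of section \ref{sec:alf-metrics} is not even a well-posed property of the metric on $\{\rho>0\}$ alone, since it requires the end to be of the form $(A,+\infty)\times L$ with $L$ compact, i.e.\ it requires the periodic identifications of $(t,x_3)$, which exist only once the lattice $\Bbb{Z}^2$ has been fixed by the Delzant data. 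So the ALF check cannot precede, or be separated from, the compactification step in the way your citation structure suggests.

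The missing step is short but must be done: since $U$ is explicit by (\ref{eq:71}), one has $U\sim U_0$ as $R\to\infty$ with a full asymptotic development; this is the same asymptotic behaviour as the Taub-NUT generating function of section \ref{sec:kerr-taub-bolt}, whose metric (\ref{eq:67}) was computed explicitly, so the formulas (\ref{eq:41})--(\ref{eq:45}) give $g \sim d\rho^2+dz^2+\rho^2 dx_3^2+dt^2 = dr^2+r^2(d\theta^2+\sin^2\theta\, dx_3^2)+dt^2$, which is the model ALF behaviour (\ref{eq:89}), with the error terms controlled by the expansion of $U$. Combined with the lattice making $t$ and $x_3$ genuine angular coordinates, this identifies the end as $(A,+\infty)\times L$ with the decay required by (\ref{eq:1}). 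Without this paragraph your argument proves that the construction yields a complete toric Hermitian Ricci flat metric with the stated conical structure, but not that it is an ALF instanton, which is part of the statement.
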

Note that this theorem is really constructive: given $f$, Tod's generating function $U$ is given by (\ref{eq:71}) and then the metric by (\ref{eq:41}).
\begin{proof}
  The only fact which maybe remains to be proved is that the Ricci flat metric $g$ is indeed ALF. One can see this from the Kähler formalism along the lines of section \ref{sec:case-edge-at}. Note in particular that since we have explicit formulas for $U$ we have a much better control than the one stated there.

  Alternatively we can also see directly the ALF nature of $g$ from the formulas (\ref{eq:41})--(\ref{eq:45}), since we know that $U(\rho,z) \sim U_0(\rho,z)$ given by (\ref{eq:61}) when $R\rightarrow\infty$ (and again $U$ is actually explicit so we have a full asymptotic development). It is the same asymptotic behaviour for all these metrics, and in particular coincides with that of the Taub-NUT potential, and we calculated the corresponding metric (\ref{eq:67}). We deduce that we have
  \begin{equation}
    \label{eq:89}
    g \sim d\rho^2 + dz^2 + \rho^2 dx_3^2 + dt^2 = dr^2 + r^2 (d\theta^2+\sin^2(\theta)dx_3^2)+dt^2 
  \end{equation}
  which is the common asymptotic behaviour of all ALF metrics.
\end{proof}

The condition on the function $f$ in order to get a nice compactification is very strong. As we will see, it can be used in practice to understand the solutions.

\begin{rem}
  It is not surprising that the Delzant condition on the normal vectors $(v_i)$ is the same as the condition used in the physical literature to understand the rod structures which give rise to smooth solutions, see for example \cite{CheTeo10}. The data of the Delzant polytope is close but not identical to that of the rod structure.
\end{rem}

\subsection{The basic examples ($r\leq 2$)}
\label{sec:basic-examples-nleq}

We have already seen the Taub-NUT example in section \ref{sec:kerr-taub-bolt}. We now describe all the examples where the polytope has three finite edges (and one edge at infinity). We recover in this way the family of the Kerr-Taub-bolt metrics. We only derive from the possible functions $f(z)$ all the possible solutions, but we leave to the reader the explicit calculation of the polytopes from the usual formulas for the Kerr-Taub-bolt metrics.

The piecewise linear function $f(z)$ (figure \ref{fig:ktb-f}) has two singular points that we choose to be $z_1=-b$ and $z_2=b$. We write $f_1=f(z_1)=b+a+m+n$ and $f_2=f(z_2)=b-a+m+n$, where the parameters $a$, $b$, $m$ and $n$ are fixed by the condition $b^2=a^2+m^2-n^2$ and $|n|\leq m$. The convention is consistent with the usual parameters of the Kerr-Taub-bolt metrics.
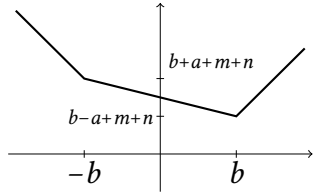
\begin{figure}[h]
  \centering
  \begin{tikzpicture}
    \draw[very thin, ->] (-2,0) -- (2,0);
    \draw[very thin, ->] (0,-0.5) -- (0,2);
    \draw[thick] (-1.9,1.9) -- (-1,1) -- (1,0.5) -- (1.9,1.4);
    \draw[very thin] (1,-0.05) -- (1,0.05) node[below]{$b$};
    \draw[very thin] (-1,-0.05) -- (-1,0.05) node[below]{$-b$};
    \draw[very thin] (-0.05,1) node[above right]{$\scriptstyle b+a+m+n$} -- (0.05,1);
    \draw[very thin] (-0.05,0.5) -- (0.05,0.5) node[left]{$\scriptstyle b-a+m+n$};
  \end{tikzpicture}
  \caption{The function $f(z)$ for the Kerr-Taub-bolt metrics}
  \label{fig:ktb-f}
\end{figure}
We have a slope $f'_1=- \frac ab$ on the middle segment. Note that the sign of $a$ is not fixed, and figure \ref{fig:ktb-f} corresponds to the case $a>0$. Let us begin by the case $a\neq0$. One obtains
\begin{equation}
  \label{eq:90}
  f(z) = m+n + \frac{b-a}{2b} |z+b| + \frac{b+a}{2b} |z-b|,
\end{equation}
in particular $k=2(m+n)$. We have the vectors $v_0=-(\partial_{x_3}+F_0\partial_t)$, $v_1=-\frac ab(\partial_{x_3}+F_1\partial_t)$ and $v_2=\partial_{x_3}+F_2\partial_t$. We are looking for metrics with cone angles $\alpha_0$, $\alpha_1$ and $\alpha_2$. Therefore we will get a smooth manifold if for some $\ell\in\Bbb{Z}$ one has
\begin{equation}
  \label{eq:91}
  \alpha_0v_0 \pm \alpha_2v_2 = \ell \alpha_1 v_1.
\end{equation}
We have the flexibility to vary $\alpha_1\in\Bbb{R}_+^*$, so we can actually suppose $\ell\in\{-1,0,1\}$ (the other values of $\ell$ correspond to quotienting by $\Bbb{Z}_{|\ell|}$). The equation (\ref{eq:91}) gives the system
\begin{equation}
  \label{eq:92}
  \begin{split}
    -\alpha_0 \pm \alpha_2 &= \ell \alpha_1 f'_1, \\
    -\alpha_0F_0 \pm \alpha_2F_2 &= \ell \alpha_1 f'_1 F_1.
  \end{split}
\end{equation}
The second equation, together with the first one and (\ref{eq:87}), gives
\begin{equation}
  \label{eq:93}
  \alpha_0 f_1^2 (1+\frac1{f'_1}) \pm \alpha_2 f_2^2 (1-\frac1{f'_1}) = 0.
\end{equation}
This implies that $\pm$ is actually a $+$. Taking $F_0=0$ (since $F$ is defined up to a constant), one calculates $F_1=-\frac1a((b+m)^2-(a+n)^2)$. We have two linear equations on $\alpha_0$, $\alpha_2$, and if $n\neq 0$ we calculate the solution:
\begin{equation}
  \label{eq:94}
  \alpha_0 = \frac \ell{4nb}\big( (b+m)^2-(a-n)^2 \big), \quad
  \alpha_2 = \frac \ell{4nb}\big( (b+m)^2-(a+n)^2 \big).
\end{equation}
In order to have positive angles, we need to take $\ell=\sign n$, hence the final formula
\begin{equation}
\label{eq:95}
  \alpha_0 = \frac 1{4|n|b}\big( (b+m)^2-(a-n)^2 \big), \quad
  \alpha_2 = \frac 1{4|n|b}\big( (b+m)^2-(a+n)^2 \big).
\end{equation}
So we see that for any such function $f$, there is a choice of angles so that the metric extends to a smooth manifold, with the corresponding conical singularities. The polytope is now calculated from proposition \ref{prop:moment-f}: we choose the integral basis of vectors $(v_1=-\frac ab(\partial_{x_3}+F_1\partial_t),-\alpha_0v_0=\alpha_0\partial_{x_3})$ which give us coordinates
\begin{equation}
  \label{eq:96}
  x = - \frac ab (\frac y{\alpha_0} + F_1x_1), \qquad y = \alpha_0 (\mu + \frac a{m+n}). 
\end{equation}
The translation on $y$ is here to center the polytope, since from (\ref{eq:77}) one obtains $\lim_{z\rightarrow\pm\infty}\mu(z)=\pm1-\frac a{m+n}$.

We now calculate the vertices $V_i=(x(z_i),y(z_i))$ of the polytope for $0\leq i\leq3$, where $z_0=-\infty$ and $z_3=+\infty$. We use formula (\ref{eq:77}). We have
\begin{equation}
V_0=(\frac ab,-\alpha_0), \qquad V_3 = \big( -\frac ab, \alpha_0 \big). \label{eq:97}
\end{equation}
Then $V_1$ has the same $y$ as $V_0$ (since $\partial_y$ is a normal to $E_0$), with $x_1(z_1)=\frac1{f(z_1)}=\frac1{b+a+m+n}$. Therefore from $x(z_1)=-\frac ab(-1+F_1x_1(z_1))$ one calculates
\begin{equation}
  \label{eq:98}
  V_1 = \big( \frac{b+m-n}b , -\alpha_0 \big).
\end{equation}
Similarly $V_2$ has the same $x$ coordinate as $V_1$ (since $\partial_x$ is a normal to $E_2$) and one obtains
\begin{equation}
  \label{eq:99}
  V_2 = \big( \frac{b+m-n}b , \alpha_0 \frac{b+m-a-3n}{b+m-a+n} \big) .
\end{equation}
A normal to the last edge $E_3$ is $v_2=-v_0+\ell v_1=-v_0+\sign(n) v_1$, so the form of the polytope depends on the sign of $n$. The polytopes are represented on figure \ref{fig:polytopesKTb}, for the same values of the parameters, except for the sign of $n$ (note that this exchanges $\alpha_0$ and $\alpha_2$), with the angles for each edge.
    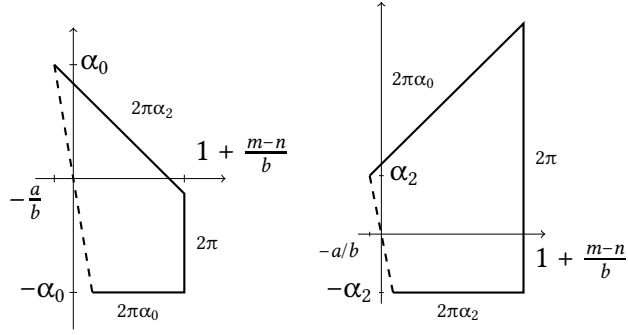
\begin{figure}[h]
      \centering
      \begin{tikzpicture}
        \draw[very thin,->] (-0.5,0) -- (2,0) ;
        \draw[very thin,->] (0,-2) -- (0,2) ;
        \draw[very thin] (-0.05,1.51) node[right]{$\alpha_0$} -- (0.05,1.51) ;
        \draw[very thin] (-0.05,-1.51) -- (0.05,-1.51)  node[left]{$-\alpha_0$};
        \draw[very thin] (-0.25,-0.05) -- (-0.25,0.05) node[below left]{$-\frac ab$};
        \draw[very thin] (1.46,0.05) -- (1.46,-0.05) node[above right]{$1+\frac{m-n}b$} ;
        \draw[thick,dashed] (-0.25,1.51) -- (0.25,-1.51);
        \draw[thick] (0.25,-1.51) -- node[below]{$\scriptstyle 2\pi\alpha_0$} (1.46,-1.51) -- node[right]{$\scriptstyle 2\pi$} (1.46,-0.2) -- node[above right]{$\scriptstyle 2\pi\alpha_2$} (-0.25,1.51);
      \end{tikzpicture}
      \begin{tikzpicture}[scale=0.615]
        \draw[very thin,->] (-0.5,0) -- (3.5,0) ;
        \draw[very thin,->] (0,-1.5) -- (0,5) ;
        \draw[very thin] (-0.05,1.26) node[right]{$\alpha_2$} -- (0.05,1.26) ;
        \draw[very thin] (-0.05,-1.26) -- (0.05,-1.26)  node[left]{$-\alpha_2$};
        \draw[very thin] (-0.25,-0.05) -- (-0.25,0.05) node[below left]{$\scriptstyle -a/b$};
        \draw[very thin] (3.04,0) node[below right]{$1+\frac{m-n}b$} ;
        \draw[thick,dashed] (-0.25,1.26) -- (0.25,-1.26);
        \draw[thick] (0.25,-1.26) -- node[below]{$\scriptstyle 2\pi\alpha_2$} (3.04,-1.26) -- node[right]{$\scriptstyle 2\pi$} (3.04,4.54) -- node[above left]{$\scriptstyle 2\pi\alpha_0$} (-0.25,1.26);
      \end{tikzpicture}
      \caption{Kerr-Taub-bolt polytopes with the two orientations: $n>0$ and $n<0$}
      \label{fig:polytopesKTb}
    \end{figure}

\subsubsection*{The case $a=0$, $n\neq 0$.} It is obtained at the limit $a\rightarrow0$. In that case $\alpha_0=\alpha_2$, so by renormalizing the angular variables by $\frac1{\alpha_0}$ we can interpret the solutions as being smooth along $E_0$ and $E_2$, and with conical singularity of angle $\frac{2\pi}{\alpha_0}$ along the 2-sphere corresponding to $E_1$. Writing $m=\lambda|n|$ we obtain $b=\sqrt{\lambda^2-1}|n|$ and an angle
\begin{equation}
  \label{eq:100}
  \frac{2\pi}{\alpha_0} = \frac{4\pi}{\sqrt{\lambda^2-1}+\lambda}
\end{equation}
which varies from $4\pi$ to $0$ when $\lambda$ varies from $1$ to $+\infty$, and takes the value $2\pi$ for $m=\frac54|n|$: this is the smooth Taub-bolt metric (with both orientations, depending on the sign of $n$). The limit $\lambda=1$ (with $b=0$ so the edge $E_1$ is removed) is the Taub-NUT metric, with the corresponding $S^2$ contracted to a point. One can check that a suitable rescaling when $\lambda\rightarrow1$ converges to a ramified double cover of the Eguchi-Hanson metric (hence the limit angle $4\pi$ around the 2-sphere). So one can consider the metrics for $\lambda>1$ small as obtained by blowing up the Taub-NUT metric at the fixed point and grafting a ramified double cover of the Eguchi-Hanson metric. This point of view will be explored systematically in section \ref{sec:blowing-up-cone}.

    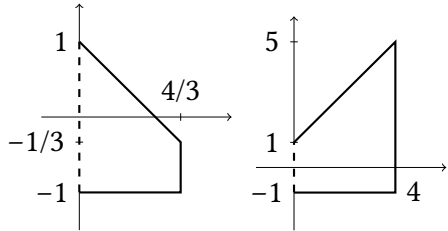
\begin{figure}[h]
      \centering
      \begin{tikzpicture}
        \draw[very thin,->] (-0.5,0) -- (2,0) ;
        \draw[very thin,->] (0,1) node[left]{$1$} -- (0,1.5) ; \draw[very thin] (0,-1.5) -- (0,-1) node[left]{$-1$};
        \draw[very thin] (1.333,-0.05) -- (1.333,0.05) node[above]{$4/3$} ;
        \draw[very thin] (-0.05,-0.333) node[left]{$-1/3$} -- (0.05,-0.333);
        \draw[thick,dashed] (0,-1) -- (0,1);
        \draw[thick] (0,-1) -- (1.333,-1) -- (1.333,-0.333) -- (0,1);
      \end{tikzpicture}
      \begin{tikzpicture}
        \draw[very thin,->] (-0.5,0) -- (2,0) ;
        \draw[very thin,->] (0,0.333) -- (0,2) ; \draw[very thin] (0,-0.833) -- (0,-0.333) node[left]{$-1$};
        \draw[very thin] (1.333,-0.05) node[below right]{$4$} -- (1.333,0.05) ;
        \draw[very thin] (-0.05,1.666) node[left]{$5$} -- (0.05,1.666);
        \draw[very thin] (-0.05,0.333) node[left]{$1$} -- (0.05,0.333);
        \draw[thick,dashed] (0,-0.333) -- (0,0.333);
        \draw[thick] (0,-0.333) -- (1.333,-0.333) -- (1.333,1.666) -- (0,0.333);
      \end{tikzpicture}
      \caption{Taub-bolt polytopes (two orientations)}
      \label{fig:polytopesTb}
    \end{figure}

\subsubsection*{The case $n=0$.} Via formula (\ref{eq:87}) this condition corresponds exactly to $F_2=F_0$, which gives $v_2=-v_0$ and therefore $\ell=0$. The corresponding solutions are smooth and we obtain the Kerr family. We let the reader check that we obtain the polytopes on figure \ref{fig:polytopesTNSK}. The parameter $p=\frac ab$ varies in the interval $(-1,1)$, with a symmetry between $p$ and $-p$. For $p=0$ one recovers the Schwarzschild metric; when $p\rightarrow 1$ the Kerr metric degenerates to the Taub-NUT metric.
    \begin{figure}[h]
      \centering
      \begin{tikzpicture}
        \draw[very thin,->] (-0.5,0) -- (2.5,0) ;
        \draw[very thin,->] (0,1) node[left]{$1$} -- (0,1.5) ; \draw[very thin] (0,-1.5) -- (0,-1) ;
        \draw[very thin,color=blue] (0.6,-0.05) node[below]{$p$} -- (0.6,0.05);
        \draw[thick,dashed,color=red] (0,-1) -- (0,1);
        \draw[thick,color=red] (0,-1) -- (2,-1) -- node[below right]{$2$} (2,1) -- (0,1);
        \draw[very thick,dashed] (1,1) -- (-1,-1);
        \draw[very thick] (-1,-1) -- node[below left]{$-1$} (1,-1) -- node[below right]{$1$} (1,1);
        \draw[dashed, thick,color=blue] (-0.6,-1) -- (0.6,1);
        \draw[thick,color=blue] (0.6,1) -- (1.8,1) --  (1.8,-1) node[below]{$\scriptstyle 1+\sqrt{1-p^2}$}-- (-0.6,-1);
      \end{tikzpicture}

      % \begin{tikzpicture}[baseline]
      %   \draw[very thin,->] (-1.5,0) -- (1.5,0) ;
      %   \draw[very thin,->] (0,-1.5) -- (0,1.5) ;
      %   \draw[thick,dashed] (1,1) -- (-1,-1);
      %   \draw[thick] (-1,-1) -- node[below left]{$-1$} (1,-1) -- node[below right]{$1$} (1,1);
      %   \draw[very thin] (-0.05,1) -- node[left]{$1$} (0.05,1);
      %   \draw[very thin] (-1,-0.05) -- node[below]{$-1$} (-1,0.05);
      % \end{tikzpicture}
      % \begin{tikzpicture}[baseline]
      %   \draw[very thin,->] (-0.5,0) -- (2.5,0) ;
      %   \draw[very thin,->] (0,1) node[left]{$1$} -- (0,1.5) ; \draw[very thin] (0,-1.5) -- (0,-1) node[left]{$-1$};
      %   \draw[thick,dashed] (0,-1) -- (0,1);
      %   \draw[thick] (0,-1) -- (2,-1) -- node[below right]{$2$} (2,1) -- (0,1);
      % \end{tikzpicture}
      % \begin{tikzpicture}[baseline]
      %   \draw[very thin,->] (-1,0) -- (2.4,0);
      %   \draw[very thin,->] (0,-1.5) -- (0,1.5);
      %   \draw[very thin] (-0.05,1) node[left]{$1$} -- (0.05,1);
      %   \draw (0,-1) node[below left]{$-1$};
      %   \draw[very thin] (0.5,-0.05) node[below]{$p$} -- (0.5,0.05);
      %   \draw[dashed, thick] (-0.5,-1) -- (0.5,1);
      %   \draw[thick] (0.5,1) -- (1.866,1) -- node[below right]{$1+\sqrt{1-p^2}$} (1.866,-1) -- (-0.5,-1);
      % \end{tikzpicture}
      \caption{Taub-NUT, Schwarzschild (red) and Kerr (blue) polytopes}
      \label{fig:polytopesTNSK}
    \end{figure}
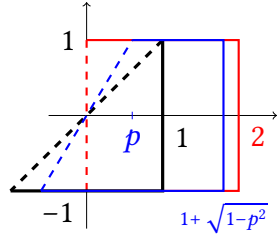

\section{The classification of smooth Hermitian ALF instantons}
\label{sec:class-smooth-herm}

\subsection{The constraints}
\label{sec:constraints}

Smoothness imposes strong constraints on the convex piecewise linear function $f$ from section \ref{sec:regularity-criterion}. Suppose we have three consecutive segments of $f$, say with slopes $f'_{j-1} < f'_j < f'_{j+1}$, then the smoothness at the vertex corresponding to $z_j$ is expressed by saying that the two basis $(v_{j-1},v_j)$ and $(v_j,v_{j+1})$ of vectors defined by (\ref{eq:82}) generate the same lattice, that is satisfy
\begin{equation}
  \label{eq:101}
  v_{j-1} + \varepsilon_j v_{j+1} = \ell_j v_j, \qquad \varepsilon_j = \pm 1, \ell_j \in \Bbb{Z}.
\end{equation}
We get the system
\begin{equation}
  \label{eq:102}
  \begin{split}
    f'_{j-1} + \varepsilon_j f'_{j+1} &= \ell_j f_j, \\
    f'_{j-1}F_{j-1} + \varepsilon_j f'_{j+1}F_{j+1} &= \ell_j f'_j F_j.
  \end{split}
\end{equation}
Using the first equation, the second equation becomes
\begin{equation}
  \label{eq:103}
  f'_{j-1} (F_j-F_{j-1}) = \varepsilon_j f'_{j+1} (F_{j+1}-F_j).
\end{equation}
Using (\ref{eq:87}) we obtain
\begin{equation}
  \label{eq:104}
  f_{j+1}^2 = \varepsilon_j f_j^2 \frac{f'_j-f'_{j-1}}{f'_{j+1}-f'_j}.
\end{equation}
Since $f'_{j-1}<f'_j<f'_{j+1}$ we deduce that $\varepsilon_j=+1$ and therefore the system (\ref{eq:102}) is now reduced to the equations
\begin{equation}
  \label{eq:105}
  \begin{split}
    f'_{j-1} + f'_{j+1} &= \ell_j f_j, \\
    f_{j+1}^2 &= f_j^2 \frac{f'_j-f'_{j-1}}{f'_{j+1}-f'_j}.
  \end{split}
\end{equation}
Using (\ref{eq:82}) in the case where $f'_{j-1}$ or $f'_{j+1}$ vanishes, the calculation leads to the same conclusion: $\varepsilon_j=+1$ and one has (\ref{eq:105}). If finally $f'_j=0$, then the calculation gives $\varepsilon_j=+1$ and
\begin{equation}
  \label{eq:106}
  \begin{split}
    f'_{j+1} &= - f'_{j-1}, \\
    z_{j+1}-z_j &= \frac{2+\ell_j}2 \frac{f_j}{f'_{j+1}}
  \end{split}
\end{equation}

\begin{lem}\label{lem:n3}
  Suppose that we have a smooth Hermitian ALF instanton generated by a convex piecewise linear function $f$ on $\Bbb{R}$ as before, with $r+1$ different slopes $-1=f'_0<f'_1<\cdots<f'_r=1$. Then in three successive slopes $f'_{j-1}<f'_j<f'_{j+1}$ we must have $f'_{j-1}<0$ and $f'_{j+1}>0$. In particular $r\leq3$.
\end{lem}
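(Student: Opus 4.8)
The plan is to prove the sharper pointwise statement $f'_{j-1}<0<f'_{j+1}$ at every interior vertex $z_j$ ($1\le j\le r-1$) and then read off $r\le 3$. By the reflection symmetry of Remark \ref{rem:homogeneity} (take $a=-1$, $b=0$): if $f$ generates a smooth instanton, then so does $z\mapsto f(-z)$, whose slope sequence is the reversed negation $(-f'_r,\dots,-f'_0)=(-1,-f'_{r-1},\dots,-f'_1,1)$, and the metric is merely homothetic so smoothness is preserved. Since this operation interchanges the two desired inequalities, it suffices to prove $f'_{j+1}>0$ for every interior $j$; the inequality $f'_{j-1}<0$ then follows by applying this to $f(-\cdot)$.

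Throughout write $g_i=f'_i-f'_{i-1}>0$ for the slope jump at $z_i$, and $f_i=f(z_i)>0$. I would record two consequences of smoothness. First, projecting the normal identity (\ref{eq:101}) (with $\varepsilon_j=+1$, already established) onto the $\partial_{x_3}$-direction and using (\ref{eq:82}) with $\alpha_i=1$, so that the $\partial_{x_3}$-component of $v_i$ equals $f'_i$ in all cases (including $f'_i=0$), gives the scalar relation $f'_{j-1}+f'_{j+1}=\ell_j f'_j$ with $\ell_j\in\Bbb{Z}$; subtracting $2f'_j$ yields
\[
  g_j-g_{j+1}=(2-\ell_j)\,f'_j .
\]
Second, the second relation in (\ref{eq:105}), valid whenever the middle slope $f'_j$ is nonzero, reads $f_{j+1}^2\,g_{j+1}=f_j^2\,g_j$. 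Combined with convexity, which gives $f_{j+1}-f_j=f'_j(z_{j+1}-z_j)$ with $z_{j+1}>z_j$ and hence $\sign(f_{j+1}-f_j)=\sign(f'_j)$, the identity $f_{j+1}^2/f_j^2=g_j/g_{j+1}$ produces the sign rule $\sign(g_j-g_{j+1})=\sign(f'_j)$ at every vertex with $f'_j\neq0$.

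Now suppose, for contradiction, that $f'_{j+1}\le 0$ at some interior $j$. Then $f'_{j-1}<f'_j<f'_{j+1}\le 0$, so $f'_j<0$, and $z_{j+1}$ is still a finite vertex (since $f'_r=1\neq f'_{j+1}$), so $f_{j+1}$ is defined. The sign rule at $j$ gives $g_j-g_{j+1}<0$, and the displayed identity then forces $(2-\ell_j)f'_j<0$, i.e. $\ell_j<2$; as $\ell_j\in\Bbb{Z}$ this upgrades to $\ell_j\le 1$. Because $f'_j<0$ we have $(\ell_j-1)f'_j\ge 0$, hence $\ell_j f'_j\ge f'_j$, and from $f'_{j+1}=\ell_j f'_j-f'_{j-1}$ I obtain $f'_{j+1}\ge f'_j-f'_{j-1}=g_j>0$, contradicting $f'_{j+1}\le 0$. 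Thus $f'_{j+1}>0$, and by the reflection symmetry $f'_{j-1}<0$; the case $f'_j=0$ needs no argument, since then $f'_{j+1}>f'_j=0$ and $f'_{j-1}<0$ trivially.

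Finally, the sign statement applied at all interior vertices gives $f'_0,\dots,f'_{r-2}<0$ and $f'_2,\dots,f'_r>0$; if $r\ge 4$ then $f'_2$ would have to be both positive and negative (the slopes being increasing), which is absurd, so $r\le 3$. The main obstacle is the second step together with its use in the third: one must combine the second relation of (\ref{eq:105}) with convexity into the sign rule, and it is precisely the integrality of $\ell_j$—turning $\ell_j<2$ into $\ell_j\le 1$—that eliminates the configurations with three consecutive slopes of one sign, which the linear relations alone would permit.
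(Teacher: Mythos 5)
Your proof is correct and follows essentially the same route as the paper: both rest on the two scalar relations of (\ref{eq:105}), the monotonicity $\sign(f_{j+1}-f_j)=\sign(f'_j)$ coming from positivity and convexity of $f$, and the integrality of $\ell_j$ to exclude three consecutive slopes of one sign. Your only departures are cosmetic: you formalize the paper's implicit ``suppose for example'' symmetry via the reflection $z\mapsto -z$ of Remark \ref{rem:homogeneity}, and you derive $\ell_j\le 1$ from the second relation and substitute into the first, where the paper instead pinches $\ell_j$ between the bounds $\ell_j\ge 2$ and $\ell_j<2$.
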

\begin{proof}
  Indeed suppose for example that $f'_{j-1}<f'_j<f'_{j+1}\leq0$. Then the first equation in  system (\ref{eq:105}) implies that $\ell_j\geq2$. But $f_{j+1}<f_j$ so the second equation (\ref{eq:105}) tells us that $f'_{j+1}-f'_j>f'_j-f'_{j-1}$, that is $f'_{j-1}+f'_{j+1}=\ell_jf'_j>2f'_j$ which is a contradiction.
\end{proof}

\subsection{The case $r=3$}
\label{sec:case-n=3}

Suppose now that we have a smooth instanton with $n=3$, so by lemma \ref{lem:n3} we have $-1=f'_0 < f'_1 < 0 < f'_2 < f'_3=1$. We will denote $p=-f'_1$ and $q=f'_2$ so that both are inside $(0,1)$. This is illustrated in figure \ref{fig:ct-f}.
\begin{figure}[h]
  \centering
  \begin{tikzpicture}
    \draw[very thin, ->] (-2,0) -- (2,0);
    \draw[very thin, ->] (0,-0.5) -- (0,2.5);
    \draw[thick] (-1.9,1.9) -- (-0.67,0.67) node[left]{$f_1$} -- node[above]{$\scriptstyle -p$} (0,0.5) node[right]{$f_2$} -- node[above]{$\scriptstyle q$} (1,1.1) node[right]{$f_3$} -- (1.9,2);
    % \draw[very thin] (1,-0.05) -- (1,0.05) node[below]{$b$};
    % \draw[very thin] (-1,-0.05) -- (-1,0.05) node[below]{$-b$};
    % \draw[very thin] (-0.05,1) node[above right]{$\scriptstyle b+a+m+n$} -- (0.05,1);
    % \draw[very thin] (-0.05,0.5) -- (0.05,0.5) node[left]{$\scriptstyle b-a+m+n$};
  \end{tikzpicture}
  \caption{The function $f(z)$ for the Chen-Teo metrics}
  \label{fig:ct-f}
\end{figure}
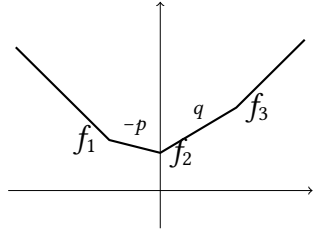
Applying twice (\ref{eq:105}) we obtain the equations
\begin{equation}\label{eq:107}
  \begin{split}
    -1+q = - \ell_1 p, \qquad  -p + 1 = \ell_2 q, \\
    (1-p) f_1^2 = (p+q) f_2^2 = (1-q) f_3^2 .
  \end{split}
\end{equation}
From the first equation we see that $\ell_1, \ell_2>0$. On the other hand, since $f_2<f_1$, by the second equation we have $1-p<p+q$ which gives $2p>1-q=\ell_1 p$. Finally we get $\ell_1=1$, and similarly $\ell_2=1$. In particular (\ref{eq:101}) now becomes $v_0+v_2=v_1$ and $v_1+v_3=v_2$, which imply $v_3=-v_0$, that is these are AF instantons.

Now the system (\ref{eq:107}) reduces to $p+q=1$ and
\begin{equation}
  \label{eq:108}
  f_1 = \frac{f_2}{\sqrt q}, \qquad f_3 = \frac{f_2}{\sqrt p}.
\end{equation}
This completely determines $f$ up to the transformation of $f$ into $\frac1a f(az+b)$. Therefore we have constructed a 1-dimensional family, indexed by $p\in(0,1)$, which is exactly the Chen-Teo family. We have proved:

\begin{thm}
  A smooth Hermitian, non hyperKähler, ALF gravitational instanton, is either:
  \begin{itemize}
  \item the Taub-NUT metric (with the orientation opposed to the hyperKähler orientation)
  \item the 1-parameter family of Kerr instantons
  \item the Taub-bolt metric (with respect to both orientations)
  \item the 1-parameter family of Chen-Teo instantons.
  \end{itemize}\qed
\end{thm}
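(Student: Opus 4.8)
The plan is to obtain this theorem as a synthesis of the structural results already established, followed by a finite case analysis on the number of slopes $r$. By Theorem \ref{thm:classification}, every toric Hermitian Ricci flat ALF metric with $W^+\neq0$ is produced, via the Tod ansatz, from a convex piecewise linear function $f$ whose slopes increase from $-1$ to $+1$, subject to the Delzant condition of that theorem; a \emph{smooth} instanton is the special case in which all cone angles satisfy $\alpha_i=1$. So the entire problem reduces to deciding which such $f$ yield a genuinely smooth compactification. The first step is to invoke Lemma \ref{lem:n3}, which already forces $r\leq 3$; since the slopes must run from $-1$ to $+1$ we also have $r\geq 1$, leaving exactly the three cases $r=1,2,3$, each of which I would dispatch with $\alpha_i=1$ imposed throughout.

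For $r=1$, the only admissible function, up to the scaling and affine freedom of Remark \ref{rem:homogeneity}, is $f(z)=2n+|z|$; by section \ref{sec:kerr-taub-bolt} its generating function is $U_n$ and the resulting metric is Taub-NUT, which is smooth, giving the first item. For $r=2$ I would rely on the explicit computation of section \ref{sec:basic-examples-nleq}: the normals $v_0,v_1,v_2$ and the Delzant relation (\ref{eq:91}) force the cone angles to take the specific values (\ref{eq:95}), which are in general $\neq1$, so smoothness is cut out by demanding $\alpha_0=\alpha_1=\alpha_2=1$. This isolates precisely two sub-families: the case $n=0$, where $v_2=-v_0$ (so $\ell=0$) and the metric is automatically smooth, producing the one-parameter Kerr family indexed by $p=\frac ab\in(-1,1)$; and the case $a=0$, where $\alpha_0=\alpha_2$ and, after renormalizing, the angle along the middle edge is $2\pi/\alpha_0$ given by (\ref{eq:100}), equal to $2\pi$ exactly when $m=\frac54|n|$, which yields the Taub-bolt metric in both orientations. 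All other parameter values give honest conical singularities and are therefore excluded from the smooth list.

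For $r=3$ I would quote section \ref{sec:case-n=3}: the two instances of the smoothness system (\ref{eq:105}) force $\ell_1=\ell_2=1$ together with $p+q=1$, and the specialization (\ref{eq:107}) then pins $f$ down to a single one-parameter family (modulo Remark \ref{rem:homogeneity}), which is identified with the Chen-Teo instantons and gives the last item. Collecting the outcomes of $r=1,2,3$ yields exactly the four alternatives of the theorem.

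The hard part is not conceptual but organizational, and it lives in the $r=2$ case: one must carefully track which choices of the four parameters $(a,b,m,n)$ (constrained by $b^2=a^2+m^2-n^2$, with $|n|\leq m$) realize all $\alpha_i=1$, and confirm that the generic member of the family is genuinely conical rather than smooth, so that the smooth classification really collapses to the Kerr line and the two Taub-bolt points. A secondary verification is that each surviving metric is indeed non-Kähler: since the whole construction proceeds in the regime $W^+\neq0$ underlying Proposition \ref{prop:twistor-2-forms} and Theorem \ref{th:derdzinski}, this is essentially built in, but one should confirm that the four listed metrics are precisely the non-hyperKähler ones and are correctly separated from the Kähler (multi-Taub-NUT) solutions.
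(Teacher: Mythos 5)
Your proposal is correct and follows essentially the same route as the paper: the paper's proof is exactly the synthesis of Lemma \ref{lem:n3} (forcing $r\leq 3$) with the case analyses already carried out — $r=1$ giving Taub-NUT (section \ref{sec:kerr-taub-bolt}), $r=2$ giving Kerr ($n=0$) and Taub-bolt ($a=0$, $m=\tfrac54|n|$) via the angle formulas (\ref{eq:95}) and (\ref{eq:100}) of section \ref{sec:basic-examples-nleq}, and $r=3$ giving the Chen-Teo family via (\ref{eq:105})--(\ref{eq:108}). Your flagged ``hard part'' in the $r=2$ case is handled exactly as you suggest: imposing $\alpha_0=\alpha_2=1$ in (\ref{eq:95}) forces $an=0$, collapsing the family to the Kerr line and the Taub-bolt points.
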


Remark that this is a classification theorem, we do not need to compare our metrics with the explicit formulas for the Chen-Teo instantons: by uniqueness our 1-dimensional family has to be the Chen-Teo family. In particular we have given an independent construction of the Chen-Teo instantons. Nevertheless it is possible to compare directly our data (the function $f$) with the explicit formulas of Chen-Teo, but we do not need it here.

We can calculate the polytopes for the 1-parameter family of Chen-Teo instantons. We fix $p,q\in(0,1)$. We can choose $f_2=pq$ so that $f_1=p\sqrt q$ and $f_3=q\sqrt p$. Then we take $z_2=0$ and therefore $z_1=-\frac{f_1-f_2}p=q-\sqrt q$, similarly $z_3=\sqrt p-p$. We put $f$ in the form (\ref{eq:70}):
\begin{equation}
  \label{eq:109}
  \begin{split}
    &f(z) = \frac 12 \left( k + q | z + \sqrt q - q | + |z| + p | z - \sqrt p + p| \right), \\
    &\text{with }
    k = 1 - p^{\frac32}- q^{\frac32} .
  \end{split}
\end{equation}
We choose $F_0=0$ and therefore $F_1=\frac2k p^2q (-\frac1p+1)=-\frac2k pq^2$. An integral basis of the lattice is $(v_1=-p(\partial_{x_3}+F_1\partial_t),-v_0=\partial_{x_3})$ which gives coordinates
\begin{equation}
x=-p(y+F_1x_1),\quad y=\mu+\frac1k(p^{\frac32}-p^2-q^{\frac32}+q^2),\label{eq:110}
\end{equation}
where the constant added to $\mu$ in $y$ is chosen so that $y|_{E_\infty}$ varies in $(-1,1)$, see the definition of $k$ in (\ref{eq:86}). This is chosen so that the first edge $E_0$ is horizontal and the second one $E_1$ is vertical. The form of the polytope is now given by its normals to the edges: $v_0$, $v_1$, $v_2=v_1-v_0$, $v_3=v_2-v_1=-v_0$. Using the same method as in section \ref{sec:basic-examples-nleq}, one can calculate the vertices $V_i=(x(z_i),y(z_i))$ of the polytope (with $z_0=-\infty$ and $z_4=+\infty$). Skipping the details of the calculation, we get:
\begin{align}
    \notag V_0 &= (p,-1), & V_1 &= (p(1+\frac2k q^{\frac32}), -1),\\
    \label{eq:111} V_2&= \big(p(1+\frac2k q^{\frac32}), -1-\frac2k(q^{\frac32}-q)\big),\\
    V_3&=(-p(1-\frac2k \sqrt{p}q), 1), &
    \notag V_4&=(-p,1).
\end{align}
There is a symetry: the polytope for $(q,p)$ is obtained from that for $(p,q)$ by the integral transformation
\begin{equation}
(x,y)\mapsto (x+y,-y).\label{eq:112}
\end{equation}
This is somehow similar to the Kerr family with the symmetry between the parameter $a$ and $-a$, with $a=0$ begin the Schwarzschild metric. In our case, the polytope which admits a symmetry is obtained for $p=q=\frac12$, and has vertices
\begin{equation}
  \label{eq:113}
  (\frac12,-1), \quad (1+\frac1{\sqrt 2},-1), \quad (1+\frac1{\sqrt 2},0), \quad (\frac1{\sqrt 2},1), \quad (-\frac12,1).
\end{equation}
We represent on figure \ref{PolytopesCT} the symmetric polytope (in red), then a polytope in the family obtained for $p=0.1$ (in blue), and (dashed) the limit polytope when $p\rightarrow0$ which is the polytope of the Taub-bolt metric. Of course the picture for $p\geq\frac12$ is obtained by the symmetry (\ref{eq:112}).
  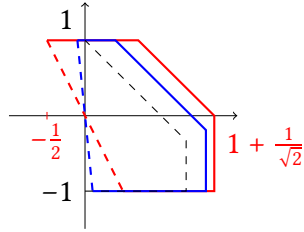
\begin{figure}[h]
    \centering
    \begin{tikzpicture}
      \draw[very thin,->] (-1,0) -- (2,0) ;
      \draw[very thin,->] (0,-1.5) -- (0,1.5) ;
      \draw[very thin,color=red] (-0.5,-0.05) -- (-0.5,0.05) node[below]{$-\frac12$} ;
      \draw (0,1)  node[above left]{$1$} ;
      \draw[thick,color=red] (0.5,-1) -- (1.7,-1) -- (1.7,0) node[below right]{$1+\tfrac1{\sqrt 2}$} -- (0.7,1) -- (-0.5,1) ;
      \draw[thick,dashed,color=red] (-0.5,1) -- (0.5,-1) ;
      \draw[thick,color=blue] (0.1,-1) -- (1.59,-1) -- (1.59,-0.19) -- (0.4,1) -- (-0.1,1) ;
      \draw[thick,dashed,color=blue] (-0.1,1) -- (0.1,-1) ;
      \draw[thin, dashed] (0,-1) node[left]{$-1$} -- (1.33,-1) -- (1.33,-0.33) -- (0,1);
    \end{tikzpicture}
    \caption{Polytopes of the Chen-Teo instantons}
    \label{PolytopesCT}
  \end{figure}

\begin{rem}
  From the data of a polytope, it is possible to calculate abstractly (without solving the extremal metric problem) the scalar curvature (we need it to vanish on the edge at infinity) and the Calabi functional of the extremal Kähler metric. Bach flat metrics appear as critical points of the Calabi functional when one varies the edges of the polytope without changing their normals, see \cite{CheLebWeb08}. (In our problem the edges come with a measure which is the integral measure multiplied by the angle $\alpha$; the Poincaré behaviour at infinity is obtained by taking a measure zero on the edge at infinity). So the polytopes which generate an instanton can a priori be characterized abstractly by algebraic equations, but these seem impossible to solve. Conversely it is possible to check that a given polytope, with given weights on the edges, can generate an instanton. We checked this on the Chen-Teo polytopes using Maple.
\end{rem}

\begin{rem}\label{rem:full-CT}
  We have not tried here to recover the whole 5-parameter family of Chen and Teo in \cite{CheTeo15}, which amounts for us to 4 parameters since we do not include scale. But one can remark that our piecewise linear functions with 3 singular points have exactly the same number of parameters: we can fix $f_2=f(0)$, then there remain 4 free parameters, the two slopes $p_1$, $p_2$ and the two values $f_1$, $f_3$. So it looks likely that this gives the full Chen-Teo family. Using our methods it is then possible to say which ones are conical (probably a 3-parameter subfamily, since the three cone angles give two additional parameters).
\end{rem}

\section{Blowing up and cone angles}
\label{sec:blowing-up-cone}

In this section we prove theorem \ref{thm:B} by describing a procedure which enables to construct new solutions from a given solution. Geometrically it is a blowing up of a fixed point of the torus, but we will see that there is a simple interpretation in terms of the piecewise linear function.

So suppose that we have a Hermitian, Ricci flat, ALF instanton, generated by a function $f(z)$ as before. We suppose that the instanton is defined on a smooth manifold but may have conical singularities along the fixed point sets. We shall describe a procedure to add a new segment to the function $f$. This is illustrated in figure \ref{fig:blowing_up}, where we want to add a segment with slope $p$ between two segments with slopes $p_j$ and $p_{j+1}$. At the end the procedure will be slightly different but it is useful to start with this idea.

\begin{figure}[h]
  \centering
  \begin{tikzpicture}
    \draw[very thin,->] (-2.5,0) -- (3.5,0) ;
    \draw (-2,1.5) node[left]{$f(z)$} -- (0,0.5) -- node[below]{$p_{j+1}$} (3,2) ;
    \draw (-1.5,1.25) node[below]{$p_j$} ; 
    \draw[very thick,dashed,color=blue] (-1,1) -- node[above]{$p$} (0.6,0.8) ;
    \draw[very thin] (-2,-0.05) -- (-2,0.05) node[below]{$z_j$} ;
    \draw[very thin] (0,-0.05) -- (0,0.05) node[below]{$z_{j+1}$} ;
    \draw[very thin] (3,-0.05) -- (3,0.05) node[below]{$z_{j+2}$} ;
  \end{tikzpicture}
  \caption{Blowing up}
  \label{fig:blowing_up}
\end{figure}
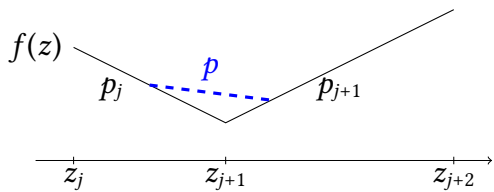

We denote $f_i=f(z_i)$, the slope of each segment is $p_i$ and a normal vector to the corresponding edge $E_i$ of the polytope is $v_i=p_i(\partial_{x_3}+F_i\partial_t)$ (for simplicity we suppose that all slopes are nonzero but the proof below can be adapted to this case). We have a conical singularity of angle $2\pi\alpha_i$ around the corresponding sphere, and from section \ref{sec:regularity-criterion} we know that since the manifold is smooth we have the relation $\alpha_{i-1}v_{i-1} + \alpha_{i+1}v_{i+1} = \ell_i \alpha_i v_i$ for some integers $\ell_i$ (the sign in the equation is $+$ because of the convexity of the polytope). We now fix a given $j$ and we want to add a new normal $v=p(\partial_{x_3}+F\partial_t)$  and an angle $\alpha$. So we have
\begin{equation}\label{eq:114} 
  \begin{split}
    \alpha_{j-1}v_{j-1} + \alpha_{j+1}v_{j+1} &= \ell_j \alpha_j v_j   \\
    \alpha_j v_j + \alpha_{j+2} v_{j+2} &= \ell_{j+1} \alpha_{j+1} v_{j+1} 
  \end{split}
\end{equation}
To keep the smoothness we want the new data to satisfy:
\begin{equation}\label{eq:115}
  \begin{split}
    \alpha_{j-1}v_{j-1} + \alpha v &= (\ell_j+1) \alpha_j v_j \\
    \alpha_j v_j + \alpha_{j+1} v_{j+1} &= \alpha v \\
    \alpha v + \alpha_{j+2} v_{j+2} &= (\ell_{j+1}+1) \alpha_{j+1}v_{j+1}
  \end{split}
\end{equation}
The choice of the integer constants in the RHSs is justified by the fact that the first equation in (\ref{eq:114}) follows from the two first equations in (\ref{eq:115}), and the second equation follows from the two last equations in (\ref{eq:115}). In particular, given the system (\ref{eq:114}) our more complicated system (\ref{eq:115}) is equivalent to the single middle equation, that is to the system
\begin{equation}
  \label{eq:116}
  \begin{split}
    \alpha_j p_j + \alpha_{j+1} p_{j+1} &= \alpha p, \\
    \alpha_j p_j (F_j-F) + \alpha_{j+1}p_{j+1} (F_{j+1}-F) &= 0.
  \end{split}
\end{equation}
We deduce the values of $p$ and $F$ in terms of $\alpha$:
\begin{equation}
  \label{eq:117}
  p = \frac{\alpha_j p_j + \alpha_{j+1} p_{j+1}}\alpha, \quad
  F = \frac{\alpha_jp_jF_j+\alpha_{j+1}p_{j+1} F_{j+1}}{\alpha_j p_j + \alpha_{j+1} p_{j+1}}.
\end{equation}
If $\alpha_j p_j + \alpha_{j+1} p_{j+1}=0$ then $p=0$ and we know that $F$ is not relevant. Here we suppose $p\neq 0$ (one can obtain the case $p=0$ either by using (\ref{eq:88}) or by taking a limit $p\rightarrow0$). Let us call $f_-$ and $f_+$ the values of $f$ at the boundary of the new segment. By (\ref{eq:87}) we must have
\begin{equation}
  \label{eq:118}
  F-F_j = \frac2k f_-^2 \big(\frac1p-\frac1{p_j}\big), \quad
  F_{j+1}-F = \frac2k f_+^2 \big(\frac1{p_{j+1}}-\frac1p\big).
\end{equation}
Writing $F_{j+1}-F_j=\frac2k f_{j+1}^2(\frac1{p_{j+1}}-\frac1{p_j})$, we finally deduce from (\ref{eq:117})
\begin{equation}
  \label{eq:119}
  f_-^2 = f_{j+1}^2 \frac{(p_{j+1}-p_j)\alpha_{j+1}}{\alpha_{j+1}p_{j+1}-(\alpha-\alpha_j)p_j}, \quad
  f_+^2 = f_{j+1}^2  \frac{(p_{j+1}-p_j)\alpha_j}{(\alpha-\alpha_{j+1})p_{j+1}-\alpha_jp_j}.
\end{equation}

Observe that for $\alpha=\alpha_j+\alpha_{j+1}$ the slope $p\in(p_j,p_{j+1})$ and $f_-=f_{j+1}=f_+$, so the procedure does not change anything (we added a segment of length $0$). We can rewrite (\ref{eq:119}) as
\begin{equation}\label{eq:120}
  f_{j+1}^2-f_-^2=f_{j+1}^2\frac{(\alpha_j+\alpha_{j+1}-\alpha)p_j}{\alpha(p-p_j)},\quad
  f_+^2-f_{j+1}^2=f_{j+1}^2\frac{(\alpha_j+\alpha_{j+1}-\alpha)p_{j+1}}{\alpha(p_{j+1}-p)}.
\end{equation}
Now suppose that $\alpha<\alpha_j+\alpha_{j+1}$ but is close to $\alpha_j+\alpha_{j+1}$. Then $f_{j+1}-f_-$ has the sign of $p_j$ and $f_+-f_{j+1}$ that of $p_{j+1}$. So $f_\pm=f(z_\pm)$ for some $z_-\in(z_j,z_{j+1})$ and $z_+\in(z_{j+1},z_{j+2})$, both close to $z_{j+1}$. So it looks possible to modify $f$ as in figure \ref{fig:blowing_up}, except that to introduce a segment of slope $p$ we would need $f_+-f_-=p(z_+-z_-)$ which does not follow from our formulas.

So we must do something slightly more complicated: we must modify globally the function $f$ in order to match the new segment of slope $p$. For $\alpha<\alpha_j+\alpha_{j+1}$ close enough to $\alpha_j+\alpha_{j+1}$ we have the slopes $p_0=-1< p_1< \cdots < p_j< p< p_{j+1}< \cdots < p_r=1$, the constants $F_0,...,F_j,F,F_{j+1},...$, the angles $\alpha_0,...,\alpha_j,\alpha,\alpha_{j+1},...$ and the corresponding values of the piecewise linear function $f_1,...,f_j,f_-,f_{j+1},f_+,f_{j+2},...$. This is coherent in the sense that the sign of each $f_{i+1}-f_i$ is the sign of $p_i$, as we have just checked for the new values $f_\pm$. So there exists a convex piecewise linear function $f_\alpha$ with these slopes and these values at the singular points ($f_\alpha$ is unique up to translation in the $z$ variable). From our construction the constraints of proposition \ref{prop:regularity} and the Delzant condition on the normals $\alpha_0v_0,...,\alpha_jv_j,\alpha v,\alpha_{j+1}v_{j+1},...$ are still satisfied, so we obtain a family of new instantons with the same normals and cone angles as the initial one, and one additional edge to the polytope with cone angle $2\pi\alpha$. Geometrically the addition of an edge to the polytope is a complex blowup. This finishes the proof of theorem \ref{thm:B}.

\bibliographystyle{alpha}
\bibliography{biblio,alf}

\end{document}